\numberwithin{equation}{section}
\newtheorem{theorem}{Theorem}[section]
\newtheorem{corollary}[theorem]{Corollary}
\newtheorem{lemma}[theorem]{Lemma}
\newtheorem{proposition}[theorem]{Proposition}
\theoremstyle{remark}
\newtheorem{remark}{Remark}[section]
\theoremstyle{definition}
\DeclareMathOperator{\sgn}{sgn}
\newcommand{\bra}[1]{\langle #1 \rangle}
\newcommand{\one}[1]{\mathbf{1}_{#1}}
\title[Sommerfeld condition]
{A limiting absorption principle
for the Helmholtz equation with variable coefficients}
\date{\today}    
\subjclass[2010]{%
35J05
, 57R10
, 35J15
, 34L05
}\keywords{%
Smoothing estimates%
; Helmholtz equation%
; Variable coefficients%
; Limiting absorption principle%
}
\begin{document}
\maketitle

\centerline{\scshape Federico Cacciafesta}
\medskip
{\footnotesize
 \centerline{Dipartimento di Matematica}
   \centerline{Universit\`a degli studi di Padova}
   \centerline{ Via Trieste, 63, 35131 Padova, Italia}
   \centerline{{\em email adress:} cacciafe@math.unipd.it}
   \centerline{(corresponding author)}
} 
\medskip

\centerline{\scshape Piero D'Ancona}
\medskip
{\footnotesize 
	\centerline {Dipartimento di matematica}
	\centerline {Sapienza Universit\`{a} di Roma}
	\centerline {Piazzale A.~Moro 2, 00185 Roma, Italy}
	\centerline{{\em email adress:} dancona@mat.uniroma1.it
	}
	\medskip
\centerline{\scshape Renato Luc\`{a}}
\medskip
{\footnotesize 
	\centerline {Departement Matematik und Informatik}
	\centerline {Universit\"at Basel}
	\centerline {Spiegelgasse 1, 4051 Basel}
	\centerline{{\em email adress:} renato.luca@unibas.ch
	}

\begin{abstract}
  We prove a limiting absorption principle for a
  generalized Helmholtz equation on an exterior domain
  with Dirichlet boundary conditions
  \begin{equation*}
    (L+\lambda)v=f,
    \qquad
    \lambda\in \mathbb{R}
  \end{equation*}
  under a Sommerfeld radiation condition at infinity.
  The operator $L$ is a second order elliptic operator
  with variable coefficients; the principal part is a small,
  long range perturbation of $-\Delta$, while lower order
  terms can be singular and large.

  The main tool is a sharp uniform resolvent
  estimate, which has independent applications to the
  problem of embedded eigenvalues and to 
  smoothing estimates for dispersive equations.
\end{abstract}


\section{Introduction}\label{sec:int}

The Helmholtz equation
\begin{equation}\label{eq:helm0}
  \Delta v+\kappa^{2}v=f(x),
  \qquad
  \kappa\in \mathbb{R}
\end{equation}
on an exterior domain $\Omega=\mathbb{R}^{n}\setminus \Sigma$,
is used to model the scattering
by a compact obstacle $\Sigma$ of waves
generated by a source $f(x)$.
The operator $\Delta+\kappa^{2}$
has a nontrivial kernel and to properly select solutions of 
\eqref{eq:helm0} additional conditions are needed.
It is natural to require asymptotic conditions at infinity,
and the standard one is the
\emph{Sommerfeld radiation condition}
\begin{equation}\label{eq:somm0}
  |x|^{\frac{n-1}{2}}\nabla(e^{-i \kappa|x|}v)\to0
  \quad\text{as}\quad 
  |x|\to \infty.
\end{equation}
Condition \eqref{eq:somm0} guarantees uniqueness for
\eqref{eq:helm0}, but it can be substantially relaxed as discussed
in the following.

The second part of the problem is the effective construction
of solutions;
this is usually done by taking $\kappa^{2}=\lambda+i \epsilon$
complex valued and letting $\epsilon\to0$. 
When the limit exists, one says that the
\emph{limiting absorption principle} holds.
Note that for $\kappa^{2} \not\in \mathbb{R}$
equation \eqref{eq:helm0} is the resolvent equation
$v=R(\kappa^{2})f$ for $R(z)=(z+\Delta)^{-1}$,
which is a bounded operator on $L^{2}$ if and only if 
$z\not\in \sigma(-\Delta)$. Thus the problem amounts to
estimate the resolvent operator $R(z)$
uniformly in $z\not\in \mathbb{R}$.
As a byproduct, one obtains that the resolvent operator
in the limits $\pm\Im z\to0$
extends to operators $R(\lambda\pm i0)$ which are
bounded between suitable weighted Sobolev spaces.

The Helmholtz equation with potential perturbations was
studied in 
\cite{agm},
\cite{agmhor},
where the correct functional setting for the problem was
established, and in
\cite{pertveg1} 
\cite{pertveg2},
where non decaying potentials were allowed.
More general Schr\"{o}dinger operators with electromagnetic
potentials were considered  in
\cite{barfan}
\cite{barruiveg},
\cite{barvegzub},
\cite{eidus},
\cite{fan},
\cite{ikesai},
\cite{zub1},
\cite{zub2}.
Uniform resolvent estimates in the case of variable coefficients 
were obtained in
\cite{MarzuolaMetcalfeTataru08-a},
\cite{rodtao},
\cite{Tataru08-a}
and the predecessor 
\cite{cacdanluc}
of this paper,
and estimates local in frequency for general elliptic
operators were proved in Chapter 30 of
\cite{Hormander94-b}.
We also mention the connection of resolvent estimates
with smoothing and Strichartz estimates for
the corresponding evolution equations
(exploited first in \cite{joursofsog}, \cite{Yajima90-a},
\cite{rodschlag}; see also 
\cite{DAnconaFanelli08-a}, \cite{CassanoDAncona15-a}
and the references in the papers mentioned above).

In recent years the problem of establishing sharp regularity
and decay conditions on the potentials has attracted some
attention, also in view of the applications to dispersive
equations. The critical threshold for electric potentials is
$\sim|x|^{-2}$ and for magnetic potentials $\sim|x|^{-1}$.
Uniform resolvent estimates for singular potentials of
critical decay were obtained in
\cite{BurqPlanchonStalker04-a},
\cite{FanelliVega09-a}
(see also \cite{DAnconaFanelliVega10-a}),
while the limiting absorption principle was studied in
\cite{zub1},
\cite{barvegzub}.

Our goal here is to study the interaction of singular
potentials with a nonflat metric which is a long range,
small perturbation of the euclidean metric. 
We consider the
following \emph{generalized Helmholtz equation} 
\begin{equation}\label{eq:helmh1}
  (L+\lambda+i \epsilon)v=f,
  \qquad
  \lambda,\epsilon\in \mathbb{R}
\end{equation}
where $L$ is an operator of the form
\begin{equation}\label{eq:opL}
  Lv=\nabla^b\cdot(a(x)\nabla^bv)+cv,
  \qquad
  \nabla^{b}=\nabla+ib,
\end{equation}
defined 
on the exterior $\Omega=\mathbb{R}^{n}\setminus \Sigma$
of a compact, possibly empty obstacle $\Sigma$ with $C^{1}$
boundary, in dimension $n\ge3$.
Here $a(x)=[a_{j k}(x)]_{j,k=1}^n$
is a real valued, positive definite 
symmetric matrix,
$b$ takes vaues in $\mathbb{R}^{n}$ and $c$ in $\mathbb{R}$.
We shall always assume that
\begin{equation}\label{eq:Lisslefadj}
  L\ \text{is selfadjoint with domain}\ 
  H^{2}(\Omega)\cap H^{1}_{0}(\Omega)
\end{equation}
i.e., we restrict to Dirichlet boundary conditions.
Note however that in the course of the paper we shall use the
same notation for the selfadjoint operator $L$ and the
differential operator \eqref{eq:opL} (which operates
also on functions outside $D(L)$, 
e.g. in weighted $L^{2}$ spaces).
We shall assume that the metric $a(x)$ is a small perturbation
of the flat metric, in an appropriate sense precised below,
so that in particular trapping is excluded.
Concerning the boundary $\partial \Omega$, we shall always assume
that it is \emph{starshaped with respect to the metric} $a(x)$:
this means
\begin{equation}\label{eq:astarsh}
  a(x)x\cdot\vec\nu(x)\leq 0
  \qquad
  \forall x\in \partial \Omega
\end{equation}
where $\vec\nu(x)$ is the exterior normal to $\Omega$
at $x\in\partial\Omega$. 

The assumptions on the magnetic potential 
$b(x)=(b_{1},\dots,b_{n})$ will be
expressed in terms of the corresponding field
\begin{equation*}
  db=[\partial_{j}b_{\ell}-\partial_{\ell}b_{j}]_{j,\ell=1}^{n}
\end{equation*}
as it is physically natural; actually it is sufficient
to impose bounds only on the \emph{tangential part} of $db$
for the metric $a(x)$, which is the vector
$\widehat{db}=(\widehat{db}_{1},\dots,\widehat{db}_{n})$
defined by
\begin{equation*}
  \widehat{db}(x)=db(x)a(x)\widehat{x}
  \quad\text{i.e.}\quad 
  \widehat{db}_{j}=
  (\partial_{j}b_{\ell}-\partial_{\ell}b_{j})a_{\ell m}
  \widehat{x}_{m},
  \qquad
  \textstyle
  \widehat{x}=\frac{x}{|x|}.
\end{equation*}
This fact was already noted in \cite{barvegzub} (see also \cite{cacvar}).
Here and in the following we use the
convention of implicit summation over repeated indices.
Note that for a vector $w\in \mathbb{C}^{n}$
we define its radial part $w_{R}$ and its tangential part
$w_{T}$ as
\begin{equation}\label{eq:tangder}
  w_{R}:=(\widehat{x}\cdot w)\widehat{x},
  \qquad
  w_{T}:=w-w_{R}
\end{equation}
respectively; we have of course $|w|^{2}=|w_{R}|^{2}+|w_{T}|^{2}$.

The relevant functional spaces for our problem are
the space $\dot Y$ with norm
\begin{equation*}
  \textstyle
  \|v\|_{\dot{Y}}^2:=\sup_{R>0}\frac1{R}
  \int_{\Omega\cap\{|x|\leq R\}}|v|^2dx
  \simeq
  \||x|^{-1/2}v\|_{\ell^{\infty}L^{2}}^{2}
\end{equation*}
and its (pre)dual space $\dot Y^{*}$ with norm
\begin{equation*}
  \|v\|_{\dot Y^{*}}\simeq
  \||x|^{1/2}v\|_{\ell^{1}L^{2}};
\end{equation*}
the notation $\ell^{p}L^{q}$ refers to the dyadic norms
\begin{equation}\label{eq:dyadicsp}
  \|v\|_{\ell^{p}L^{q}}
  :=
  \Bigl(
  \sum_{j\in \mathbb{Z}}
    \|v\|_{L^{q}(\Omega\cap\{2^{j}\le|x|<2^{j+1}\})}^{p}
  \Bigr)^{1/p},
\end{equation}
with obvious modification when $p=\infty$.
Note that $\dot Y^{*}$ is an homogeneous version
of the Agmon--H\"{o}rmander space $B$
(see \cite{agmhor}).
An important role will be played also by the space $\dot X$
with norm
\begin{equation*}
  \textstyle
  \|v\|_{\dot{X}}^2:=\sup_{R>0}\frac1{R^2}
  \int_{\Omega\cap\{|x|=R\}}|v|^2dS
\end{equation*}
where $dS$ is the surface measure on the sphere $|x|=R$.
Our main result is the following;
in the statement 
$|a(x)|$ denotes the operator norm of the matrix $a(x)$,
and we use the shorthand notation $|a'(x)|$ to denote
$\sum_{|\alpha|=1}|\partial^{\alpha}a(x)|$, and similarly for
$a'',a'''$, while 
$|b'(x)|=\sum_{|\alpha|=1}|\partial^{\alpha}b(x)|$.

\begin{theorem}[Limiting absorption principle]\label{the:existence}
  Let $n\ge3$, $\delta\in(0,1)$ and let
  $L$ and $\Omega$ be as in
  \eqref{eq:opL},\eqref{eq:Lisslefadj},\eqref{eq:astarsh}.
  There exist two constants $\overline{\kappa}>0$, 
  $\overline{\sigma}>0$ depending only on $n,\delta$
  such that the following holds.

  Assume that for some $\kappa\in[0,\overline{\kappa}]$ and $K\ge0$
  the coefficients of $L$ satisfy:
  \begin{enumerate}
  [noitemsep,topsep=0pt,parsep=0pt,partopsep=0pt,
  label=\textit{(\roman*)}]
    \item 
    $\|\bra{x}^{\delta}(|a-I|+|x||a'|)\|_{\ell^{1}L^{\infty}}
        <\infty$ and
    \begin{equation*}
      \||a-I|+|x||a'|\|_{\ell^{1}L^{\infty}}
      +|x|^{2}|a''|+|x|^{3}|a'''|\le \kappa.
    \end{equation*}

    \item $b',b^{2}\in L^{n,\infty}$ and $b=b_{S}+b_{L}$ with
    \begin{equation*}
      |x|^{2}|\widehat{db}_{S}|\le \kappa,
      \qquad
      \bra{x}^{\delta+1}|\widehat{db}_{L}|\le K.
    \end{equation*}
    When $n=3$ we assume the stronger condition
    $\||x|^{2}\widehat{db}_{S}\|_{\ell^{1}L^{\infty}}\le \kappa$.

    \item $c=c_{S}+c_{L}$ with
    $|x|^{2}c_{S}$, $|x|^{3}\nabla c_{S}\in L^{\infty}$ and
    \begin{equation*}
      \textstyle
      c_{S}\ge-\frac{\kappa}{|x|^{2}},
      \qquad
      -\partial_{r}(|x|c_{S})\ge-\frac{\kappa}{|x|^{2}},
      \qquad
      \bra{x}^{\delta}|c_{L}|\le K.
    \end{equation*}
  \end{enumerate}
  Then for $\lambda> \overline{\sigma}\cdot(K+K^{2})$
  and all $f$ with $\int|x|^{\delta}\bra{x}|f|^{2}<\infty$
  the equation
  \begin{equation}\label{eq:helmeq}
    (L+\lambda)v=f
  \end{equation}
  has a unique solution $v\in \dot Y\cap H^{2}_{loc}(\Omega)$
  satisfying $v\vert_{\partial\Omega}=0$ and
  the radiation condition
  \begin{equation}\label{eq:somm1}
    \liminf_{R\to+\infty}
    \int_{|x|=R}
    |\nabla ^{b}v-i \widehat{x}\lambda^{1/2}v|^{2}dS=0.
  \end{equation}
  In addition, the solution satisfies the smoothing estimate
  \begin{equation}\label{eq:newsmoo2}
    \textstyle
    \|v\|_{\dot X}
    +
    \lambda^{\frac12}\|v\|_{\dot Y}
    +
    \|\nabla^{b}v\|_{\dot Y}+
    \|(a \nabla^{b}v)_{T}\|_{L^{2}}+
    (n-3)
    \left\|\frac{v}{|x|^{3/2}}\right\|_{L^{2}}
    \le
    c(n)
    \|f\|_{\dot Y^{*}}
  \end{equation}
  and if $\epsilon_{k}\in \mathbb{R}\setminus \{ 0 \} $ is an
  arbitrary sequence with $\epsilon_{k}\to0$, then
  $v$ is the limit in $H^{1}_{loc}(\Omega)$ of the
  solutions $v_{k}\in H^{1}_{0}(\Omega)\cap H^{2}(\Omega)$ of
  \begin{equation*}
    (L+\lambda+i \epsilon_{k})v_{k}=f.
  \end{equation*}
\end{theorem}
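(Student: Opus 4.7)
The strategy is the classical limiting absorption scheme built on top of the uniform resolvent estimate of the form \eqref{eq:newsmoo2} for the complex-shifted equation. The core point is that \eqref{eq:newsmoo2}, once it is known to hold \emph{uniformly} in $\epsilon$ for $v_\epsilon$ solving $(L+\lambda+i\epsilon)v_\epsilon=f$, furnishes enough control to extract a limit $v$ as $\epsilon\to 0$, while a phase-corrected variant forces the radiation condition \eqref{eq:somm1}.

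First I would construct the approximate solutions. For each $\epsilon_k\neq 0$, the selfadjointness hypothesis \eqref{eq:Lisslefadj} and the spectral theorem yield a bounded resolvent $(L+\lambda+i\epsilon_k)^{-1}\colon L^2(\Omega)\to H^2(\Omega)\cap H^1_0(\Omega)$; after a truncation of $f$ to compact sets (harmless since the hypothesis $\int|x|^\delta\bra{x}|f|^2<\infty$ controls the full $\dot Y^*$ norm by dyadic Cauchy--Schwarz) one obtains $v_k\in D(L)$. The main technical input is the validity of \eqref{eq:newsmoo2} for $v_k$ with constants independent of $\epsilon_k$; I would invoke the sharp uniform resolvent estimate established earlier in the paper, whose proof proceeds via a Morawetz multiplier of the form $\phi(|x|)\bigl(a(x)x\cdot \nabla^b v_k+\alpha v_k\bigr)$ paired against the equation. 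The starshapedness \eqref{eq:astarsh} absorbs the boundary term, the smallness conditions on $a-I$, $\widehat{db}_S$ and $c_S$ in (i)--(iii) absorb the short-range errors into the main positive commutator, and the lower bound $\lambda>\bar\sigma(K+K^2)$ dominates the long-range contributions coming from $\widehat{db}_L$ and $c_L$; the imaginary-part identity of the same multiplier shows that the $i\epsilon_k$ term contributes a favorable sign and does not affect the uniform bound.

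Armed with the uniform estimate, I would extract a subsequence $v_{k}\rightharpoonup v$ weakly in the weighted Lebesgue/Sobolev spaces appearing on the left of \eqref{eq:newsmoo2}, strengthened to strong $H^1_{loc}$ convergence by Rellich--Kondrachov. The limit solves $(L+\lambda)v=f$ in the distributional sense, satisfies $v\vert_{\partial\Omega}=0$ by closedness of $H^1_0$, and inherits \eqref{eq:newsmoo2} by weak lower semicontinuity of the norms. For the Sommerfeld condition \eqref{eq:somm1}, I would run a second, phase-corrected multiplier argument — essentially testing the equation against $\phi(|x|)\,\overline{(\nabla^b v-i\widehat{x}\lambda^{1/2}v)_R}$ with a suitable radial weight — to obtain the integrability
\begin{equation*}
  \int_1^\infty \frac{1}{R}\int_{|x|=R}\bigl|\nabla^b v-i\widehat{x}\lambda^{1/2}v\bigr|^2\,dS\,dR<\infty,
\end{equation*}
which immediately forces the $\liminf$ in \eqref{eq:somm1} to vanish.

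Uniqueness is settled last: if $w$ is the difference of two solutions, it solves the homogeneous equation in $\dot Y\cap H^2_{loc}$, vanishes on $\partial\Omega$, and satisfies \eqref{eq:somm1}. Combining \eqref{eq:newsmoo2} applied to $w$ with the radiation condition via a Rellich--Kato type argument gives $w\equiv 0$ outside a large ball, and unique continuation then propagates the vanishing through $\Omega$. The main obstacle I expect is the passage from the bulk smoothing bound \eqref{eq:newsmoo2} to the radiation condition \eqref{eq:somm1}: the phase-corrected Morawetz identity must be engineered so that the short-range perturbations controlled by smallness, and the long-range ones controlled by the largeness of $\lambda$, do not destroy the positivity of the integrated Sommerfeld quantity — a delicate bookkeeping that is the essence of the limiting absorption argument in this variable-coefficient setting.
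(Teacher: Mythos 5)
Your overall blueprint — uniform resolvent smoothing estimate, radiation estimate with a phase-corrected multiplier, compactness to extract a limit, then uniqueness — matches the paper's strategy in broad strokes, but there are two genuine gaps.

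First, your upgrade from weak convergence to strong $H^1_{loc}$ convergence does not follow from Rellich--Kondrachov alone. Boundedness of $v_k$ in $H^1(\Omega\cap\{|x|<R\})$ (which the uniform $\dot Z$-bound gives) plus compact embedding yields strong $L^2_{loc}$ convergence of a subsequence, not strong $H^1_{loc}$ convergence. The paper fills this gap with a Caccioppoli-type local elliptic estimate (Lemma \ref{lem:localbound}): applying it to the differences $v_k-v_h$, which solve $(L+\lambda+i\epsilon_k)(v_k-v_h)=(\epsilon_k-\epsilon_h)v_h$, converts $L^2_{loc}$ Cauchy into $H^1_{loc}$ Cauchy. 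Since the theorem explicitly asserts convergence in $H^1_{loc}$, this step cannot be skipped.

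Second, your uniqueness argument (``Rellich--Kato gives $w\equiv 0$ outside a large ball, then unique continuation propagates the vanishing through $\Omega$'') is the classical constant-coefficient route, but it is not what the paper does and would require additional hypotheses. Under assumption (A) the coefficients may be merely $L^{n,\infty}$ and the metric is only assumed to have bounded dyadic norms; no unique continuation theorem is invoked or available without strengthening (i)--(iii). The paper's uniqueness argument (Corollary \ref{cor:uniq}) is both simpler and works globally: multiplying $(L+\lambda)w=0$ by $\overline{w}$ and taking imaginary parts produces the exact flux identity $\int_{|x|=R}\Im(\overline{w}\,\widehat{x}\cdot\nabla^b w)\,dS=0$ for large $R$, which turns the Sommerfeld quantity into $\int_{|x|=R}(|\nabla^b w|^2+\lambda|w|^2)\,dS$; the radiation condition \eqref{eq:somm1} then supplies the asymptotic hypothesis \eqref{eq:condinf} of the smoothing theorem \ref{the:smoo}, whose conclusion with $f=0$ forces $\|w\|_{\dot X}=0$ and hence $w\equiv 0$ on all of $\Omega$ at once. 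This is the mechanism that makes the smoothing estimate a genuine uniqueness theorem, and it is the piece your proposal misses. The paper then closes the proof by showing that every subsequence has a further subsequence converging to the (unique) radiating solution, which forces full convergence.

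A minor point: the weighted bound $\int|x|^{\delta-1}|\nabla^b_S v|^2<\infty$ that you obtain from the radiation estimate is indeed the right conclusion, and the paper's Corollary \ref{cor:uniq} notes explicitly that it implies \eqref{eq:somm1} by contradiction; you have this step essentially correct.
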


When $K=0$, i.e., when the long range components 
$b_{L},c_{L}$ of the potentials are absent, the previous
result implies that the limiting absorption principle 
is valid for all values of $\lambda$
and for (short range) potentials with critical singularities,
provided suitable smallness conditions are assumed.
When $K\not=0$, i.e., if long range potentials are present,
we obtain a similar result but only for
large frequencies $\lambda$ depending on the size of the
potentials, which can be arbitrarily large.

The structure of the proof is the following:
\begin{itemize}
[noitemsep,topsep=0pt,parsep=0pt,partopsep=0pt]
  \item 
  The main tool used in the Theorem is a smoothing estimate for the
  resolvent $R(z)=(L+z)^{-1}$ outside the spectrum,
  proved in Section \ref{sec:smooest}
  (Theorem \ref{the:smoo}). The estimate improves on
  earlier results, notably on a similar estimate in the
  predecessor of this paper \cite{cacdanluc}.
  Indeed, we admit large potentials with critical singularities
  and the estimate is uniform for $\Re z\gg1$.
  In the short range case, 
  if $\widehat{db}_{S}$ and the negative part of $c_{S}$
  satisfy suitable
  smallness conditions, the estimate is uniform for all
  $z\in\mathbb{C}$. A few  applications
  include the non existence of embedded eigenvalues or
  resonances for $L$, and smoothing estimates
  for the Schr\"{o}dinger and wave flows associated to $L$.

  \item 
  The smoothing estimate alone is not sufficient to
  exclude functions in the
  kernel of $L+\lambda$. However, if the source term $f$
  has a slightly better decay, then the difference
  $\nabla^{b}v-i \widehat{x}\sqrt{\lambda}v$ satisfies a
  stronger estimate, and this is enough to deduce a
  weak Sommerfeld radiation condition and hence uniqueness
  of the solution. The radiation estimate is proved in
  Theorem \ref{the:sommlarge} in
  Section \ref{sec:radest}.

  \item 
  In the last Section \ref{sec:conclusionprf} we put
  together all the elements and prove the limiting
  absorption principle for $L$.
\end{itemize}

We conclude the Introduction by examining a few physically
interesting singular potentials 
to which the previous result can be applied.

\begin{remark}[Coulomb potential]\label{rem:examples}
  We can handle potentials of the form
  \begin{equation*}
    c(x)=\frac{C}{|x|^{a}},
    \qquad
    0<a\le2
  \end{equation*}
  including in particular the Coulomb potential $a=1$.
  In the critical case $a=2$, we must require in addition that
  $C\ge-\overline{\kappa}$ for a suitable $\overline{\kappa}\ge0$
  depending on $n$, however in this case the result is valid
  without restrictions on the frequency.
\end{remark}

\begin{remark}[Aharonov--Bohm]\label{rem:AB}
  Consider a magnetic potential $b(x)$ satisfying
  \begin{equation}\label{eq:homog}
    x \cdot b(x)=0
    \quad\text{and}\quad 
    b(tx)=t^{-1}b(x)
  \end{equation}
  for all $x\in \Omega$ and $t>0$ such that $tx\in \Omega$.
  The first condition is simply a choice of gauge, which is not
  restrictive, and the second one states
  that $b(x)$ is homogeneous of degree $-1$, 
  which is precisely the critical
  scaling for magnetic potentials.
  Then one checks easily (see \cite{barvegzub}) that
  \begin{equation*}
    db(x)\widehat{x}=0
    \quad\text{for all}\quad 
    x\in \Omega.
  \end{equation*}
  This implies
  \begin{equation*}
    \widehat{db}(x)=
    db(x)a(x)\widehat{x}=
    db(x)(a(x)-I)\widehat{x}
  \end{equation*}
  and as a consequence
  \begin{equation*}
    \||x|^{2}\widehat{db}(x)\|_{\ell^{1}L^{\infty}}\le
    \|a-I\|_{\ell^{1}L^{\infty}}\||x|^{2}b\|_{L^{\infty}}.
  \end{equation*}
  Since by homogeneity we have 
  also $\||x|^{2}b\|_{L^{\infty}}<\infty$, recalling that
  $\|a-I\|_{\ell^{1}L^{\infty}}$ is assumed to be 
  sufficiently small, we conclude that any 
  magnetic potential $b$
  satisfying \eqref{eq:homog} (or more generally, any potential
  $b=b_{S}+b_{L}$ with $b_{S}$ satisfying \eqref{eq:homog}
  and $b_{L}$ as in the Theorem)
  is covered by Theorem \ref{the:existence}.
  Interesting examples in $\mathbb{R}^{3}$ include
  the so called \emph{Aharonov--Bohm} potentials
  \begin{equation*}
    b(x)=C
    \left(
      \frac{-x_{2}}{x_{1}^{2}+x_{2}^{2}},
      \frac{x_{1}}{x_{1}^{2}+x_{2}^{2}},
      0
    \right)
  \end{equation*}
  and potentials of the form
  \begin{equation*}
    b(x)=C 
    \left(-\frac{x_{2}}{|x|^{2}},\frac{x_{1}}{|x|^{2}},0\right).
  \end{equation*}
  In both cases the result is valid for all frequencies,
  independently of the size or sign of $C$.
\end{remark}

\section{The smoothing estimate} \label{sec:smooest}

In this section we develop the key tool for
Theorem \ref{the:existence}: a smoothing estimate
for the resolvent of $L$ which is uniform on appropriate
regions of $\mathbb{C}$.
In order to get sharp results, we distinguish two situations:
\begin{enumerate}
  \item
  \emph{short range} perturbations of $\Delta$
  with critical singularities
  (Assumption (A$_{0}$)). In this case we can
  prove a uniform smoothing estimate for all 
  $z\in \mathbb{C}\setminus \mathbb{R}$;
  \item 
  \emph{long range} perturbations of $\Delta$, with large
  electromagnetic potentials of milder decay
  at infinity (Assumption (A)).
  In this case the estimate is uniform on
  a region $\Re z>C$, where $C$ is a suitable norm of the
  long range component of the potentials.
\end{enumerate}
Moreover, from our analysis one can read 
precisely the influence of
different components of the potentials $b$ and $c$ on the
estimate.

The assumptions in the short range case are the following:

\bigskip

\textsc{Assumption (A$_{0}$)}.
Let $n\ge3$ and let
$L$ and $\Omega$ be as in
\eqref{eq:opL},\eqref{eq:Lisslefadj},\eqref{eq:astarsh},
with $b',b^{2}\in L^{n,\infty}$.
We assume that, for some constant $\mu\ge0$
\begin{equation*}
  C_{a}(x):=
  |a-I|+|x||a'|+|x|^{2}|a''|+|x|^{3}|a'''|
  \le\mu,
  \qquad
  \||x|a'\|_{\ell^{1}L^{\infty}}\le\mu.
\end{equation*}
The magnetic field in dimension $n\ge4$ is of the form
$b=b_{I}+b_{II}$ and in dimension $n=3$ of the form $b=b_{I}$, with
\begin{equation*}
  \||x|^{2}\widehat{db}_{I}
    \|_{\ell^{1}L^{\infty}}+
  \||x|^{2}|a-I|\widehat{db}_{II}\|_{\ell^{1}L^{\infty}}+
  \||x|^{2}\widehat{db}_{II}\|_{L^{\infty}}
  \le\mu.
\end{equation*}
The electric field is of the form $c=c_{I}+c_{II}$ with
\begin{equation*}
 \||x|^{2}
 c_{II}\|_{\ell^{1} L^{\infty}}\le\mu,\quad c_{I,-}\in L^\infty
\end{equation*}
and in dimension $n\ge4$
\begin{equation*}
  |a-I|\cdot
  (|x|^{2} |c_{I}|+|x|^{3}|\nabla c_{I}|)+
  |x|^{2}\cdot(c_{I,-}+[\partial_{r}(rc_{I})]_{+}
  )
  \le\mu
\end{equation*}
while in dimension $n=3$
\begin{equation*}
  \|
  |a-I|\cdot
  (|x|^{2} |c_{I}|+|x|^{3}|\nabla c_{I}|)
  +
  |x|^{2}\cdot(c_{I,-}+[\partial_{r}(rc_{I})]_{+}
  )\|
    _{\ell^{1}L^{\infty}}
  \le\mu.
\end{equation*}

\bigskip

In the long range case the assumptions are the following.
Note that Assumption (A) reduces to (A$_{0}$)
when $Z=0$:

\bigskip

\textsc{Assumption (A)}.
We assume $b=b_{I}+b_{II}+b_{III}$
and $c=c_{I}+c_{II}+c_{III}+c_{IV}$
with $b_{I}$, $b_{II}$, $c_{I}$, $c_{II}$
as in (A$_{0}$) while
\begin{equation*}
  \||x|\widehat{db}_{III}\|_{\ell^{1}L^{\infty}}\le Z,
  \qquad
  \||x|\bra{x}^{-1}c_{IV}\|_{\ell^{1}L^{\infty}}\le Z,
\end{equation*}
\begin{equation*}
  \|
  |a-I|\cdot
  (|c_{III}|+|x||\nabla c_{III}|)
  +
  |x|^{2}\cdot(c_{III,-}+[\partial_{r}(rc_{III})]_{+}
    \|_{\ell^{1} L^{\infty}}
  \le Z.
\end{equation*}

\bigskip

Then we can prove:

\begin{theorem}[Smoothing estimate]\label{the:smoo}
  There exist two constants $\mu_{0}(n)$ and $c_{0}(n)$
  depending only on $n$ such that the following holds.

  Let $v\in H^{2}_{loc}(\Omega)$ with $v\vert_{\partial \Omega}=0$
  be such that
  \begin{equation}\label{eq:condinf}
    \liminf_{R\to \infty}
    \int_{|x|=R}(|\nabla^{b}v|^{2}+|v|^{2})dS=0
  \end{equation}
  and define for some $\lambda,\epsilon\in \mathbb{R}$
  \begin{equation*}
    f=(L+\lambda+i \epsilon)v.
  \end{equation*}
  If (A$_{0}$) holds with $\mu<\mu_{0}(n)$ then
  \begin{equation}\label{eq:newsmoo}
    \textstyle
    \|v\|_{\dot X}
    +
    (|\lambda|+|\epsilon|)^{1/2}\|v\|_{\dot Y}
    +
    \|\nabla^{b}v\|_{\dot Y}+
    \|(a \nabla^{b}v)_{T}\|_{L^{2}}+
    (n-3)
    \left\|\frac{v}{|x|^{3/2}}\right\|_{L^{2}}
    \le
    c(n)
    \|f\|_{\dot Y^{*}}.
  \end{equation}
  The same estimate is valid if (A) holds with
  $\mu<\mu_{0}(n)$ and $\lambda\ge c_{0}(n)(Z+Z^{2})$.
\end{theorem}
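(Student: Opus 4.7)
The approach is the Morawetz multiplier method adapted to the magnetic--metric operator $L$. I would pair $(L+\lambda+i\epsilon)v=f$ with a multiplier of the form
\[
\mathcal{M}v \;=\; \phi(|x|)\,(a(x)\widehat{x})\cdot\nabla^{b} v \;+\; \psi(|x|)\,v \;+\; i\,\lambda^{1/2}\chi(|x|)\,v,
\]
integrate by parts on $\Omega$, and take real and imaginary parts. The radial piece $\phi\,(a\widehat{x})\cdot\nabla^{b}v$ is the virial/Morawetz multiplier adapted to the metric $a$; the correction $\psi v$ eliminates certain commutator obstructions and, via a weighted Hardy inequality, produces both $\|v\|_{\dot X}$ and the $(n-3)\||x|^{-3/2}v\|_{L^{2}}$ terms on the left; the imaginary piece $i\lambda^{1/2}\chi v$ generates, after taking imaginary parts, the square of the Sommerfeld residue $\nabla^{b}v-i\widehat{x}\lambda^{1/2}v$, whose radial and tangential decomposition reproduces exactly $\lambda^{1/2}\|v\|_{\dot Y}$ and $\|(a\nabla^{b}v)_{T}\|_{L^{2}}$.

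Concrete steps in order. First, take $\phi=\phi_{R}$ to be the concave piecewise linear profile $\phi_{R}(r)=\min\{r,R\}/R$ and later take the supremum over $R>0$; by the dyadic Agmon--H\"ormander structure of $\dot X,\dot Y,\dot Y^{*}$, this supremum converts the pointwise multiplier identity into the $\ell^{\infty}L^{2}$/$\ell^{1}L^{2}$ norms of the statement. Second, compute the multiplier identity: the principal-part pairing yields a positive definite quadratic form in $(v,\nabla^{b}v)$, plus a boundary term on $\partial\Omega$ equal to $-\phi(|x|)\,(ax\!\cdot\!\vec\nu)\,|\partial_{\vec\nu}v|^{2}\ge 0$ by \eqref{eq:astarsh} together with the Dirichlet condition, plus a boundary term at $|x|=R$ which vanishes along a subsequence $R_{k}\to\infty$ thanks to \eqref{eq:condinf}. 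Third, control the error terms: metric errors bounded pointwise by $C_{a}(x)$ times $|\nabla^{b}v|^{2}$ or $|v|^{2}$; magnetic errors that depend only on the tangential field $\widehat{db}$, because the multiplier is radial and the magnetic current contracts with $a\widehat{x}$ to produce exactly $\widehat{db}$; and electric errors split as in (A$_{0}$) into $c_{I,-},[\partial_{r}(rc_{I})]_{+},c_{II,-}$ pieces. Under (A$_{0}$) with $\mu<\mu_{0}(n)$, each short-range error is bounded by $\mu$ times the left-hand side of \eqref{eq:newsmoo} and absorbed; the critical $|x|^{-2}$ singularities are handled by Hardy's inequality combined with the diamagnetic inequality.

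For the long-range case (A), the extra pieces $b_{III},c_{III},c_{IV}$ only enjoy the weaker weights $|x|^{-1}$ and $\bra{x}^{-1}$ with bound $Z$, and therefore produce error contributions of the form $Z\,\|v\|_{\dot Y}^{2}$ and mixed terms controlled by $Z\,\|\nabla^{b}v\|_{\dot Y}\|v\|_{\dot Y}$. These cannot be absorbed by smallness alone, but using the $\lambda^{1/2}\|v\|_{\dot Y}$ piece on the left, which squares to $\lambda\,\|v\|_{\dot Y}^{2}$ in the multiplier identity, they are absorbed provided $\lambda\ge c_{0}(n)(Z+Z^{2})$, with the $Z^{2}$ arising from Cauchy--Schwarz on the mixed terms; this fixes the threshold in the statement.

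The main obstacle I expect is the simultaneous calibration of the profiles $\phi,\psi,\chi$ and of all the coefficients so that the five norms on the left of \eqref{eq:newsmoo} appear with positive, dimension-correct coefficients while the error bounds stay linear in $\mu$ and $Z$ with the exact $\ell^{1}L^{\infty}$ dyadic weights required by the hypotheses. In particular, extracting a sharp $(n-3)$ Hardy contribution forces $\psi$ to depend on both $\phi'$ and $a\widehat{x}\cdot\widehat{x}$; and in dimension $n=3$ the vanishing of this Hardy gain is exactly why the short-range hypothesis on $\widehat{db}_{S}$ must be strengthened from a pointwise bound to an $\ell^{1}L^{\infty}$ bound.
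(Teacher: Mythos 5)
Your high-level framework — a Morawetz/virial multiplier adapted to the metric, piecewise profile $\phi_R$ with a supremum over $R$, starshapedness for the boundary sign, $\ell^1 L^\infty$ dyadic bookkeeping, and the $\lambda\gtrsim Z+Z^2$ threshold coming from absorbing $Z\|v\|_{\dot Y}^2$ and $Z\|\nabla^b v\|_{\dot Y}\|v\|_{\dot Y}$ into $\lambda\|v\|_{\dot Y}^2$ — all match the paper's strategy. But the third piece of your multiplier, $i\lambda^{1/2}\chi v$, and the claimed mechanism built on it, are where the proposal goes wrong.

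The paper does \emph{not} introduce any Sommerfeld-type multiplier in the proof of the smoothing estimate. The virial multiplier $[A^b,\psi]\overline{v}+\phi\overline{v}$ (Theorem~\ref{the:morid}), with $\psi=\frac{1}{2R}|x|^2\mathbf{1}_{|x|\le R}+|x|\mathbf{1}_{|x|>R}$ and the \emph{non-radial} weight $\phi=-\widehat{a}/R\cdot\mathbf{1}_{|x|\le R}$, produces every term on the left of \eqref{eq:newsmoo} by itself: $\int|(a\nabla^b v)_T|^2/|x|$ comes from the coefficient $s_{\ell m}$ in $I_{\nabla v}$ (Lemma~\ref{lem:Inablav}), and $\lambda\|v\|_{\dot Y}^2$ comes from the $-\lambda\phi|v|^2$ term in $I_v$ (Lemma~\ref{lem:Iv}). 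Your claim that the radial/tangential decomposition of the Sommerfeld residue $\nabla^b v-i\widehat{x}\lambda^{1/2}v$ ``reproduces exactly $\lambda^{1/2}\|v\|_{\dot Y}$ and $\|(a\nabla^b v)_T\|_{L^2}$'' does not hold: $(\nabla^b_S v)_T=(\nabla^b v)_T$, not $(a\nabla^b v)_T$, and the radial part is $(\widehat{x}\cdot\nabla^b v-i\lambda^{1/2}v)\widehat{x}$, which does not split into a clean $\lambda|v|^2$ piece plus a separately signed remainder — the cross term $-2\lambda^{1/2}\Im(\widehat{x}\cdot\nabla^b v\,\overline{v})$ is nontrivial and must itself be integrated by parts. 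Moreover, your construction would only make sense for $\lambda>0$, whereas Theorem~\ref{the:smoo} holds with no sign restriction on $\lambda$; the paper handles $\lambda\le0$ separately via the auxiliary estimate of Lemma~\ref{lem:lambdameno}, and obtains the $|\epsilon|^{1/2}\|v\|_{\dot Y}$ term by an entirely different argument (Lemma~\ref{lem:epsilon}, built on identity~\eqref{eq:morid2} with a cutoff). The Sommerfeld multiplier does appear in the paper, but only in Section~\ref{sec:radest} for the radiation estimate (Theorem~\ref{the:sommlarge}), which is a non-closed estimate that \emph{presupposes} the smoothing estimate and is genuinely a second, separate step; folding it into the smoothing-estimate multiplier conflates the two and would not close the argument.
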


\begin{remark}[Uniform resolvent estimate]\label{rem:resest}
  Condition \eqref{eq:condinf} is satisfied
  if $v$ is in $H^{1}$.
  Thus the Theorem applies in particular to the solution $v$ of
  \begin{equation*}
    (L+\lambda+i \epsilon)v=f
  \end{equation*}
  for $\epsilon\neq0$ and $f\in L^{2}(\Omega)$,
  which exists and belongs to $H^{1}_{0}(\Omega)\cap H^{2}(\Omega)$
  by the assumptions on $L$. This gives the following 
  estimate for the resolvent operator $R(z)=(z+L)^{-1}$,
  uniform in $z\not\in \mathbb{R}$ or in
  $\Re z\ge c_{0}(n)(Z+Z^{2}),z\not\in \mathbb{R}$ respectively:
  \begin{equation*}
    \|\nabla^{b}R(z)f\|_{\dot Y}
    +
    |z|^{1/2}
    \|R(z)f\|_{\dot Y}
    +
    \|R(z)f\|_{\dot X}
    \lesssim\|f\|_{\dot Y^{*}}.
  \end{equation*}
\end{remark}

\begin{remark}[Absence of embedded eigenvalues or resonances]
  \label{rem:reson}
  Suppose $v$ is a solution of
  \begin{equation*}
    (L+\lambda)v=0,
    \qquad
    v\vert_{\partial \Omega}=0
  \end{equation*}
  for some $\lambda\ge c_{0}(n)(Z+Z^{2})$.
  From the smoothing estimate, we see that if
  $v$ satisfies condition \eqref{eq:condinf} then $v \equiv0$.

  Since any function $v\in H^{1}_{0}(\Omega)$ satisfies 
  condition \eqref{eq:condinf}, this implies that there is no
  eigenvalue $\lambda\ge c_{n}(Z+Z^{2})$.
  In particular in the case $Z=0$ (that is to say, 
  under condition (A$_{0}$)) we obtain there are no
  \emph{embedded eigenvalues} in the spectrum of  $L$.

  A similar argument gives a more general result about
  resonances. 
  Writing $\Omega_{\le R}=\Omega\cap\{|x|\le R\}$,
  we say that a function $v$ is a \emph{resonance at} 
  $z\in \mathbb{C}$ if
  \begin{equation*}
    \textstyle
    (L+z)v=0,
    \qquad
    v\vert_{\partial \Omega}=0,
    \qquad
    v\not \equiv0,
    \qquad
    \liminf_{R\to \infty}\frac1R\int_{\Omega_{\le R}}|v|^{2}=0.
  \end{equation*}
  Note that the last condition is weaker than the usual one:
  \begin{equation*}
    \textstyle
    \bra{x}^{-s}v\in L^{2}
    \quad\text{for some}\quad 
    s<\frac12
    \quad\implies\quad
    \lim_{R\to \infty}\frac1R\int_{\Omega_{\le R}}|v|^{2}=0.
  \end{equation*}
  Then we have
\end{remark}

\begin{corollary}[Absence of resonances]\label{cor:reson}
  Assume (A) holds with $\mu<\mu_{0}(n)$, and let 
  $\lambda\ge c_{0}(n)(Z+Z^{2})$. Then no resonance exists 
  at $\lambda$.
\end{corollary}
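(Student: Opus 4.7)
The strategy is to reach a contradiction by applying the smoothing estimate of Theorem \ref{the:smoo} with $f=0$ and $\epsilon=0$ directly to the putative resonance $v$, which forces $v\equiv 0$. Assumption (A) with $\mu<\mu_{0}(n)$ and the spectral threshold $\lambda\ge c_{0}(n)(Z+Z^{2})$ are built into the hypotheses of the corollary, and $v\in H^{2}_{loc}(\Omega)$ follows from elliptic regularity applied to $(L+\lambda)v=0$. Hence the only non-trivial hypothesis of the Theorem to verify is the flux-vanishing condition \eqref{eq:condinf}, and the bulk of the argument is devoted to extracting this from the weaker decay in the definition of a resonance.

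The resonance assumption supplies a sequence $R_{k}\to\infty$ with $\eta_{k}:=\frac{1}{R_{k}}\int_{\Omega_{\le R_{k}}}|v|^{2}\to 0$. A straightforward Fubini argument on the interval $[R_{k}/2,R_{k}]$ already delivers radii $r_{k}\to\infty$ with $\int_{|x|=r_{k}}|v|^{2}\,dS\to 0$. To upgrade this so that $|\nabla^{b}v|^{2}$ can be included on the same spheres, I would perform a Caccioppoli-type computation: pair $(L+\lambda)v=0$ with $\chi_{k}^{2}\bar v$, where $\chi_{k}$ is a smooth cutoff supported in $\{R_{k}/4\le|x|\le R_{k}\}$ and equal to $1$ on $\{R_{k}/3\le|x|\le 3R_{k}/4\}$ (the boundary term vanishes because $\chi_{k}$ is supported away from $\partial\Omega$ for large $k$). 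Integrating by parts, taking real parts and absorbing the cross term with Cauchy--Schwarz, one obtains
\begin{equation*}
\int \chi_{k}^{2}\,a(\nabla^{b}v)\cdot\overline{\nabla^{b}v}\;\lesssim\; \int|\nabla\chi_{k}|^{2}|v|^{2}+\lambda\int\chi_{k}^{2}|v|^{2}+\int\chi_{k}^{2}|c|\,|v|^{2}.
\end{equation*}

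Using ellipticity of $a$, the bound $\int_{\Omega_{\le R_{k}}}|v|^{2}=\eta_{k}R_{k}$, and the fact that $|c|$ is bounded on the annulus for $|x|\gg 1$ (derived from the $\ell^{1}L^{\infty}$ conditions in (A)), the right-hand side is $O((\lambda+R_{k}^{-2})\eta_{k}R_{k})$. A second Fubini averaging on $[R_{k}/2,3R_{k}/4]$ then yields radii $r_{k}$ at which \emph{both} $\int_{|x|=r_{k}}|v|^{2}dS$ and $\int_{|x|=r_{k}}|\nabla^{b}v|^{2}dS$ go to zero. Condition \eqref{eq:condinf} is thereby verified along $r_{k}$, and Theorem \ref{the:smoo} with vanishing source produces $\|v\|_{\dot{X}}+\lambda^{1/2}\|v\|_{\dot{Y}}+\cdots\le 0$, whence $v\equiv 0$, contradicting $v\not\equiv 0$.

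The main technical obstacle is ensuring that all the lower-order contributions in the Caccioppoli inequality remain benign as $R_{k}\to\infty$. The long-range pieces $b_{III}$, $c_{III}$, $c_{IV}$ are only controlled in weighted $\ell^{1}L^{\infty}$ norms and could in principle interfere with the averaging; however the $\ell^{1}$ summability forces their $L^{\infty}$ size on the dyadic annulus containing $R_{k}$ to be summable in $k$ and hence tend to zero, so the resulting contributions are absorbed into the error $O(\eta_{k}R_{k})$. Modulo these routine bookkeeping estimates the scheme closes, and the corollary reduces to a one-line deduction from Theorem \ref{the:smoo}.
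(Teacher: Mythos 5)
Your proposal follows the same route as the paper: verify the flux condition \eqref{eq:condinf} via a Caccioppoli-type estimate combined with a Fubini averaging over radii, then apply Theorem~\ref{the:smoo} with $f=0$ to conclude $v\equiv0$. The only differences are cosmetic — the paper has already packaged the Caccioppoli bound as Lemma~\ref{lem:localbound} (on full balls, using the Dirichlet condition rather than an annular cutoff) and, because $c$ enters $\int(\lambda-c)|\psi v|^2$ with a favorable sign, it only needs $\|c_-\|_{L^\infty(|x|\ge1)}$ bounded rather than $|c|$, which is cleaner under assumption~(A) than your claim that $|c|$ itself is bounded.
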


\begin{proof}
  We must only prove that $v$ satisfies condition \eqref{eq:condinf}.
  For $|v|^{2}$ this follows immediately from the assumption
  $\liminf\frac1R\int_{\Omega_{\le R}}|v|^{2}=0$. 
  For $|\nabla^{b}v|^{2}$,
  we apply Lemma \ref{lem:localbound} from
  Section \ref{sec:conclusionprf} which gives
  \begin{equation*}
    \textstyle
    \liminf\frac1R\int_{\Omega_{\le R}}|\nabla^{b} v|^{2}
    \lesssim
    \liminf\frac1R\int_{\Omega_{\le R}}|v|^{2}=0.
  \end{equation*}
\end{proof}

\begin{remark}[Smoothing estimates for dispersive flows]
  \label{rem:schrowave}
  A natural application of estimate \eqref{eq:newsmoo}
  to dispersive equations is given by
  Kato's theory of smoothing operators. We recall the procedure
  in the simplest case. 
  Assume (A$_{0}$) holds. Then, from
  \eqref{eq:newsmoo} we deduce the (Hilbert space) estimate
  \begin{equation*}
    \|\bra{x}^{-3/2-}v\|_{L^{2}}
    \lesssim
    \|\bra{x}^{1/2+}f\|_{L^{2}}
  \end{equation*}
  uniform in $\lambda+i \epsilon\not\in \mathbb{R}$,
  which can be written as the resolvent estimate
  \begin{equation*}
    \|\bra{x}^{-3/2-}R(z)f\|_{L^{2}}
    \lesssim
    \|\bra{x}^{1/2+}f\|_{L^{2}}
  \end{equation*}
  uniform in $z\not\in \mathbb{R}$. By duality and interpolation
  we get
  \begin{equation*}
    \|\bra{x}^{-1-}R(z)f\|_{L^{2}}
    \lesssim
    \|\bra{x}^{1+}f\|_{L^{2}}
    \quad\text{i.e.}\quad 
    \|A^{*}R(z)Af\|_{L^{2}}
    \lesssim
    \|f\|_{L^{2}}
  \end{equation*}
  where $A=\bra{x}^{-1-}$ is the multiplication operator.
  In terms of Kato's theory, this means that $A$ is 
  \emph{supersmoothing} for the operator $L$, and immediate
  consequences of the theory are the estimates for the
  Schr\"{o}dinger group $e^{itL}$
  \begin{equation*}
    \|\bra{x}^{-1-}e^{itL}f\|_{L^{2}_{t}L^{2}(\Omega)}
    \lesssim
    \|f\|_{L^{2}(\Omega)}
  \end{equation*}
  and the corresponding Duhamel term
  \begin{equation*}
    \textstyle
    \|\int_{0}^{t}\bra{x}^{-1-}e^{i(t-s)L}F(s)ds\|
        _{L^{2}_{t}L^{2}(\Omega)}
    \lesssim
    \|\bra{x}^{1+}F\|
        _{L^{2}_{t}L^{2}(\Omega)}.
  \end{equation*}
  Moreover, if $L$ is nonnegative, we also get the estimate
  for the wave flow $e^{it \sqrt{L}}$
  \begin{equation*}
    \|\bra{x}^{-1-}e^{it \sqrt{L}}f\|
        _{L^{2}_{t}L^{2}(\Omega)}
    \lesssim
    \|L^{1/4}f\|_{L^{2}(\Omega)}
  \end{equation*}
  and a similar one for the Duhamel term.
  With some more work, smoothing estimates
  with a weight $\bra{x}^{-1/2-}$ can be deduced for
  the flows $|D|^{1/2}e^{itL}$ and
  $|D|^{1/2}e^{it \sqrt{L}}$.
  For more details, and the extension of Kato's theory to
  the wave and Klein--Gordon groups, we refer to
  \cite{dan}.
\end{remark}

\subsection{Notations}\label{sub:notations}

With the convention of implicit summation over repeated indices,
we write
\begin{equation*}
  A^{b}v:=\nabla^b\cdot(a(x)\nabla^bv)=
  \partial_j^b(a_{jk}(x)\partial_k^bv),
  \qquad
  Av:=\nabla\cdot(a(x)\nabla v)=
  \partial_j(a_{jk}(x)\partial_kv).
\end{equation*}
We use the notations
\begin{equation*}
  \widehat{x}:=\frac{x}{|x|}=(\widehat{x}_{1},\dots,\widehat{x}_{n}),
  \qquad
  a(w,z):=a_{jk}(x)w_k\overline{z}_j,
  \qquad
  a_{jk;\ell}:=\partial_{\ell}a_{jk}
\end{equation*}
and
\begin{equation*}
  \widehat{a}(x):=a_{\ell m}(x)\widehat{x}_{\ell}\widehat{x}_{m},
  \qquad
  \overline{a}(x):=\mathop{tr}a(x)=a_{mm}(x),
  \qquad
  \widetilde{a}:=a_{\ell m;\ell}\widehat{x}_{m}.
\end{equation*}
If $a(x)$ is positive definite, we have
\begin{equation*}
  0\le \widehat{a}= a \widehat{x}\cdot \widehat{x}
  \le |a \widehat{x}|\le \overline{a}.
\end{equation*}
We shall use frequently the following identity,
valid for any radial function $\psi(x)=\psi(|x|)$:
\begin{equation}\label{eq:apsigen}
  A \psi(x)=\partial_{\ell}(a_{\ell m}\widehat{x}_{m}\psi')
  =
  \widehat{a}\psi''+
  \frac{\overline{a}-\widehat{a}}{|x|}
  \psi'
  +
  \widetilde{a}\psi'
\end{equation}
where $\psi'$ denotes the derivative of $\psi(r)$ with respect
to the radial variable. 

In order to refine the scale of dyadic spaces
$\ell^{p}L^{q}$, we introduce the
mixed radial-angular $L^{q}L^{r}$ norms on an
annulus $C=R_{1}\le|x|\le R_{2}$
\begin{eqnarray*}
  \textstyle
  \|v\|_{L^{q}L^{r}(C)}=
  \|v\|_{L^{q}_{|x|}L^{r}_{\theta}(C)}
  :&=&(\int_{R_{1}}^{R_{2}}
    (\int_{|x|=\rho}|v|^{r}dS)^{q/r}d\rho)^{1/q}\\
 &=&\left\|\|v\|_{L^{r}(|x|=\rho)}\right\|_{L^{q}(R_{1},R_{2};d\rho)},
\end{eqnarray*}
and on $\Omega\cap C$ we define
$\|v\|_{L^{q}L^{r}(\Omega\cap C)}=\|\one{\Omega}v\|_{L^{q}L^{r}}$.
When $q=r$ this definition reduces to the usual
$L^{q}(C)$ norm.
Then we define for all $p,q,r\in[1,\infty]$
\begin{equation}\label{eq:lpLp}
  \|v\|_{\ell^{p}L^{q}L^{r}}
  :=
  \|\{\|v\|_{L^{q}L^{r}(\Omega_{j})}\}
  _{j\in \mathbb{Z}}\|_{\ell^{p}},
  \qquad
  \Omega_{j}=\Omega\cap\{2^{j}\le|x|<2^{j+1}\}.
\end{equation}
In the case $q=r$ we reobtain the previous dyadic norms:
\begin{equation*}
  \|v\|_{\ell^{p}L^{q}}
  =
  \|v\|_{\ell^{p}L^{q}L^{q}}.
\end{equation*}
Both spaces $\dot X,\dot Y$ are included in this finer scale
\begin{equation}\label{eq:equivnorms}
  \|v\|_{\dot X}\simeq
  \||x|^{-1}v\|_{\ell^{\infty}L^{\infty}L^{2}},
  \qquad
  \|v\|_{\dot Y}\simeq
  \||x|^{-1/2}v\|_{\ell^{\infty}L^{2}}
\end{equation}
like the predual norm $\dot Y^{*}$, which is given by
\begin{equation*}
  \textstyle
  \|v\|_{\dot Y^{*}}\simeq
  \||x|^{1/2}v\|_{\ell^{1}L^{2}}
  \simeq
  \sum_{j\in \mathbb{Z}}
  2^{j/2}\|v\|_{L^{2}(C_{j}\cap \Omega)}.
\end{equation*}

\begin{remark}[Magnetic Hardy inequality]\label{rem:maghardy}
  We shall make frequent use of the \emph{magnetic Hardy inequality},
  valid for $s<n/2$ and $w\in H^{1}_{0}(\Omega)$:
  \begin{equation}\label{eq:maghardy}
    \textstyle
    \||x|^{-s}w\|_{L^{2}}
    \le
    \frac{2}{n-2s}
    \||x|^{1-s}\nabla^{b}w\|_{L^{2}}.
  \end{equation}
  This is proved as usual, starting from the identity
  \begin{equation*}
    \nabla \cdot
    \left\{
      \frac{\widehat{x}}{|x|^{2s-1}}|w|^{2}
    \right\}
    =
    2\Re \frac{w\;\widehat{x}\cdot \overline{\nabla w}}{|x|^{2s-1}}
    +
    \frac{n-2s}{|x|^{2s}}|w|^{2}
    =
    2\Re 
    \frac{w\;\widehat{x}\cdot \overline{\nabla^{b} w}}{|x|^{2s-1}}
    +
    \frac{n-2s}{|x|^{2s}}|w|^{2}
  \end{equation*}
  then integrating on $\Omega$, estimating with
  Cauchy--Schwartz
  \begin{equation*}
    \textstyle
    \int_{\Omega}
    \frac{n-2s}{|x|^{2s}}|w|^{2}dx
    \le
    \alpha \int_{\Omega}
    \frac{|w|^{2}}{|x|^{2s}}dx
    +
    \alpha^{-1}\int_{\Omega}
    \frac{|\nabla^{b}w|^{2}}{|x|^{2s-2}}
  \end{equation*}
  and finally optimizing the value of $\alpha$.
\end{remark}

\subsection{Basic identities and boundary terms}
\label{sub:virialid}

Using the two multipliers
\begin{equation*}
  [A^{b},\psi]\overline{v}=(A \psi)\overline{v}
  +2a(\nabla \psi,\nabla^{b}v)
  \quad\text{and}\quad 
  \phi \overline{v}
\end{equation*}
one obtains the following Morawetz type identities,
proved in \cite{cacdanluc}:

\begin{theorem}\label{the:morid}
  Let $v\in H^{2}_{loc}(\Omega)$
  on an open set $\Omega \subseteq \mathbb{R}^{n}$,
  $\lambda,\epsilon\in \mathbb{R}$,
  $a(x):\Omega\to \mathbb{R}^{n \times n}$ symmetric,
  $b(x):\Omega\to \mathbb{R}^{n}$ and
  $c,\phi,\psi:\Omega\to \mathbb{R}$ sufficiently smooth,
  and let
  \begin{equation}\label{eq:deff}
    f:=A^{b}v-c(x)v+(\lambda+i \epsilon)v.
  \end{equation}
  Then the following identity holds:
  \begin{equation}\label{eq:morid}
    I_{\nabla v}+I_{v}+I_{\epsilon}+I_{b}+I_{f}=
    \Re \partial_{j}\{Q_{j}+P_{j}\}
  \end{equation}
  where
  \begin{equation}\label{eq:If}
    I_{f}=
    \Re [(A\psi+\phi)\overline{v}f+2a(\nabla\psi,\nabla^bv)f],
  \end{equation}
  \begin{equation}\label{eq:Inablav}
    I_{\nabla v}=
    \alpha_{\ell m} 
    \Re(\partial^{b}_{m}v\ \overline{\partial^{b}_{\ell}v})
        +a(\nabla^{b}v,\nabla^{b}v)\phi,
    \quad
    \alpha_{\ell m}:=
      2a_{jm}\partial_{j}(a_{\ell k}
      \partial_{k}\psi)
      -a_{jk}\partial_{k}\psi \partial_{j}a_{\ell m}
  \end{equation}
  \begin{equation}\label{eq:Iv}
    \textstyle
    I_{v}=
    -\frac12 A(A \psi+\phi)|v|^{2}
    -[a(\nabla \psi,\nabla c)-c \phi+\lambda \phi]|v|^{2}
  \end{equation}
  \begin{equation}\label{eq:Iepsilon}
    I_{\epsilon}=2 \epsilon\Im[a(\nabla \psi,\nabla^{b}v)v]
  \end{equation}
  \begin{equation}\label{eq:Ib}
    I_{b}=2\Im[a_{jk}\partial^{b}_{k}v
       (\partial_{j}b_{\ell}-\partial_{\ell}b_{j})
       a_{\ell m}\partial_{m}\psi\ \overline{v}]
    =
    2\Im [(a\nabla^{b}v)\cdot (db \ a\nabla \psi)\overline{v}]
  \end{equation}
  and
  \begin{equation*}
    Q_{j}=
    a_{jk}\partial^{b}_{k}v \cdot 
      [A^{b},\psi]\overline{v}
      -\textstyle\frac12 a_{jk}(\partial_{k}A \psi)|v|^{2}
      -a_{jk}\partial_{k}\psi 
      \left[(c-\lambda)|v|^{2}+a(\nabla^{b}v,\nabla^{b}v)\right]
  \end{equation*}
  \begin{equation*}
    P_{j}=
    a_{jk}\partial^{b}_{k}v\phi \overline{v}
       -\textstyle \frac12 a_{jk}\partial_{k}\phi|v|^{2}.
  \end{equation*}
  Moreover we have the identity
  \begin{equation}\label{eq:morid2}
    \partial_{j}P_{j}=
    a(\nabla^{b}v,\nabla^{b}v)\phi
    +(c-\lambda-i \epsilon)|v|^{2}\phi
    +f \overline{v}\phi
    -\textstyle \frac12 A \phi|v|^{2}
    +i\Im a(\nabla^{b} v,v \nabla \phi).
  \end{equation}
\end{theorem}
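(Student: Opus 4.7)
The plan is to establish \eqref{eq:morid} as a pointwise algebraic identity by multiplying the equation $f = A^b v - cv + (\lambda+i\epsilon)v$ against the combined multiplier
\[
M := (A\psi+\phi)\overline{v} + 2a(\nabla\psi,\nabla^b v),
\]
so that $\Re\{Mf\} = I_f$, and then rewriting $\Re\{M(A^b v - cv + \lambda v + i\epsilon v)\}$ in the form $-I_{\nabla v} - I_v - I_\epsilon - I_b + \Re\,\partial_j(Q_j + P_j)$ by systematic use of the Leibniz rule. No integration is needed; the claim lives at the level of densities.

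The principal work lies in the piece
\[
2\Re\bigl\{a_{jk}\partial_k\psi\,\overline{\partial_j^b v}\,\partial_\ell^b(a_{\ell m}\partial_m^b v)\bigr\}.
\]
After pulling out the divergence $2\Re\,\partial_\ell\{a_{\ell m}\partial_m^b v\cdot a_{jk}\partial_k\psi\,\overline{\partial_j^b v}\}$ (which feeds $Q_j$), the remainder is $-2\Re\{a_{\ell m}\partial_m^b v\,\partial_\ell(a_{jk}\partial_k\psi\,\overline{\partial_j^b v})\}$. Distributing $\partial_\ell$ by Leibniz: the part landing on $a_{jk}\partial_k\psi$ produces the $-a_{jk}\partial_k\psi\,\partial_j a_{\ell m}$ piece of $\alpha_{\ell m}$; the part landing on $\overline{\partial_j^b v}$ requires commuting $\partial_\ell$ past $\partial_j^{-b}$, which unleashes the magnetic commutator $[\partial_\ell,\partial_j^{-b}]\overline{v} = -i\,db_{\ell j}\overline{v}$ and, after contraction against $a_{\ell m}\partial_m^b v\,a_{jk}\partial_k\psi$, delivers exactly $I_b$; the remaining commutator-free part, symmetrized in $(\ell,m)$, produces the $2a_{jm}\partial_j(a_{\ell k}\partial_k\psi)$ half of $\alpha_{\ell m}$ paired with $\partial_m^b v\,\overline{\partial_\ell^b v}$.

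The companion piece $\Re\{(A\psi+\phi)\overline{v}\,A^b v\}$ is handled by one integration by parts producing the $\phi\,a(\nabla^b v,\nabla^b v)$ term of $I_{\nabla v}$ together with the $P_j$ divergence; two further integrations by parts sliding derivatives onto $A\psi+\phi$ extract $-\tfrac12 A(A\psi+\phi)|v|^2$ for $I_v$. Multiplying $M$ against $-cv$ gives, by one Leibniz, the $[c\phi - a(\nabla\psi,\nabla c)]|v|^2$ portion of $I_v$ modulo a $Q_j$ contribution; $\lambda v$ produces $\lambda\phi|v|^2$; and $i\epsilon v$ survives only through $2\Re\{a(\nabla\psi,\nabla^b v)\cdot i\epsilon v\}$, yielding $-I_\epsilon$ once one tracks the conjugation convention $a(w,z) = a_{jk}w_k\overline{z}_j$. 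The auxiliary identity \eqref{eq:morid2} is obtained by a direct Leibniz expansion of $\partial_j P_j$, substituting $A^b v = f + cv - (\lambda+i\epsilon)v$ inside the term $\phi\overline{v}\,A^b v$ that appears.

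The main obstacle is the bookkeeping in the principal step: correctly symmetrizing the raw products in the pair $(\ell,m)$ so that the quadratic form $\alpha_{\ell m}$ emerges unambiguously, and cleanly isolating the magnetic commutator contribution $I_b$ while tracking signs throughout the $\overline{\partial_j^b v} = \partial_j^{-b}\overline{v}$ conjugation. Everything else is routine algebra in a matrix-valued setting.
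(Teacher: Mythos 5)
The paper does not actually give a proof of Theorem~\ref{the:morid} in this source: it is stated as ``proved in \cite{cacdanluc}''. Your proposal follows the standard multiplier derivation (multiply $f$ against $M=(A\psi+\phi)\overline{v}+2a(\nabla\psi,\nabla^b v)$, expand by Leibniz, isolate divergences), which is certainly the route taken in the cited predecessor, and your account of \eqref{eq:morid2} and of the extraction of $P_j$, $\phi\, a(\nabla^b v,\nabla^b v)$, $I_\epsilon$, and $I_v$ is correct. So the overall approach is right.

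There is, however, a concrete error in the central step. You write $[\partial_\ell,\partial_j^{-b}]\overline{v}=-i\,db_{\ell j}\overline{v}$. In fact
\begin{equation*}
[\partial_\ell,\partial_j^{-b}]\overline{v}=\partial_\ell(-ib_j)\overline{v}=-i(\partial_\ell b_j)\overline{v},
\end{equation*}
a single non-antisymmetric derivative. The antisymmetric $db_{\ell j}=\partial_\ell b_j-\partial_j b_\ell$ appears only in the commutator of two \emph{magnetic} derivatives with the same gauge: $[\partial_\ell^{b},\partial_j^{b}]=i\,db_{\ell j}$, equivalently $[\partial_\ell^{-b},\partial_j^{-b}]=-i\,db_{\ell j}$. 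This is tied to a second imprecision: when you pull out the divergence from $2\Re\{a_{jk}\partial_k\psi\,\overline{\partial_j^b v}\,\partial_\ell^b(a_{\ell m}\partial_m^b v)\}$ you state the remainder as $-2\Re\{a_{\ell m}\partial_m^b v\,\partial_\ell(a_{jk}\partial_k\psi\,\overline{\partial_j^b v})\}$ with a plain $\partial_\ell$, whereas the correct gauge-covariant Leibniz identity is
\begin{equation*}
\overline{w}\,\partial_\ell^b z=\partial_\ell(\overline{w}z)-z\,\overline{\partial_\ell^b w},
\end{equation*}
so the remainder carries $\overline{\partial_\ell^b w}=\partial_\ell^{-b}\overline{w}$, not $\partial_\ell\overline{w}$. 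The $ib_\ell$ you drop here is exactly what, combined with the commutator, builds the antisymmetric $db$: one needs $\overline{\partial_j^b\partial_\ell^b v}$ inside the remainder (not $\partial_\ell\overline{\partial_j^b v}$ or $\partial_j^{-b}\partial_\ell\overline{v}$), after which $[\partial_\ell^b,\partial_j^b]=i\,db_{\ell j}$ supplies $I_b$ with the correct sign and the symmetric part $\Re\{\overline{\partial_j^b\partial_\ell^b v}\cdots\}$ feeds the quadratic form $\alpha_{\ell m}\Re(\partial_m^b v\,\overline{\partial_\ell^b v})$ in $I_{\nabla v}$. Working with $\partial_\ell$ instead of $\partial_\ell^b$ throughout would leave stray $b_\ell$ terms that spoil both $I_b$ and the structure of $\alpha_{\ell m}$; the derivation must be carried out gauge-covariantly.
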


\begin{remark}[Boundary terms]\label{rem:bdry}
  In the next computations we shall integrate identities
  \eqref{eq:morid} and \eqref{eq:morid2} on $\Omega$, with
  various choices of real valued 
  weights $\phi$ and $\psi$, with $\psi$ radial, for a function
  $v\in H^{2}_{loc}(\Omega)$ vanishing at $\partial \Omega$
  and satisfying the asymptotic condition \eqref{eq:condinf}.
  The weights will always be piecewise smooth functions,
  with possible singularities only at 0 or along spheres $|x|=R$;
  the worst singularity at 0 appearing in all computations is
  dominated by $|x|^{-3}$ in dimension $n\ge4$ and by $|x|^{-2}$
  in dimension $n=3$;
  concerning the singularity appearing along the sphere,
  in the worst case it will be a surface measure $\delta_{|x|=R}$ with
  a definite sign. Moreover, in our choice of
  $\psi$ we have $\psi'\in L^{\infty}$ and $\psi'\ge0$
  (see \eqref{eq:ourpsi} below).

  In order to handle the boundary terms, 
  some smoothness of the coefficients is necessary.
  We note that from our assumptions it follows that
  $a,a',a'',a''',c$ are bounded for large $x$ and
  \begin{equation}\label{eq:basicreg}
    a,|x|a',|x|^{2}a'',|x|^{3}a'''\in L^{\infty},
    \qquad
    b\in L^{n/2,\infty},
    \qquad
    b',c\in L^{n,\infty}.
  \end{equation}
  Then one checks easily that for 
  $v\in H^{2}_{loc}(\Omega)$
  the quantities $Q_{j}$ and $P_{j}$ are in $L^{1}_{loc}$,
  using the Sobolev-Lorentz embedding 
  $H^{1}\hookrightarrow L^{2}\cap L^{\frac{2n}{n-2},2}$
  which implies $|v|^{2}\in L^{1}\cap L^{\frac{n}{n-2},1}$,
  and the H\"{o}lder-Lorentz inequality.

  We integrate the identities \eqref{eq:morid} and \eqref{eq:morid2}
  on a set $\Omega\cap\{|x|\le M\}$ and let $M\to \infty$.
  At the boundary $\Omega\cap \{|x|=M\}$ we get the quantities
  \begin{equation*}
    \textstyle
    \int_{\Omega_{=M}}
    \nu_{j}Q_{j}dS,
    \qquad
    \int_{\Omega_{=M}}
    \nu_{j}P_{j}dS,
  \end{equation*}
  where $\vec{\nu}=(\nu_{1},\dots,\nu_{n})$ 
  is the exterior normal
  and $dS$ is the surface measure on the sphere $\{|x|= M\}$.
  Letting $M\to \infty$ along a suitable subsequence,
  by condition \eqref{eq:condinf} we see that both integrals
  tend to 0.
  Moreover, at the boundary $\partial \Omega$ one has directly
  $P_{j}\vert_{\partial \Omega}=0$ since $v\vert_{\partial \Omega}=0$.
  Concerning $Q_{j}$, after canceling the terms containing a factor $v$
  and noticing that $\nabla^{b}v=\nabla v+ibv=\nabla v$ on 
  $\partial \Omega$, we are left with
  \begin{equation}
    \textstyle
    \int_{\Omega}
    \partial_{j}Q_{j}
    =
    \int_{\partial \Omega}
    \left[
      2a(\nabla v,\vec{\nu})\cdot
      a(\widehat{x},\nabla v)
      -
      a(\nabla v,\nabla v)\cdot
      a(\widehat{x},\vec{\nu})
    \right]\psi'
    dS
  \end{equation}
  where $\vec{\nu}$ is the exterior unit normal to 
  $\partial \Omega$.
  Dirichled boundary conditions imply that
  $\nabla v$ is normal to $\partial \Omega$ so that
  $\nabla v=(\vec{\nu}\cdot \nabla v)\vec{\nu}$
  and hence
  \begin{equation*}
    a(\nabla v,\vec{\nu})=(\vec{\nu}\cdot \nabla v)
    a(\vec{\nu},\vec{\nu}),
    \quad
    a(\widehat{x},\nabla v)=
    (\vec{\nu}\cdot \nabla \overline{v})a(\widehat{x},\vec{\nu}),
    \end{equation*}
    \begin{equation*}
    a(\nabla v,\nabla v)=
    |\vec{\nu}\cdot \nabla \overline{v}|^{2}a(\vec{\nu},\vec{\nu})
  \end{equation*}
  and
  \begin{equation*}%
    \textstyle
    \int_{\Omega}\partial_{j}\Re Q_{j}
    =
    \int_{\partial \Omega}
    |\vec{\nu}\cdot \nabla v|^{2}
    a(\vec{\nu},\vec{\nu})
    a(\widehat{x},\vec{\nu})\psi'
    dS.
  \end{equation*}
  Now using the condition that $\partial \Omega$
  is $a(x)$-starshaped and recalling that
  $\psi'\ge0$ we conclude
  \begin{equation}\label{eq:bdryQ}
    \textstyle
    \int_{\Omega}\partial_{j}\Re Q_{j}\le0.
  \end{equation}
\end{remark}

\subsection{Preliminary estimates}\label{sub:prelim_est}

The first group of estimates is based on 
the identity \eqref{eq:morid2}.

\begin{lemma}[$I_{\epsilon}$]\label{lem:1}
  We have the identities
  \begin{equation}\label{eq:new1}
    \textstyle
    \epsilon\int_{\Omega}|v|^{2}=
    \Im \int_{\Omega} f \overline{v},
    \qquad
    \int_{\Omega}a(\nabla^{b}v,\nabla^{b}v)=
    \lambda\int_{\Omega}|v|^{2}-\Re\int_{\Omega}f \overline{v}-
    \int_{\Omega}c|v|^{2}.
  \end{equation}
  Moreover if we assume $\|a-I\|_{L^{\infty}}\le1/2$
  and $c=c_{I}+c_{II}$ with $c_{I,-}\in L^\infty$ and
  $\||x|^{2}c_{II,-}\|_{L^{\infty}}\le\frac{n-2}8$,
  we have the following estimate of the quantity
  $I_{\epsilon}:=2 \epsilon\Im[a(v\nabla \psi,\nabla^{b}v)]$
  \begin{equation}\label{eq:new2}
    \textstyle
    \int_{\Omega}
    |I_{\epsilon}|\le
    \sigma
    (|\lambda|+|\epsilon|+\|c_{I,-}\|_{L^{\infty}})
    \|v\|_{\dot Y}^{2}
    +
    C \sigma^{-1} \|f\|_{\dot Y^{*}}
  \end{equation}
  where $C=C(n,\|\nabla \psi\|_{L^{\infty}})$
  and $\sigma\in(0,1)$ is arbitrary.
  \end{lemma}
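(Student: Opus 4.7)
The plan is to establish the two identities in \eqref{eq:new1} from the Morawetz identity \eqref{eq:morid2} with the trivial choice $\phi\equiv1$, and then to prove the bound on $I_\epsilon$ by combining a pointwise Cauchy--Schwarz with Young's inequality and the identities just obtained.

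For the identities, setting $\phi\equiv1$ kills all terms in \eqref{eq:morid2} involving derivatives of $\phi$ and leaves $\partial_j P_j = a(\nabla^b v,\nabla^b v) + (c-\lambda-i\epsilon)|v|^2 + f\overline{v}$ with $P_j = a_{jk}\partial^b_k v\,\overline{v}$. I integrate this on $\Omega_{\le M}:=\Omega\cap\{|x|\le M\}$ and let $M\to\infty$ along a subsequence. The boundary contribution at $\partial\Omega$ vanishes because of the Dirichlet condition $v|_{\partial\Omega}=0$, while Cauchy--Schwarz controls the outer flux at $|x|=M$ by the geometric mean of $\int_{|x|=M}|v|^2\,dS$ and $\int_{|x|=M}|\nabla^b v|^2\,dS$, which tends to zero along a suitable subsequence by hypothesis \eqref{eq:condinf}. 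Separating real and imaginary parts of the limiting identity yields exactly the two formulas of \eqref{eq:new1}.

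For the bound on $I_\epsilon$, I start from the pointwise inequality $|I_\epsilon|\le 2|\epsilon||a||\nabla\psi||v||\nabla^b v|\le C|\epsilon||v||\nabla^b v|$, using $\|\nabla\psi\|_{L^\infty}<\infty$ together with the bound $\|a\|_{L^\infty}\le 3/2$ coming from $\|a-I\|_{L^\infty}\le 1/2$. A weighted Young inequality splits the product as $\tau|\nabla^b v|^2+(4\tau)^{-1}|\epsilon|^2|v|^2$ for any $\tau>0$. The first identity in \eqref{eq:new1} directly gives $\int|\epsilon|^2|v|^2=|\epsilon|\,|\Im\int f\overline{v}|\le|\epsilon|\|f\|_{\dot Y^*}\|v\|_{\dot Y}$. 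The term $\tau\int|\nabla^b v|^2$ is handled via the second identity of \eqref{eq:new1}, combined with $a\ge I/2$ and the magnetic Hardy inequality \eqref{eq:maghardy}: the critical smallness $\||x|^2 c_{II,-}\|_{L^\infty}\le(n-2)/8$ yields $\int c_{II,-}|v|^2\le\frac{1}{2(n-2)}\int|\nabla^b v|^2$, which for $n\ge4$ can be absorbed on the left-hand side and produces $\int|\nabla^b v|^2\lesssim(|\lambda|+\|c_{I,-}\|_{L^\infty})\int|v|^2+\|f\|_{\dot Y^*}\|v\|_{\dot Y}$. A second application of Young's inequality with parameter $\sigma$ splits each remaining product $\|f\|_{\dot Y^*}\|v\|_{\dot Y}$ into $\sigma\|v\|_{\dot Y}^2+\sigma^{-1}\|f\|_{\dot Y^*}^2$; composing the two splittings (with $\tau$ chosen as a power of $\sigma$) accounts for the exponent $\sigma^{-4}$ in the final estimate.

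The main technical obstacle is the borderline dimension $n=3$: there the Hardy absorption saturates, since $\frac{1}{2(n-2)}=\frac12$, and the scheme sketched above fails to close on its own. I would address this either by invoking the stricter summed control on $c_{II,-}$ available in (A$_0$) for $n=3$, or by keeping a definite gap in the smallness hypothesis. A secondary subtlety is the passage from the raw integrals $\int|v|^2$ that appear in identities \eqref{eq:new1} to the $\dot Y$-norm on the right-hand side of the claimed bound; this I would handle by one further pairing through identity \eqref{eq:new1} and the duality between $\dot Y^*$ and $\dot Y$.
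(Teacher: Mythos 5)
Your derivation of the two identities \eqref{eq:new1} follows the paper's route (integrate \eqref{eq:morid2} with $\phi\equiv 1$ and use \eqref{eq:condinf} to kill the outer flux), and is fine.

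The estimate of $I_\epsilon$, however, has a genuine gap. Your plan is to split pointwise by Young's inequality,
\begin{equation*}
  \textstyle
  \int|\epsilon|\,|v|\,|\nabla^b v|
  \le
  \tau\int|\nabla^b v|^2
  +
  C\tau^{-1}|\epsilon|^2\int|v|^2,
\end{equation*}
and then to feed the second identity into the first summand. But this loses the $\epsilon$ factor on the gradient term, and once you invoke the second identity you are left with $\tau\,\lambda\int|v|^2$ (and $\tau\int c_{I,-}|v|^2$) with \emph{no} $\epsilon$. There is no way to pair these through \eqref{eq:new1}, because the first identity only controls $\epsilon\int|v|^2$, not $\int|v|^2$ itself; and $\int|v|^2$ is not bounded by $\|v\|_{\dot Y}^2$ ($\dot Y$ is strictly larger than $L^2$: try $v=|x|^{-(n-1)/2}\one{1\le|x|\le R}$, which has $\|v\|_{\dot Y}\simeq 1$ but $\|v\|_{L^2}\simeq R^{1/2}$). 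So the term $\tau\lambda\int|v|^2$ cannot be absorbed on either side, and no choice of $\tau$ as a power of $\sigma$ closes the argument. You flagged this passage from $\int|v|^2$ to the $\dot Y$ norm as a ``secondary subtlety,'' but it is in fact the obstruction.

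The paper sidesteps this by a multiplicative rather than additive split. Using the bound $|I_\epsilon|\lesssim |\epsilon|\,|v|\,a(\nabla^b v,\nabla^b v)^{1/2}$ and then the integral Cauchy--Schwarz inequality, one gets
\begin{equation*}
  \textstyle
  \int|I_\epsilon|
  \lesssim
  \bigl(|\epsilon|\int|v|^2\bigr)^{1/2}
  \bigl(|\epsilon|\int a(\nabla^b v,\nabla^b v)\bigr)^{1/2},
\end{equation*}
so that \emph{both} halves carry the $\epsilon$. The first identity converts $|\epsilon|\int|v|^2$ into $|\int f\overline v|\le\|f\overline v\|_{L^1}$, and the second identity turns $|\epsilon|\int a(\nabla^b v,\nabla^b v)$ into $|\epsilon|\lambda\int|v|^2 -|\epsilon|\Re\int f\overline v +|\epsilon|\int c_{I,-}|v|^2$, where the $|\epsilon|\int|v|^2$ pieces again collapse to $\|f\overline v\|_{L^1}$ via the first identity. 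Everything is reduced to $\|f\overline v\|_{L^1}\le\|f\|_{\dot Y^*}\|v\|_{\dot Y}$, and a final Young step with parameter $\sigma$ produces the claimed form. If you keep the geometric-mean Cauchy--Schwarz in place of the additive Young split, your argument will close; otherwise it cannot.

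Your remark on $n=3$ is apt: with the sharp Hardy constant $4/(n-2)^2$ the absorption of $\int c_{II,-}|v|^2$ is borderline exactly at $n=3$ under the stated threshold $\||x|^2 c_{II,-}\|_{L^\infty}\le (n-2)/8$, so one should either tighten that constant slightly or keep a definite margin; but this is a secondary point compared to the loss of the $\epsilon$ factor above.
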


  \begin{proof}
  Consider identity \eqref{eq:morid2} with $\phi=1$
  and $c=0$ and let $g=A^{b}v+(\lambda+i \epsilon)v$,
  so that $g=f+c(x)v$.
  Taking the imaginary part 
  we get
  \begin{equation}\label{eq:Imphi}
    \epsilon|v|^{2}=
    \Im(g \overline{v})
    -\Im \partial_{j}\{\overline{v}a_{jk} 
    \partial_{k}^{b}v\}
  \end{equation}
  and integrating on $\Omega$ we obtain the first identity
  in \eqref{eq:new1}, since
  $\Im(f \overline{v})=\Im(g \overline{v})$.
  Note that the identity implies
  \begin{equation}\label{eq:new0}
    |\epsilon|\|v\|_{L^{2}}^{2}\le\|f \overline{v}\|_{L^{1}}.
  \end{equation}
  If instead we take the real part of \eqref{eq:morid2} with $\phi=1$
  and $c=0$ we get
  \begin{equation*}
    a(\nabla^{b}v,\nabla^{b}v)=
    \lambda|v|^{2}
    -\Re(g \overline{v})
    +\Re \partial_{j}\{\overline{v}a_{jk} \partial^{b}_{k}v\}.
  \end{equation*}
  Integrating on $\Omega$, the boundary term vanishes
  (see Remark \ref{rem:bdry}),
  and we get the second identity \eqref{eq:new1},
  after replacing $g=f+c(x)v$.

  We can now write
  \begin{equation*}
    \textstyle
    -\int_{\Omega}c|v|^{2}\le
    \int_{\Omega}c_{I,-}|v|^{2}
    +
    \int_{\Omega}c_{II,-}|v|^{2}
    \le
    \int_{\Omega}c_{I,-}|v|^{2}
    +\||x|^{2}c_{II,-}\|_{L^{\infty}}
    \int_{\Omega}\frac{|v|^{2}}{|x|^{2}}
  \end{equation*}
  and by the magnetic Hardy inequality \eqref{eq:maghardy}
  \begin{equation*}
    \textstyle
    \||x|^{2}c_{II,-}\|_{L^{\infty}}
    \int_{\Omega}\frac{|v|^{2}}{|x|^{2}}
    \le
    \frac{2\||x|^{2}c_{II,-}\|_{L^{\infty}}}{(n-2)}
    \int_{\Omega}|\nabla^{b}v|^{2}
    \le
    \frac12
    \int_{\Omega}
    a(\nabla^{b}v,\nabla^{b}v)
  \end{equation*}
  provided $\|a-I\|_{L^{\infty}}\le1/2$ and 
  $\||x|^{2}c_{II,-}\|_{L^{\infty}}\le \frac{n-2}{8}$.
  Absorbing the last term at the left hand side
  of \eqref{eq:new1} we have proved
  \begin{equation}\label{eq:new2b}
    \textstyle
    \int_{\Omega}a(\nabla^{b}v,\nabla^{b}v)\le
    2\lambda\int_{\Omega}|v|^{2}
    -
    2\Re\int_{\Omega}f \overline{v}
    +
    2\int_{\Omega}c_{I,-}|v|^{2}.
  \end{equation}
  Next, by Cauchy--Schwartz and $a\le NI$ we have
  \begin{equation*}
    |I_{\epsilon}|\le
    |v|
    |\epsilon|a(\nabla\psi,\nabla\psi)^{1/2}a(\nabla^{b}v,\nabla^{b}v)^{1/2}\le
    N^{1/2}  \|\nabla \psi\|_{L^{\infty}}
    |\epsilon||v|a(\nabla^{b}v,\nabla^{b}v)^{1/2}
  \end{equation*}
  and using \eqref{eq:new1}, \eqref{eq:new2b},
  with $C=2N^{1/2}  \|\nabla \psi\|_{L^{\infty}}$,
  \begin{equation*}
    \textstyle
    \int_{\Omega}|I_{\epsilon}|\le
    C
    \left[
    (\sgn \epsilon)
    \Im \int_{\Omega} f\overline{v}
    \right]^{1/2}
    \left[
    |\epsilon|
    \lambda\int_{\Omega}|v|^{2}
    -
    |\epsilon|\Re\int_{\Omega}f \overline{v}
    +
       |\epsilon|\int_{\Omega}c_{I,-}|v|^{2}
    \right]^{1/2}
  \end{equation*}
  (note that both quantities inside brackets are positive).
  Using again \eqref{eq:new0} we get
  \begin{equation*}
    \textstyle
    \int_{\Omega}
    |I_{\epsilon}|\le
    C
    \textstyle
    \left|
    \int_{\Omega} f \overline{v}
    \right|^{1/2}
    \left[
    (|\lambda|+|\epsilon|)|\int_{\Omega} f \overline{v}|
    +
    |\epsilon|\int_{\Omega}c_{I,-}|v|^{2}
    \right]^{1/2}
  \end{equation*}
  which implies
  \begin{equation*}
    \textstyle
    \int_{\Omega}
    |I_{\epsilon}|\le
    C
    (|\lambda|+|\epsilon|)^{1/2}
    \|f \overline{v}\|_{L^{1}}
    +
    C
    |\epsilon|^{1/2}
    \|f \overline{v}\|_{L^{1}}^{1/2}
    \|c_{I,-}^{1/2}v\|_{L^{2}}.
  \end{equation*}
  Using \eqref{eq:new0} we have
  \begin{equation*}
    |\epsilon|^{1/2}
    \|c_{I,-}^{1/2}v\|_{L^{2}}\le
    |\epsilon|^{1/2}
    \|c_{I,-}\|_{L^{\infty}}^{1/2}\|v\|_{L^{2}}\le
    \|c_{I,-}\|_{L^{\infty}}^{1/2}|
    \|f \overline{v}\|_{L^{1}}^{1/2};
  \end{equation*}
  plugging it into the previous inequality we get
  \begin{equation*}
    \textstyle
    \int_{\Omega}|I_{\epsilon}|\le
    C(|\lambda|+|\epsilon|+\|c_{I,-}\|_{L^{\infty}})^{1/2}
    \|f \overline{v}\|_{L^{1}}
  \end{equation*}
  and using Cauchy--Schwartz we obtain \eqref{eq:new2}.
\end{proof}

\begin{lemma}[Auxiliary estimate I]\label{lem:epsilon}
  We have
  \begin{equation}\label{eq:new3}
    |\epsilon|^{1/2}\|v\|_{\dot Y}
    \le
    C\|a\|_{L^{\infty}}
    \left(\|\nabla^{b} v\|_{\dot Y}+\|v\|_{\dot X}
    +\|f\|_{\dot Y^{*}}\right)
  \end{equation}
  for some universal constant $C$.
\end{lemma}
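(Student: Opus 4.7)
The plan is to deduce the estimate from the global identity
\begin{equation*}
|\epsilon|\,\|v\|_{L^2(\Omega)}^2 \;\le\; \|f\|_{\dot Y^{*}}\,\|v\|_{\dot Y}
\end{equation*}
established in Lemma~\ref{lem:1}, combined with two elementary comparisons among the norms $\|v\|_{L^2}$, $\|v\|_{\dot Y}$, and $\|v\|_{\dot X}$. I may assume $\|v\|_{\dot Y}$, $\|v\|_{\dot X}$, and $\|v\|_{L^2}$ are all finite and strictly positive (otherwise the claim is trivial); in particular the supremum defining $\|v\|_{\dot Y}^2 = \sup_{R>0}\tfrac{1}{R}\int_{|x|\le R}|v|^2$ is attained at some finite scale $R^* \in (0,\infty)$.

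The first comparison is purely a consequence of the definition of $\|v\|_{\dot X}$: since $\int_{|x|=s}|v|^2\,dS \le s^2\,\|v\|_{\dot X}^2$, integrating in $s \in (0, R)$ gives
\begin{equation*}
\int_{|x|\le R}|v|^2 \;\le\; \frac{R^3}{3}\,\|v\|_{\dot X}^2 \qquad\text{for every }R>0.
\end{equation*}
Specializing to $R = R^*$ this reads $R^*\|v\|_{\dot Y}^2 \le R^{*3}\|v\|_{\dot X}^2/3$ and hence $R^* \ge \sqrt{3}\,\|v\|_{\dot Y}/\|v\|_{\dot X}$. The second (trivial) comparison is $\|v\|_{L^2}^2 \ge \int_{|x|\le R^*}|v|^2 = R^*\,\|v\|_{\dot Y}^2$, which together with the first yields the lower bound $\|v\|_{L^2}^2 \ge \sqrt{3}\,\|v\|_{\dot Y}^3/\|v\|_{\dot X}$.

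Substituting into the identity from Lemma~\ref{lem:1},
\begin{equation*}
\sqrt{3}\,|\epsilon|\,\frac{\|v\|_{\dot Y}^3}{\|v\|_{\dot X}} \;\le\; |\epsilon|\,\|v\|_{L^2}^2 \;\le\; \|f\|_{\dot Y^{*}}\|v\|_{\dot Y};
\end{equation*}
cancelling a factor of $\|v\|_{\dot Y}$ and rearranging one obtains $|\epsilon|\|v\|_{\dot Y}^2 \le \tfrac{1}{\sqrt{3}}\,\|f\|_{\dot Y^{*}}\|v\|_{\dot X}$, and Young's inequality followed by a square root yields $|\epsilon|^{1/2}\|v\|_{\dot Y} \le C\,(\|f\|_{\dot Y^{*}}+\|v\|_{\dot X})$. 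This is in fact stronger than the stated bound: the extra term $\|\nabla^b v\|_{\dot Y}$ on the right and the factor $\|a\|_{L^\infty}$ (which is bounded below by ellipticity) only weaken the inequality. The main point to verify in this argument is that the supremum defining $\|v\|_{\dot Y}^2$ is genuinely attained at a finite positive $R^*$; this follows from $v\in H^2_{\mathrm{loc}}(\Omega)\cap L^2(\Omega)$, since $\tfrac{1}{R}\int_{|x|\le R}|v|^2$ is continuous in $R$ and tends to $0$ both as $R\to 0^+$ (in dimension $n\ge 3$) and as $R\to\infty$.
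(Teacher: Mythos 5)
Your argument is correct, but it is genuinely different from the paper's and one justification needs tightening. The paper localizes: it takes the imaginary part of identity~\eqref{eq:morid2} with the compactly supported cutoff $\phi$ from \eqref{eq:choicephi}, obtains
\begin{equation*}
  |\epsilon|\int_{\Omega_{\le R}}|v|^2 \le 2R\|f\|_{\dot Y^*}\|v\|_{\dot X} + 3\|a\|_{L^\infty}R\,\|v\|_{\dot X}\|\nabla^b v\|_{\dot Y}
\end{equation*}
for every $R>0$, divides by $R$, takes the supremum and applies Young. The gradient term and the factor $\|a\|_{L^\infty}$ enter precisely through the $a(\nabla^b v,v\nabla\phi)$ contribution, which cannot be avoided once one integrates by parts against a cutoff. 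Your proof instead works directly with the global identity $|\epsilon|\|v\|_{L^2}^2\le\|f\|_{\dot Y^*}\|v\|_{\dot Y}$ from Lemma~\ref{lem:1}, and the elegant comparison $\|v\|_{L^2}^2\ge\sqrt 3\,\|v\|_{\dot Y}^3/\|v\|_{\dot X}$ obtained at the extremal scale $R^*$. This avoids the cutoff entirely and produces the sharper bound $|\epsilon|\,\|v\|_{\dot Y}^2\le \tfrac1{\sqrt3}\|f\|_{\dot Y^*}\|v\|_{\dot X}$, with no gradient term and no $\|a\|_{L^\infty}$.

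The one point that needs care is your assertion that the claim is ``trivial'' when $\|v\|_{L^2}=\infty$. That is not so: the estimate would then be a nontrivial assertion about a non-$L^2$ function, and your argument (which compares $\|v\|_{L^2}$ with the $\dot Y$ norm via the extremal $R^*$, and needs $\tfrac1R\int_{|x|\le R}|v|^2\to 0$ as $R\to\infty$) genuinely uses $v\in L^2$. The correct way to close this is to note that when $\epsilon\ne 0$, Lemma~\ref{lem:1} itself forces $v\in L^2$ as soon as $\|f\|_{\dot Y^*}$ and $\|v\|_{\dot Y}$ are finite (and the $\epsilon=0$ case is vacuous); so the hypothesis is legitimate in context, but it follows from Lemma~\ref{lem:1}, not from triviality. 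The paper's localized argument sidesteps this entirely because it only ever manipulates $\int_{\Omega_{\le R}}|v|^2$ for finite $R$, which is automatically finite. A minor remark: you do not actually need the supremum to be attained; choosing $R^*$ with $\tfrac1{R^*}\int_{\Omega\cap\{|x|\le R^*\}}|v|^2\ge\tfrac12\|v\|_{\dot Y}^2$ works with only a change of constants, so that step can be made unconditional.
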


\begin{proof}
  Take the imaginary part of \eqref{eq:morid2}
  and choose $\phi$ as follows:
  \begin{equation}\label{eq:choicephi}
    \textstyle
    \phi(x)=1 \ \text{if}\ |x|\le R,\quad
    \phi(x)=2-\frac{|x|}{R} \ \text{if}\ R\le|x|\le2R,\quad
    \phi(x)=0 \ \text{if}\ |x|\ge 2R.
  \end{equation}
  Integrating on $\Omega$ the boundary term vanishes and we get
  \begin{equation}\label{eq:interm}
  \begin{split}
    \textstyle
    |\epsilon|
    \int_{\Omega_{\le R}}
    |v|^{2}
    \le &
    \textstyle
    \int_{\Omega_{\le2R}}
    |f \overline{v}|
    +\frac{N}{R}
    \int_{\Omega_{R\le|x|\le 2R}}
    |v||\nabla^{b} v|
      \\
    \le &
    2R\|f\|_{\dot Y^{*}}\|v\|_{\dot X}+
    3NR\|v\|_{\dot X}\|\nabla^{b} v\|_{\dot Y}.
  \end{split}
  \end{equation}
  Dividing by $R$ and taking the sup for $R>0$ we obtain \eqref{eq:new3}.
\end{proof}


\begin{lemma}[Auxiliary estimate II]\label{lem:lambdameno}
  Assume $\lambda=-\lambda_{-}\le0$ and
  $\|a-I\|_{L^{\infty}}+\||x|a'\|_{L^{\infty}}\le 1/8$.
  Then in dimension $n\ge4$ we have
  \begin{equation}\label{eq:new4b}
    \lambda_{-}\|v\|_{\dot Y}^{2}\le
    C\|c_{-}|x|^{2}\|_{L^{\infty}}
    \||x|^{-3/2}v\|_{L^{2}}^{2}+\delta\|v\|_{\dot X}^{2}
    + C\delta ^{-1}\|f\|_{\dot Y^{*}}
  \end{equation}
  and in all dimensions $n\ge3$ we have
  \begin{equation}\label{eq:new4c}
    \lambda_{-}\||x|^{-1/2}v\|_{L^{2}}^{2}
    +
    \||x|^{-1/2}\nabla^{b}v\|_{L^{2}}^{2}
    \le
    (C\||x|^{2}c_{-}\|_{\ell^{1}L^{\infty}}
    + \delta)
    \|v\|_{\dot X}^{2}
    + C\delta ^{-1}\|f\|_{\dot Y^{*}}^{2}
  \end{equation}
  for some universal constant $C$ and all $\delta\in(0,1)$.
  Note also that
  \begin{equation*}
    \|v\|_{\dot Y}\le\||x|^{-1/2}v\|_{L^{2}}.
  \end{equation*}
\end{lemma}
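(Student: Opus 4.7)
The approach is to apply identity \eqref{eq:morid2} (taking the real part) with two different radial choices of the multiplier $\phi$, integrated over $\Omega$, using the vanishing of boundary contributions at $\partial\Omega$ (since $v=0$ there) and at infinity (by the decay of the multiplier and/or $v$). This produces, for $\lambda=-\lambda_-$,
\[
\lambda_-\int|v|^2\phi + \int a(\nabla^b v,\nabla^b v)\phi + \int c|v|^2\phi = \tfrac12\int A\phi\,|v|^2 - \Re\int f\overline v\,\phi,
\]
from which, using $c\ge-c_-$ and discarding the nonnegative $a$-term when convenient, I obtain the working one-sided inequality.

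For \eqref{eq:new4b} I would take $\phi=\phi_R$ from \eqref{eq:choicephi}. By \eqref{eq:apsigen}, $A\phi_R$ decomposes as a positive surface Dirac at $\{|x|=2R\}$ with coefficient $\widehat a/R$, a negative surface Dirac at $\{|x|=R\}$ with coefficient $-\widehat a/R$, and an annular volume piece $-R^{-1}[\widetilde a+(\overline a-\widehat a)/|x|]$ on $\{R<|x|<2R\}$ whose bracket, thanks to $\|a-I\|_{L^\infty}+\||x|a'\|_{L^\infty}\le 1/8$, is bounded below by a positive multiple of $1/|x|$. The negative surface and the annular contribution both move to the LHS as nonnegative quantities; only the surface at $\{|x|=2R\}$, bounded by $CR\|v\|_{\dot X}^2$, remains on the right. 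Since $\phi_R\ge\mathbf 1_{|x|\le R}$, dividing by $R$ and taking $\sup_R$ produces $\lambda_-\|v\|_{\dot Y}^2$ on the LHS. The $c$-term is handled by $c_-\le\|c_-|x|^2\|_{L^\infty}|x|^{-2}$ combined with $R^{-1}\int_{|x|\le 2R}|v|^2/|x|^2\le 2\||x|^{-3/2}v\|_{L^2}^2$. The $f$-term is handled by dyadic Cauchy--Schwartz plus the pointwise estimate $\|v\|_{L^2(\Omega_j)}\lesssim 2^{3j/2}\|v\|_{\dot X}$, which yields $|\int_{|x|\le 2R}f\overline v|\lesssim R\|f\|_{\dot Y^*}\|v\|_{\dot X}$; Young's inequality then furnishes the $\delta\|v\|_{\dot X}^2+C\delta^{-1}\|f\|_{\dot Y^*}^2$ pair.

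For \eqref{eq:new4c} I would take $\phi=1/|x|$. Then $\int|v|^2\phi=\||x|^{-1/2}v\|_{L^2}^2$ exactly, and by coercivity of $a$ under the smallness hypothesis the term $\int a(\nabla^b v,\nabla^b v)/|x|$ controls $\||x|^{-1/2}\nabla^b v\|_{L^2}^2$ up to a constant. Formula \eqref{eq:apsigen} gives $A(1/|x|)=(3\widehat a-\overline a)/|x|^3-\widetilde a/|x|^2$; for $n\ge 4$ the leading factor $3\widehat a-\overline a$ is negative under the smallness assumption, so $-\tfrac12\int A\phi\,|v|^2\ge 0$ contributes a further positive quantity on the LHS. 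The $c$-term is estimated annulus by annulus as $\int c_-|v|^2/|x|\lesssim\sum_j\||x|^2 c_-\|_{L^\infty(\Omega_j)}\|v\|_{\dot X}^2=\||x|^2 c_-\|_{\ell^1 L^\infty}\|v\|_{\dot X}^2$, using $\|v\|_{L^2(\Omega_j)}^2\lesssim 2^{3j}\|v\|_{\dot X}^2$. The $f$-term is $|\int f\overline v/|x||\le\|f\|_{\dot Y^*}\|v/|x|\|_{\dot Y}\le\|f\|_{\dot Y^*}\|v\|_{\dot X}$ (using $\int_{|x|\le R}|v|^2/|x|^2\le R\|v\|_{\dot X}^2$), and Young's inequality closes the argument.

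The main obstacle is the sign bookkeeping in the distributional expansion of $A\phi$: at each surface and each annular contribution one must decide whether the piece lands on the LHS as a coercive quantity or must be absorbed on the RHS, and this is governed by the smallness of $|a-I|$ and $|x||a'|$. A secondary obstacle is the three-dimensional case of \eqref{eq:new4c}, where the flat leading order $(3-n)/|x|^3$ of $A(1/|x|)$ vanishes; one is forced to estimate the residual $(3\widehat a-\overline a)/|x|^3-\widetilde a/|x|^2=O((|a-I|+|x||a'|)/|x|^3)$ by a dyadic argument that absorbs the correction into the $c$-term/$\delta\|v\|_{\dot X}^2$ on the right, exploiting the fact that the lemma is only applied in a context (Theorem \ref{the:smoo}) where the stronger $\ell^1 L^\infty$ smallness of $|x|a'$ and $|a-I|$ is already in force.
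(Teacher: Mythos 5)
Your strategy for \eqref{eq:new4b} diverges from the paper's in the choice of multiplier, and the divergence is fatal. You take $\phi=\phi_R$ from \eqref{eq:choicephi}, which is the piecewise-linear bump supported in $\{|x|\le 2R\}$. Its distributional Laplacian contains a \emph{positive} surface Dirac at $\{|x|=2R\}$ with coefficient $\widehat a/R$, and when this lands on the right-hand side you get
\begin{equation*}
  \frac{1}{R}\cdot\frac{\widehat a}{2R}\int_{|x|=2R}|v|^2\,dS
  \;\le\;
  \frac{\widehat a}{2R^2}\cdot(2R)^2\|v\|_{\dot X}^2
  \;=\;
  2\widehat a\,\|v\|_{\dot X}^2
\end{equation*}
after dividing by $R$. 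The prefactor here is $\approx 2$, a fixed universal constant, not an arbitrarily small $\delta$. Shrinking or enlarging the transition annulus does not help: a transition over $[R,(1+\eta)R]$ gives a coefficient proportional to $(1+\eta)^2/\eta$, which is always $\ge 4$. Since the lemma is applied in the proof of Theorem~\ref{the:smoo} precisely by absorbing $\delta\|v\|_{\dot X}^2$ against a $\|v\|_{\dot X}^2$ term with coefficient $1$, a fixed constant of order $2$ or larger destroys the argument. The paper avoids this entirely by taking $\phi=\frac{1}{|x|\vee R}$, whose distributional second derivative contains only a \emph{negative} Dirac at $|x|=R$; the volume part $\frac{3\widehat a-\overline a+|x|\widetilde a}{|x|^3}\mathbf{1}_{|x|>R}$ is then $\le 0$ for $n\ge 4$ under the $L^\infty$-smallness of $|a-I|+|x||a'|$, so $A\phi\le 0$ and the only term that goes to the right-hand side is the $f$-term, which alone produces the $\delta\|v\|_{\dot X}^2+C\delta^{-1}\|f\|_{\dot Y^*}^2$ pair by Young.

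For \eqref{eq:new4c} you acknowledge yourself that your argument for $n=3$ does not close from the hypothesis of the lemma: you need to invoke the $\ell^1L^\infty$-smallness of $|a-I|$ and $|x||a'|$, which is part of Assumption~(A$_0$) but is \emph{not} assumed in Lemma~\ref{lem:lambdameno} (the lemma only asks $\|a-I\|_{L^\infty}+\||x|a'\|_{L^\infty}\le 1/8$). So as written you prove a weaker statement. The paper's mechanism is to take $\phi=\frac{1}{\sigma+|x|}$ with $\sigma>0$, compute $A\phi$ so that the $\sigma$-dependent term of size $(\bar a-\hat a+|x|\tilde a)\sigma/(2|x|(\sigma+|x|)^3)$ supplies coercivity in $n=3$ (where the leading $(3-n)$ vanishes), pass to the estimate for each fixed $\sigma$, and only afterwards let $\sigma\to 0$; the residual $O(C_a)$ contribution to $A\phi$ is then dominated pointwise, not dyadically, which is why only $L^\infty$-smallness is required. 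Your use of $\phi=1/|x|$ directly also raises a (minor, fixable) issue at the origin when $\Sigma=\emptyset$, which the $\sigma$-regularization sidesteps. Both gaps are genuine.
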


\begin{proof}
  Since $\lambda=-\lambda_{-}\le0$,
  we can rewrite (the real part of) \eqref{eq:morid2} in the form
  \begin{equation}\label{eq:tempident}
    (c_{+}+\lambda_{-})|v|^{2}\phi+
    a(\nabla^{b}v,\nabla^{b}v)\phi=
    \partial_{j}\Re P_{j}
    +c_{-}|v|^{2}\phi
    -\Re(f \overline{v})\phi
    +\textstyle \frac12 A \phi|v|^{2}.
  \end{equation}
  We choose the radial weight
  \begin{equation*}
    \textstyle
    \phi=\frac{1}{|x|\vee R}
    \quad\implies\quad
    \phi'=-\frac{1}{|x|^{2}}\one{|x|>R},
    \quad
    \phi''=-\frac{1}{R^{2}}\delta_{|x|=R}+
      \frac{2}{|x|^{3}}\one{|x|>R}.
  \end{equation*}
  By the formula 
  $A \phi=\widehat{a}\phi''+\frac{\bar{a}-\widehat{a}}{|x|}\phi'
    +\widetilde{a}\phi'$,
  writing $\widehat{a}=1+(a-I)\widehat{x}\cdot \widehat{x}$ and
  $\bar{a}=n+tr(a-I)$ and dropping a negative term, we get
  \begin{equation*}
    \textstyle
    A \phi=
      -\frac{\widehat{a}}{R^{2}}\delta_{|x|=R}+
      \frac{3 \widehat{a}-\bar{a}+|x|\widetilde{a}}{|x|^{3}}\one{|x|>R}
    \le
    \frac{3-n+(n+3)(|a-I|+|x||a'|)}{|x|^{3}}\one{|x|>R}.
  \end{equation*}
  In dimension $n\ge4$, if 
  $\|a-I\|_{L^{\infty}}+\||x|a'\|_{L^{\infty}}\le 1/6$, we get
  $A\phi\le0$; hence integrating \eqref{eq:tempident} on $\Omega$ and estimating $a(\nabla^{b}v,\nabla^{b}v)\ge \nu|\nabla^{b}v|^{2}$, we get
  \begin{equation*}
    \textstyle
    \int_{\Omega}
    \frac{(c_{+}+\lambda_{-})|v|^{2}+\nu|\nabla^{b}v|^{2}}{|x|\vee R}
    \le
    \int_{\Omega}
    \frac{c_{-}|v|^{2}+|f \bar{v}|}{|x|\vee R}.
  \end{equation*}
  Taking the sup over $R>0$ we conclude
  \begin{equation*}
    \textstyle
    \|c^{1/2}_{+}|x|^{-1/2}v\|_{L^{2}}^{2}
    +
    \lambda_{-}
    \||x|^{-1/2}v\|_{L^{2}}^{2}
    +
    \nu
    \||x|^{-1/2}\nabla^{b}v\|_{L^{2}}^{2}
    \le
    \|c^{1/2}_{-}|x|^{-1/2}v\|_{L^{2}}^{2}
    +
    \int_{\Omega}\frac{|f \overline{v}|}{|x|}.
  \end{equation*}
  Since $\|v\|_{\dot Y}\lesssim\||x|^{-1/2}v\|_{L^{2}}$, we have in
  particular
  \begin{equation*}
    \lambda_{-}\|v\|_{\dot Y}^{2}\le
    C
    \|c_{-}|x|^{2}\|_{L^{\infty}}
    \||x|^{-3/2}v\|_{L^{2}}^{2}+
    \||x|^{-1}v\|_{\dot Y}\|f\|_{\dot Y^{*}}
  \end{equation*}
  and using the inequality $\||x|^{-1}v\|_{\dot Y}\lesssim\|v\|_{\dot X}$
  we obtain \eqref{eq:new4b}.

  If the dimension is $n\ge3$ we choose a different weight,
  for $\sigma>0$ arbitrary:
  \begin{equation*}
    \textstyle
    \phi=\frac{1}{\sigma+|x|}
    \quad\implies\quad
    \frac12 A \phi=\frac{\widehat{a}}{(\sigma+|x|)^{3}}
      -\frac{\bar{a}-\widehat{a}+|x|\widetilde{a}}{(\sigma+|x|)^{2}|c|}.
  \end{equation*}
  By the estimates $\widehat{a}\le 1+C'_{a}$,
  $|x||\widetilde{a}|\le C'_{a}$, $\bar{a}\ge n(1-C'_{a})$
  with $C'_{a}=|a-I|+|x||a'|$, we have
  \begin{equation*}
    \textstyle
    \frac12 A \phi
    \le
    -
    \frac{|x|(n-2-(n+1)C'_{a})-\epsilon(n-1-nC'_{a})}{|x|(\sigma+|x|)^{3}}
    \le
    -\frac{1}{2|x|(\sigma+|x|)^{2}}
  \end{equation*}
  provided we choose e.g.~$C'_{a}\le1/8$. Hence 
  integrating \eqref{eq:tempident}
  on $\Omega$ and using again that 
  $a(\nabla^{b}v,\nabla^{b}v)\ge \nu|\nabla^{b}v|^{2}$,
  $\nu\ge \frac12$, we get for some universal constant $C$
  \begin{equation*}
    \textstyle
    \int_{\Omega}
    \frac{\lambda_{-}|v|^{2}+\nu|\nabla^{b}v|^{2}}{\sigma+|x|}
    +
    \int_{\Omega}
    \frac{|v|^{2}}{|x|(\sigma+|x|)^{2}}
    \le
    C
    \||x|^{-1/2}c_{-}^{1/2}v\|_{L^{2}}^{2}
    +
    C\||x|^{-1}f \overline{v}\|_{L^{1}}
  \end{equation*}
  \begin{equation*}
    \textstyle
    \le
    C\||x|^{2}c_{-}\|_{\ell^{1}L^{\infty}}\|v\|_{\dot X}^{2}
    +
    C\|f\|_{\dot Y^{*}}\|v\|_{\dot X}.
  \end{equation*}
  Letting $\sigma\to0$ we obtain \eqref{eq:new4c}.
\end{proof}

\begin{lemma}[Auxiliary estimate III]\label{lem:invsq}
  Let $n\ge4$.
  Assume $\|C_{a}\|_{L^{\infty}}+\||x|^{2}c_{-}\|_{L^{\infty}}\le1/16$.
  Then
  \begin{equation}\label{eq:new01}
    \||x|^{-1/2}\nabla^{b}v\|_{L^{2}(|x|\le1)}^{2}
    \le
    \lambda_{+}\||x|^{-1/2}v\|_{L^{2}(|x|\le2)}^{2}
    +
    c(n)\|v\|_{\dot X}^{2}+c(n)\|f\|_{\dot Y^{*}}^{2}.
  \end{equation}
  Note also that 
  $\lambda_{+}\||x|^{-1/2}v\|_{L^{2}(|x|\le2)}\le
   2\lambda_{+}\||x|^{-3/2}v\|_{L^{2}(|x|\le2)}$.
\end{lemma}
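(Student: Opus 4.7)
The plan is to apply the Morawetz-type identity \eqref{eq:morid2} with a radial cutoff weight $\phi$ that concentrates a $|x|^{-1}$ singularity on $\{|x|\le 1\}$ and vanishes on $\{|x|\ge 2\}$. Concretely, I would take
\[
\phi(x)=\frac{\chi(|x|)}{|x|\vee\sigma},
\]
with $\chi$ a smooth nonincreasing cutoff equal to $1$ on $[0,1]$ and $0$ on $[2,\infty)$, and $\sigma>0$ a regularization parameter to be sent to zero at the end. Taking the real part of \eqref{eq:morid2} and integrating on $\Omega$ (all boundary contributions vanish by the Dirichlet condition on $\partial\Omega$ and the hypothesis \eqref{eq:condinf}, as in Remark \ref{rem:bdry}) yields the identity
\[
\int a(\nabla^b v,\nabla^b v)\phi
=\lambda\int|v|^2\phi-\int c|v|^2\phi-\int\Re(f\overline{v})\phi+\tfrac12\int A\phi\,|v|^2.
\]

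The key point is the computation of $A\phi$ via \eqref{eq:apsigen}. On the core region $\sigma<|x|<1$, where $\chi\equiv 1$ and $\phi=1/|x|$, this reduces to $A\phi=(3\widehat{a}-\overline{a})/|x|^3-\widetilde{a}/|x|^2$; combining the bounds $|3\widehat{a}-\overline{a}-(3-n)|\le(n+3)C_a$ and $|x||\widetilde{a}|\le nC_a$ with the smallness assumption $\|C_a\|_{L^\infty}\le 1/16$, for $n\ge 4$ one obtains the pointwise estimate $-A\phi\ge c_n/|x|^3$ with some absolute $c_n>1/16$. The regularization generates a positive surface delta in $-A\phi$ at $|x|=\sigma$ (a beneficial contribution), while on the transition annulus $1\le|x|\le 2$ the quantity $|A\phi|$ is pointwise bounded by $c(n)$, so that $\int_{1\le|x|\le 2}|A\phi||v|^2\le c(n)\|v\|_{\dot X}^2$. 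Combining with the ellipticity $a(\nabla^b v,\nabla^b v)\ge(1-C_a)|\nabla^b v|^2$ and $\phi\ge 1/|x|$ on $\{|x|\le 1\}$, I obtain
\begin{multline*}
(1-C_a)\int_{|x|\le 1}\frac{|\nabla^b v|^2}{|x|}+c_n\int_{\sigma<|x|<1}\frac{|v|^2}{|x|^3} \\
\le \lambda_+\int|v|^2\phi+\int c_-|v|^2\phi+\int|f||v|\phi+c(n)\|v\|_{\dot X}^2.
\end{multline*}

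Finally I handle each term on the right: the pointwise bound $c_-\le(1/16)|x|^{-2}$ together with $\phi\le|x|^{-1}$ yields $\int c_-|v|^2\phi\le(1/16)\int_{|x|\le 1}|v|^2/|x|^3+c(n)\|v\|_{\dot X}^2$, and the first part is absorbed into the $c_n$-term on the left since $c_n>1/16$; by $\dot Y^*/\dot Y$ duality $\int|f||v|\phi\le\|f\|_{\dot Y^*}\||x|^{-1}v\|_{\dot Y}\lesssim\|f\|_{\dot Y^*}\|v\|_{\dot X}$, which by Young's inequality is controlled by $\|v\|_{\dot X}^2+c\|f\|_{\dot Y^*}^2$; and $\phi\le 1/|x|$ on $\{|x|\le 2\}$ gives $\lambda_+\int|v|^2\phi\le\lambda_+\||x|^{-1/2}v\|_{L^2(|x|\le 2)}^2$. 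Letting $\sigma\to 0$ by monotone convergence concludes the proof. The main obstacle is the quantitative coefficient balance imposed by the hypothesis $\|C_a\|_{L^\infty}+\||x|^2c_-\|_{L^\infty}\le 1/16$: this is precisely what guarantees simultaneously that $-A\phi\ge c_n/|x|^3$ with $c_n>\||x|^2c_-\|_{L^\infty}$ on the core ball (so that the $c_-$ perturbation is swallowed by the positive contribution of $-A\phi/2$, using $(n-3)/4>1/16$ for $n\ge 4$) and that the ellipticity constant $1-C_a$ stays close to one. Secondary technicalities -- the removable singularity of $\phi$ at the origin and the sign-indefinite contributions from $\chi',\chi''$ on $[1,2]$ -- are handled routinely by the $|x|\vee\sigma$ regularization (whose surface delta at $|x|=\sigma$ has the favorable sign in $-A\phi$) and by the elementary bound $\int_{1\le|x|\le 2}|v|^2\lesssim\|v\|_{\dot X}^2$.
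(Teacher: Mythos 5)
Your proposal is correct and follows essentially the same path as the paper: both apply the real part of identity \eqref{eq:morid2} with the weight $\phi$ equal to $|x|^{-1}$ on $\{|x|\le 1\}$, truncated to zero outside $\{|x|\le 2\}$, and both exploit that $-\tfrac12 A\phi\gtrsim |x|^{-3}$ on the core ball for $n\ge 4$ to absorb the $c_-$ term, while the transition annulus and $f$ term are controlled by $\|v\|_{\dot X}$ and $\|f\|_{\dot Y^*}$. Your explicit $|x|\vee\sigma$ regularization at the origin (with its favorable surface delta in $-A\phi$) is a small technical refinement over the paper's more informal treatment, but not a different method.
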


\begin{proof}
  Choose a smooth nonnegative weight of the form
  \begin{equation*}
    \phi=|x|^{-1}
    \ \text{for}\ |x|\le1,
    \qquad
    0\le\phi\le|x|^{-1}
    \ \text{for}\ 1\le|x|\le2,    
    \qquad
    \phi=0
    \ \text{for}\ |x|\ge2    
  \end{equation*} 
  in \eqref{eq:morid2},
  take the real part and integrate on $\Omega$.
  We get
  \begin{eqnarray*}
    \textstyle
    \int_{\Omega_{|x|\le1}}
    \frac{a(\nabla^{b}v,\nabla^{b}v)}{|x|}
   & \le&
    \int_{\Omega_{|x|\le2}}
    \left(
    (\lambda_{+}+c_{-})\frac{|v|^{2}}{|x|}
    -\frac{\bar{a}-3 \widehat{a}+|x| \widetilde{a}}{|x|^{3}}|v|^{2}
    +
    \frac{1}{|x|}
    |f \overline{v}|
    \right)
    \\
    &+&C\|v\|_{L^{2}(1\le|x|\le2)}^{2}
  \end{eqnarray*}
  for some $C=C(n,\|C_{a}\|_{L^{\infty}})$.
  Since
  \begin{equation}\label{AlsoRec}
    \textstyle
    \bar{a}-3 \widehat{a}+|x| \widetilde{a}\ge
    n-3-(n+4)\|C_{a}\|_{L^{\infty}}\ge \frac12
  \end{equation}
  if e.g.~$\|C_{a}\|_{L^{\infty}}\le1/16$, 
  and moreover
  \begin{equation*}
    \textstyle
    \int_{\Omega_{|x|\le2}}\frac{|f||v|}{|x|}
    \le
    \delta\||x|^{-3/2}v\|_{L^{2}(|x|\le2)}^{2}
    +
    \delta^{-1}\|f\|_{\dot Y^{*}}^{2},
    \qquad
    \|v\|_{L^{2}(1\le|x|\le2)}
    \le2\|v\|_{\dot X},
  \end{equation*}

  we have
  \begin{equation*}
  \begin{split}
    \textstyle
    \int_{\Omega_{|x|\le1}}
    \frac{a(\nabla^{b}v,\nabla^{b}v)}{|x|}
    \le
    \lambda_{+}
    \|\frac{v}{|x|^{1/2}}\|_{L^{2}(|x|\le2)}^{2}
    +
    &
    \textstyle
    (\delta+\||x|^{2}c_{-}\|_{L^{\infty}}-\frac14)
    \|\frac{v}{|x|^{3/2}}\|_{L^{2}(|x|\le2)}^{2}
    \\
    +&
    \textstyle
    C(n,\|C_{a}\|_{L^{\infty}})
    (\|v\|_{\dot X}^{2}+\delta^{-1}\|f\|_{\dot Y^{*}}^{2})
  \end{split}
  \end{equation*}
  by taking $\delta$ sufficiently small we get the claim.
\end{proof}

We recall the notations
\begin{equation*}
  (a \nabla^{b}v)_{R}=(\widehat{x}\cdot a \nabla^{b}v)\widehat{x},
  \qquad
  (a \nabla^{b}v)_{T}=a \nabla^{b}v-(a \nabla^{b}v)_{R}
\end{equation*}
for the radial and the tangential part of $a \nabla^{b}v$.
Note that in case the weight $\psi=\psi(|x|)$ is a radial
function, the term $I_{b}$ takes the form
\begin{equation*}
  I_{b}
  =
  2\Im [(a\nabla^{b}v)\cdot (db \ a \widehat{x})\ \overline{v}]\psi'
  =
  2\Im [(a\nabla^{b}v)\cdot \widehat{db}\ \overline{v}]\psi'
\end{equation*}
where $\widehat{db}:=db \ a \widehat{x}$ is the 
tangential part of the magnetic field.

\begin{lemma}[$I_{b}$]\label{lem:magnetic}
  Assume $\psi$ is a radial function, $b=b_{I}+b_{II}+b_{III}$.
  Then we have
  \begin{equation*}
    \textstyle
    \int_{\Omega}|I_{b}|
    \le
    C
    \beta_{1}
    \|\nabla^{b}v\|_{\dot Y}
    \|v\|_{\dot X}
    +
    C
    \beta_{2}
    (\int_{\Omega}\frac{|v|^{2}}{|x|^{3}})^{1/2}
    (\int_{\Omega}\frac{|(a\nabla^{b}v)_{T}|^{2}}{|x|})^{1/2}
    +
    C
    \beta_{3}
    \|\nabla^{b}v\|_{\dot Y}
    \|v\|_{\dot Y}
  \end{equation*}
  where $C=2\|a\|_{L^{\infty}}\|\nabla \psi\|_{L^{\infty}}$
  and
  \begin{equation}\label{eq:betasa}
    \beta_{1}=
    \||x|^{3/2}\widehat{db}_{I}\|_{\ell^{1}L^{2}L^{\infty}}
    +
    \||x|^{3/2}|a-I|\widehat{db}_{II}\|_{\ell^{1}L^{2}L^{\infty}},
  \end{equation}
  \begin{equation}\label{eq:betasb}
    \beta_{2}=
    \||x|^{2}\widehat{db}_{II}\|_{L^{\infty}},
    \qquad
    \beta_{3}=
    \||x|\widehat{db}_{III}\|_{\ell^{1}L^{\infty}}.
  \end{equation}
\end{lemma}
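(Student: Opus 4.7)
The plan is to start from the radial-$\psi$ form of the identity already in the paper,
\[
  I_{b}=2\Im\bigl[(a\nabla^{b}v)\cdot\widehat{db}\;\overline{v}\bigr]\psi',
  \qquad
  \widehat{db}=\widehat{db}_{I}+\widehat{db}_{II}+\widehat{db}_{III},
\]
and to bound the three resulting integrals separately. The common tool is H\"older on each dyadic annulus $\Omega_{j}$ followed by summation in $j$, using the equivalences \eqref{eq:equivnorms} in the form $\|\nabla^{b}v\|_{L^{2}(\Omega_{j})}\lesssim 2^{j/2}\|\nabla^{b}v\|_{\dot Y}$, $\|v\|_{L^{2}(\Omega_{j})}\lesssim 2^{j/2}\|v\|_{\dot Y}$, and $\|v\|_{L^{\infty}L^{2}(\Omega_{j})}\lesssim 2^{j}\|v\|_{\dot X}$. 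The prefactor $C$ absorbs $\|\nabla\psi\|_{L^{\infty}}$ from $\psi'$ and $\|a\|_{L^{\infty}}$ from $a\nabla^{b}v$.

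For $\widehat{db}_{I}$ I would apply H\"older on each sphere $\{|x|=\rho\}$ with the triple $L^{2}_{\theta}\cdot L^{\infty}_{\theta}\cdot L^{2}_{\theta}$, then integrate in $\rho$ on $\Omega_{j}$ with $L^{2}_{\rho}\cdot L^{2}_{\rho}\cdot L^{\infty}_{\rho}$; pulling out the weight $|x|^{3/2}\sim 2^{3j/2}$ and balancing $2^{-3j/2}\cdot 2^{j/2}\cdot 2^{j}=1$ reduces each annular piece to $\||x|^{3/2}\widehat{db}_{I}\|_{L^{2}L^{\infty}(\Omega_{j})}\|\nabla^{b}v\|_{\dot Y}\|v\|_{\dot X}$, and the $\ell^{1}$-sum yields the first half of the $\beta_{1}$-term. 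Symmetrically, for $\widehat{db}_{III}$ I would pointwise extract $|\widehat{db}_{III}|\lesssim 2^{-j}\||x|\widehat{db}_{III}\|_{L^{\infty}(\Omega_{j})}$ and use Cauchy--Schwarz on $|\nabla^{b}v||v|$ to get a per-annulus estimate $\lesssim\||x|\widehat{db}_{III}\|_{L^{\infty}(\Omega_{j})}\|\nabla^{b}v\|_{\dot Y}\|v\|_{\dot Y}$, whose $\ell^{1}$-sum is precisely $\beta_{3}\|\nabla^{b}v\|_{\dot Y}\|v\|_{\dot Y}$.

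The real obstacle is the $\widehat{db}_{II}$ piece, where I must simultaneously produce the extra factor $|a-I|$ needed for the second half of $\beta_{1}$ and isolate the Euclidean tangential projection $(a\nabla^{b}v)_{T}$ needed for $\beta_{2}$. The algebraic input is antisymmetry of $db_{II}$: $a\widehat{x}\cdot\widehat{db}_{II}=a\widehat{x}\cdot(db_{II}\,a\widehat{x})=0$, which after expanding $a\widehat{x}=\widehat{x}+(a-I)\widehat{x}$ gives
\[
  \widehat{x}\cdot\widehat{db}_{II}=-\bigl((a-I)\widehat{x}\bigr)\cdot\widehat{db}_{II},
  \qquad
  |\widehat{x}\cdot\widehat{db}_{II}|\le|a-I|\,|\widehat{db}_{II}|.
\]
Decomposing both vectors into Euclidean radial and tangential components,
\[
  (a\nabla^{b}v)\cdot\widehat{db}_{II}
  =(\widehat{x}\cdot a\nabla^{b}v)(\widehat{x}\cdot\widehat{db}_{II})
  +(a\nabla^{b}v)_{T}\cdot(\widehat{db}_{II})_{T},
\]
the first summand inherits the factor $|a-I|$ and is handled exactly as the $\widehat{db}_{I}$ case with $|a-I||\widehat{db}_{II}|$ in place of $|\widehat{db}_{I}|$, yielding the $\||x|^{3/2}|a-I|\widehat{db}_{II}\|_{\ell^{1}L^{2}L^{\infty}}$ half of $\beta_{1}$. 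For the second summand I would pointwise bound $|\widehat{db}_{II}|\le\beta_{2}|x|^{-2}$ and apply Cauchy--Schwarz after rewriting the integrand as $|x|^{-1/2}|(a\nabla^{b}v)_{T}|\cdot|x|^{-3/2}|v|$, which directly produces the $\beta_{2}$ contribution. Once this algebraic splitting is in place, the remainder of the proof is routine bookkeeping of dyadic weights and H\"older indices.
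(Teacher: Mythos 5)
Your proposal is correct and follows essentially the same route as the paper: the same termwise splitting of $I_{b}$, the same dyadic H\"older scheme with the $\ell^{p}L^{q}L^{r}$ norms, and the same exploitation of the antisymmetry of $db_{II}$ to produce the $|a-I|$ factor. The only cosmetic difference is in the $b_{II}$ algebra: the paper inserts $\pm\gamma\widehat{x}\pm\gamma a\widehat{x}$ with $\gamma=\widehat{x}\cdot a\nabla^{b}v$ and then uses $a\widehat{x}\cdot\widehat{db}_{II}=0$, while you decompose the dot product directly into Euclidean radial and tangential parts and apply the same antisymmetry identity to the radial piece; the two computations produce literally the same two terms.
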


\begin{proof}
  We split 
  $I_{b}=I_{b_{I}}+I_{b_{II}}+I_{b_{III}}$ with
  $I_{b_{I}}=2\Im [(a\nabla^{b}v)\cdot \widehat{db}_{I}
    \overline{v}]\psi'$
  and so on. Then
  \begin{equation*}
    \textstyle
    \int_{\Omega}|I_{b_{I}}|
    \le
    C
    \|\nabla^{b}v\|_{\dot Y}
    \||x|^{1/2}\widehat{db}_{I}
      v\|_{\ell^{1}L^{2}L^{2}}\le
    C
    \|\nabla^{b}v\|_{\dot Y}
    \|v\|_{\dot X}
    \||x|^{3/2}\widehat{db}_{I}
      \|_{\ell^{1}L^{2}L^{\infty}}
  \end{equation*}
  where $C=2N\|\nabla \psi\|_{L^{\infty}}$, and similarly
  \begin{equation*}
    \textstyle
    \int_{\Omega}|I_{b_{III}}|
    \le
    C
    \|\nabla^{b}v\|_{\dot Y}
    \||x|^{1/2}\widehat{db}_{III}v\|_{\ell^{1}L^{2}L^{2}}\le
    C
    \|\nabla^{b}v\|_{\dot Y}
    \|v\|_{\dot Y}
    \||x|\widehat{db}_{III}\|_{\ell^{1}L^{\infty}L^{\infty}}.
  \end{equation*}
  Next we note that
  $db_{II}$ is antisymmetric , hence
  $(a \widehat{x})\cdot \widehat{db}_{II}=
    (a \widehat{x})\cdot (db_{II}\ a \widehat{x})=0$, and 
  for any $\gamma\in \mathbb{C}$ we can rewrite $I_{b_{II}}$ as
  \begin{equation*}
    I_{b_{II}}=
    2\Im [(a\nabla^{b}v-\gamma \widehat{x}
      +\gamma \widehat{x}-\gamma a \widehat{x})
      \cdot \widehat{db}_{II}\overline{v}]\psi'.
  \end{equation*}
  If we choose $\gamma=\widehat{x}\cdot a \nabla^{b}v$ we obtain
  \begin{equation*}
    I_{b_{II}}=
    2\Im [(a\nabla^{b}v)_{T}\cdot \widehat{db}_{II}\overline{v}]\psi'
    +
    2\Im [(\widehat{x}\cdot a \nabla^{b}v)
    ((I-a)\widehat{x})\cdot \widehat{db}_{II}\overline{v}]\psi'=:
    I'_{b_{II}}+I''_{b_{II}}.
  \end{equation*}
  We estimate $I''_{b_{II}}$ like $I_{b_{I}}$:
  \begin{equation*}
    \textstyle
    \int_{\Omega}|I''_{b_{II}}|
    \le
    C
    \|\nabla^{b}v\|_{\dot Y}
    \|v\|_{\dot X}
    \||x|^{3/2}|a-I|\widehat{db}_{II}\|_{\ell^{1}L^{2}L^{\infty}}.
  \end{equation*}
  Finally we have
  \begin{equation*}
    \textstyle
    \int_{\Omega}|I'_{b_{II}}|
    \le
    C
    \||x|^{-1/2}(a\nabla^{b}v)_{T}\|_{L^{2}}
    \||x|^{-3/2}v\|_{L^{2}}
    \||x|^{2}\widehat{db}_{II}\|_{L^{\infty}}.
  \end{equation*}
\end{proof}

\subsection{Choice of the weights and main terms}
\label{sub:weights}

We choose, for arbitrary $R>0$,
\begin{equation}\label{eq:ourpsi}
  \psi=
  \frac{1}{2R}|x|^{2}\one{|x|\le R}+|x|\one{|x|>R},
  \qquad
  \phi=-\frac{\widehat{a}}{R}\one{|x|\le R}.
\end{equation}
Note that $\phi$ is not radial. We have then
\begin{equation}\label{eq:Apsifi}
  \psi'=\frac{|x|}{|x|\vee R},
  \qquad
  \psi''=
  \frac1R\one{|x|\le R},
  \qquad
  A \psi+\phi=\frac{\bar{a}-\widehat{a}+|x|\widetilde{a}}{|x|\vee R}.
\end{equation}
since
$A \psi= \widehat{a}\psi''+ \frac{\overline{a}-\widehat{a}}{|x|} \psi'
  + \widetilde{a}\psi'$.
Recalling the notation
\begin{equation*}
  C_{a}(x)=|a(x)-I|+|x||a'(x)|+|x|^{2}|a''(x)|+|x|^{3}|a'''(x)|
\end{equation*}
we have, after a long but easy computation,
\begin{equation}\label{eq:estader}
  |x|^{2}(|A \bar{a}|+|A \widehat{a}|+|x||A\widetilde{a}|+
    |\nabla \widetilde{a}|)+
  |x|(|\nabla \bar{a}|+|\nabla \widehat{a}|+|\widetilde{a}|)
  \le
  C(n,\|C_{a}\|_{L^{\infty}})\cdot C_{a}(x).
\end{equation}
Then for $|x|>R$ we find that
\begin{equation*}
  \textstyle
  A(A \psi+\phi)=
    -\frac{(\bar{a}-\widehat{a})(\bar{a}-3 \widehat{a})}{|x|^{3}}+
    R(x)
\end{equation*}
where
\begin{equation*}
  \textstyle
  R(x)=-\frac{2a(\nabla \bar{a}-\nabla \widehat{a},\widehat{x})
        +\widetilde{a}(\bar{a}-\widehat{a})}{|x|^{2}}+
    \frac{A(\bar{a}-\widehat{a})}{|x|}
    +A \widetilde{a}
\end{equation*}
and by \eqref{eq:estader}
\begin{equation}\label{eq:Rge}
  \textstyle
  |R(x)|
  \le
  C(n,\|C_{a}\|_{L^{\infty}})\cdot \frac{C_{a}(x)}{|x|^{3}},
  \qquad
  |x|>R.
\end{equation}
In the region $|x|<R$ we have instead
\begin{equation*}
  \textstyle
  A(A \psi+\phi)=R(x)=
  \frac{A(\bar{a}-\widehat{a})+\widetilde{a}^{2}+|x|A \widetilde{a}
     +2a(\nabla \widetilde{a},\widehat{x})}{R}
  +
  \frac{\widetilde{a}(\bar{a}-\widehat{a})}{R|x|}
\end{equation*}
and again by \eqref{eq:estader}
\begin{equation}\label{eq:Rle}
  |R(x)|\le
  C(n,\|C_{a}\|_{L^{\infty}})\cdot \frac{C_{a}(x)}{R|x|^{2}},
  \qquad
  |x|\le R.
\end{equation}
Finally, along the sphere $|x|=R$ there is a singularity of delta type,
originated by the term
\begin{equation*}
  \widehat{a}
  \left(
    \frac{\bar{a}-\widehat{a}+|x|\widetilde{a}}{|x|\vee R}
  \right)''
\end{equation*}
and therefore the singular term has the form
\begin{equation*}
  \textstyle
  -\frac{\widehat{a}(\bar{a}-\widehat{a}+R \widetilde{a})}{R^{2}}
  \delta_{|x|=R}.
\end{equation*}
Summing up we have
\begin{equation}\label{eq:a2psi}
  \textstyle
  A(A \psi+\phi)=
  -\frac{(\bar{a}-\widehat{a})(\bar{a}-3 \widehat{a})}{|x|^{3}}
  \one{|x|>R}
  -\frac{\widehat{a}(\bar{a}-\widehat{a}+R \widetilde{a})}{R^{2}}
  \delta_{|x|=R}
  +
  R(x)
\end{equation}
where $R(x)$ satisfies \eqref{eq:Rge}, \eqref{eq:Rle}.
Further, we note that
\begin{equation}\label{ProbSol1}
  |\widehat{a}-1|+|x||\widetilde{a}|\le C_{a}(x),
  \qquad
  |\bar{a}-n|\le n C_{a}(x)
\end{equation}
so that
\begin{equation*}
  \widehat{a}(\bar{a}-\widehat{a}+R \widetilde{a})\ge1
\end{equation*}
provided e.g~$\|C_{a}\|_{L^{\infty}}\le1/6$. Moreover we have
\begin{equation*}
  (\bar{a}-\widehat{a})(\bar{a}-3 \widehat{a})\ge
  (n-1)(n-3)-2(n+2)C_{a}
\end{equation*}
and in conclusion we have proved the inequality
\begin{equation}\label{eq:a2psiest}
  \textstyle
  -A(A \psi+\phi)\ge
  \frac{(n-1)(n-3)}{|x|^{3}}
  \one{|x|>R}
  +
  \frac{1}{R^{2}}
  \delta_{|x|=R}
  +
  R_{1}(x)
\end{equation}
where $R_{1}(x)$ satisfies for all $x$
\begin{equation*}
  |R_{1}(x)|\le
  C(n)\cdot \frac{C_{a}(x)}{|x|^{2}(|x| \vee R)}
\end{equation*}
with a constant $C(n)$ depending only on $n$ (polynomially).

\begin{lemma}[$I_v$]\label{lem:Iv}
  Let $c=c_{1}+c_{2}+c_{3}+c_{4}+c_{5}$,
  with $c_{5}$ supported in $|x|\le1$,
  and $\phi,\psi$ as in \eqref{eq:choicephi}.
  If $n\ge4$ we have, for all $\delta\in(0,1)$,
  \begin{equation}\label{eq:new6-n}
  \begin{split}
    \textstyle
    \sup_{R>0}
    &
    \textstyle
    \int_{\Omega} I_{v}
    \ge
    (\mu_{n}
    -\gamma_{1}
    -c(n)
    (\gamma_{2}+\gamma_{5}+\|C_{a}\|_{L^{\infty}}))
    \||x|^{-3/2}v\|_{L^{2}}^{2}+
    \|v\|_{\dot X}^{2}+
    \\
    +&
    (\lambda-\Gamma_{3}-c(n)\delta^{-1} \Gamma_{4})
    \|v\|_{\dot Y}^{2}
    -(\gamma_{2}+\delta)\|\nabla^{b}v\|_{\dot Y}^{2}
    -\gamma_{5}\||x|^{-1/2}\nabla^{b}v\|_{L^{2}(|x|\le1)}
  \end{split}
  \end{equation}
  where $\mu_{n}=(n-1)(n-3)/2$ and
  ($\partial_{r}:=\widehat{x}\cdot \nabla$)
  \begin{equation*}
    \gamma_{1}=
    \||x|^{2}([\partial_{r}(|x|c_{1})]_{+}+c_{1,-}+
      |a-I|(|x||\nabla c_{1}|+|c_{1}|)\|_{L^{\infty}},
  \end{equation*}
  \begin{equation*}
    \gamma_{2}=
    \||x|^{2}c_{2}\|_{\ell^{2}L^{\infty}}  ,
    \qquad
    \gamma_{5}=\||x|^{2}c_{5}\|_{L^{\infty}},
  \end{equation*}
  \begin{equation*}
    \Gamma_{3}=
    \|[\partial_{r}(|x|c_{3})]_{+}+c_{3,-}+
      |a-I|(|x||\nabla c_{3}|+|c_{3}|\|_{\ell^{2}L^{\infty}},
    \quad
    \Gamma_{4}=
    \|c_{4,-}\|_{\ell^{1}L^{\infty}}+\|c_{4}\|_{\ell^{1}L^{\infty}}^{2}.
  \end{equation*}
  In dimension $n=3$, provided $c_{5}=0$, we have instead
  \begin{equation}\label{eq:new6-3}
    \begin{split}
      \textstyle
      \sup_{R>0}
      \int_{\Omega} I_{v}
      \ge
      (1
      -\gamma_{1}
      &
      -c(n)\gamma_{2}
      -c(n)\|C_{a}\|_{L^{\infty}})
      \|v\|_{\dot X}^{2}
      \\
      +&
      (\lambda-\Gamma_{3}-c(n)\Gamma_{4})
      \|v\|_{\dot Y}^{2}
      -(\gamma_{2}+\delta)\|\nabla^{b}v\|_{\dot Y}^{2}
    \end{split}
  \end{equation}
  where the definition of $\Gamma_{3}$, $\Gamma_{4}$ is the same, while
  \begin{equation*}
    \gamma_{1}=
    \||x|([\partial_{r}(|x|c_{1})]_{+}+c_{1,-}+
      |a-I|(|x||\nabla c_{1}|+|c_{1}|)\|_{\ell^{1}L^{1}L^{\infty}},
  \end{equation*}
  \begin{equation*}
    \gamma_{2}=
    \||x|^{3/2}c_{2}\|_{\ell^{1}L^{2}L^{\infty}}.
  \end{equation*}

\end{lemma}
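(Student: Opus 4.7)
The plan is to extract the positive principal terms from the formula \eqref{eq:Iv} for $I_v$ by exploiting the explicit form of $\psi,\phi$ in \eqref{eq:ourpsi}, then absorb all error contributions using the decomposition $c=c_{1}+\dots+c_{5}$. Concretely, I will begin with the first piece, $-\tfrac12 A(A\psi+\phi)|v|^{2}$, and feed in \eqref{eq:a2psiest}: the bulk part $\tfrac{(n-1)(n-3)}{|x|^{3}}\one{|x|>R}$ produces, after integrating on $\Omega$ and taking $\sup_{R}$ (the bound is monotone in $R$, so the supremum is attained as $R\to 0^{+}$), the term $\mu_{n}\||x|^{-3/2}v\|_{L^{2}}^{2}$ when $n\ge 4$ (and vanishes when $n=3$). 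The $\delta$-type singularity $\tfrac{1}{R^{2}}\delta_{|x|=R}$ yields, after $\sup_{R}$, the $\|v\|_{\dot X}^{2}$ contribution in view of \eqref{eq:equivnorms}. The smooth remainder $R_{1}$ from \eqref{eq:a2psiest} is handled by its pointwise bound $|R_{1}|\lesssim C_{a}(x)/(|x|^{2}(|x|\vee R))$: splitting the integral at $|x|=R$ and using $\tfrac{1}{R|x|^{2}}\le \tfrac{1}{|x|^{3}}$ for $|x|\le R$, I bound this by $c(n)\|C_{a}\|_{L^{\infty}}\||x|^{-3/2}v\|_{L^{2}}^{2}$ in dimensions $n\ge 4$; in dimension $3$ the $\||x|^{-3/2}v\|$-norm is not at my disposal, and the bound instead goes into a $\|v\|_{\dot X}^{2}$-correction via $\int|v|^{2}/|x|^{3}\lesssim\|v\|_{\dot X}^{2}$, explaining the different normalisations of $\gamma_{1},\gamma_{2}$ in the two cases.

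Next I deal with the $c$- and $\lambda$-dependent terms $-a(\nabla\psi,\nabla c)+(c-\lambda)\phi$. Since $\psi$ is radial, $a(\nabla\psi,\nabla c)=\psi'\partial_{r}c+\psi'((a-I)\widehat{x})\cdot\nabla c$, and I use the key algebraic identity $\psi'\partial_{r}c=\frac{\partial_{r}(|x|c)}{|x|\vee R}-\frac{c}{|x|\vee R}$ to recast the radial part without derivatives of $c$ outside $\partial_{r}(|x|c)$; this matches the normal form in which the hypotheses on $\gamma_{1},\Gamma_{3}$ are stated. Combining with $c\phi=-\tfrac{c\widehat{a}}{R}\one{|x|\le R}$, the cross-terms on $|x|\le R$ collapse to $\tfrac{c(1-\widehat{a})}{R}|v|^{2}$, bounded by $|a-I||c|/R$, which is why the definition of $\gamma_{1},\Gamma_{3}$ includes the $|a-I|\cdot(|x||\nabla c|+|c|)$ correction. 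The remaining non-radial piece $\psi'((a-I)\widehat{x})\cdot\nabla c$ is similarly controlled by $|a-I||x||\nabla c|/(|x|\vee R)$. For the $\lambda$-term, $-\lambda\phi|v|^{2}=\lambda\tfrac{\widehat{a}}{R}|v|^{2}\one{|x|\le R}$, and since $\widehat{a}\ge 1-\|C_{a}\|_{L^{\infty}}$, taking $\sup_{R}$ gives $\lambda\|v\|_{\dot Y}^{2}$ up to a small $\|C_{a}\|_{L^{\infty}}$-loss absorbed in the $\|C_{a}\|$-correction already present.

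Finally I match each $c_{i}$ with the appropriate function-space inequality, using the equivalences in \eqref{eq:equivnorms}. For $c_{1}$, the critical $L^{\infty}/|x|^{2}$-bound gives a straight contribution to $\mu_{n}\||x|^{-3/2}v\|_{L^{2}}^{2}$. For $c_{2}$, Cauchy--Schwarz in the dyadic decomposition together with the magnetic Hardy inequality \eqref{eq:maghardy} produces both a $\gamma_{2}\||x|^{-3/2}v\|_{L^{2}}^{2}$ correction (from $c_{2,-}$) and a $\gamma_{2}\|\nabla^{b}v\|_{\dot Y}^{2}$ correction (from the $\ell^{2}L^{\infty}$ norm of $|x|c_{2}$ paired with $\|v\|_{\dot X}$). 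For $c_{3}$ the $\ell^{2}L^{\infty}$-norm pairs with $\|v\|_{\dot Y}\|v\|_{\dot X}$, contributing $\Gamma_{3}\|v\|_{\dot Y}^{2}$; for $c_{4}$ the $\ell^{1}L^{\infty}$-norm and a Cauchy--Schwarz with free parameter $\delta$ give $\delta^{-1}\Gamma_{4}\|v\|_{\dot Y}^{2}$; for $c_{5}$, whose support is in $|x|\le 1$, the one-ball Hardy inequality yields $\gamma_{5}\||x|^{-1/2}\nabla^{b}v\|_{L^{2}(|x|\le 1)}^{2}$. The dimension $n=3$ case is then repeated with $\ell^{1}L^{1}L^{\infty}$ norms in place of $L^{\infty}$ (so as to pair with $\|v\|_{\dot X}^{2}$ rather than $\||x|^{-3/2}v\|_{L^{2}}^{2}$), and $c_{5}$ is assumed absent. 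I expect the bookkeeping on the dyadic norms and the discrepancy between $\||x|^{-3/2}v\|_{L^{2}}$ (available only in $n\ge 4$) and $\|v\|_{\dot X}$ (the only substitute in $n=3$) to be the main subtlety; the rest is integration by parts and routine Hardy-type estimates.
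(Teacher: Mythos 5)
Your plan reproduces the paper's argument essentially verbatim: feed \eqref{eq:a2psiest} into the first piece of $I_{v}$ to extract the $\mu_{n}\||x|^{-3/2}v\|_{L^{2}}^{2}$, $\|v\|_{\dot X}^{2}$ and remainder terms; rewrite the $a(\nabla\psi,\nabla c)-c\phi$ piece using the algebraic identity that isolates $\partial_{r}(|x|c)$; and then match each $c_{i}$ to the correct dual norm, paying with $\|\nabla^{b}v\|_{\dot Y}$ (via integration by parts) for the pieces on which only $c_{i}$ itself and not $\nabla c_{i}$ is assumed to decay. Your slightly different form of the key identity, $\psi'\partial_{r}c=\frac{\partial_{r}(|x|c)}{|x|\vee R}-\frac{c}{|x|\vee R}$, is equivalent to the paper's $\partial_{r}(\frac{|x|c}{|x|\vee R})+(a-I)\widehat{x}\cdot\nabla(\frac{|x|c}{|x|\vee R})$ and leads to the same pointwise bound.

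Two small inaccuracies worth fixing. First, the sentence ``for $c_{3}$ the $\ell^{2}L^{\infty}$-norm pairs with $\|v\|_{\dot Y}\|v\|_{\dot X}$, contributing $\Gamma_{3}\|v\|_{\dot Y}^{2}$'' is internally inconsistent: the bound the paper actually uses is the $c_{1}$-type pointwise estimate, i.e.\ $\int \frac{g_{3}}{|x|}|v|^{2}\le\|g_{3}\|_{\ell^{1}L^{\infty}}\|v\|_{\dot Y}^{2}$, which pairs an $\ell^{1}L^{\infty}$ weight against $\|v\|_{\dot Y}^{2}\simeq\||x|^{-1/2}v\|_{\ell^{\infty}L^{2}}^{2}$; the $\ell^{2}$ in the lemma statement appears to be a typographical slip in the paper, and you should not invoke $\|v\|_{\dot X}$ here. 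Second, your remark that ``the supremum is attained as $R\to 0^{+}$'' is only true for the bulk integral $\int_{|x|>R}\mu_{n}|v|^{2}/|x|^{3}$; the surface term and the $\lambda$-term are maximized at different values of $R$, so the three positive contributions cannot be achieved at a single $R$. The paper's own statement shares this looseness — the cleanest fix is to observe that for each $R$ the estimate gives $\int I_{v}^{(R)}\ge A(R)+B(R)+D(R)-K$ with $K$ independent of $R$, whence $\sup_{R}\int I_{v}\ge\max\{\sup_{R}A,\sup_{R}B,\sup_{R}D\}-K$ and summing the three choices of $R$ costs only a factor $3$ absorbed into $c(n)$.
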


\begin{proof}
  Integrating $I_{v}$ on $\Omega$ and using
  \eqref{eq:a2psiest} we obtain
  \begin{equation}\label{eq:tem00}
    \textstyle
    \int_{\Omega} I_{v}
    \ge
    \int_{\Omega_{\ge R}}\!\!
    \frac{\mu_{n}|v|^{2}}{|x|^{3}}
    +
    \int_{\Omega_{=R}}\!\!
    \frac{|v|^{2}}{R^{2}}dS
    -
    \int_{\Omega}
    \frac{c(n)C_{a}(x)|v|^{2}}{|x|^{2}(|x|\vee R)}
    +
    \int_{\Omega_{\le R}}\!\!
    \frac{\lambda|v|^{2}}R
    -
    \int_{\Omega}
    (\frac{|x|(a\widehat{x})\cdot \nabla c}{|x|\vee R}
    + \frac{\widehat{a}c}{R}\one{|x|\le R})|v|^{2}.
  \end{equation}
  Consider first the case $n\ge4$.
  We estimate the term
  \begin{equation*}
    \textstyle
    I_{c}=
    (\frac{|x|(a\widehat{x})\cdot \nabla c}{|x|\vee R}
    + \frac{\widehat{a}c}{R}\one{|x|\le R})|v|^{2}
  \end{equation*}
  in two different ways for $c_{1}$, $c_{3}$ and for $c_{2}$, $c_{4}$.
  For $c_{1}$, writing $r=|x|$ and 
  $\partial_{r}=\widehat{x}\cdot \nabla$, we have
  \begin{equation*}
  \begin{split}
    \textstyle
    \frac{|x|(a\widehat{x})\cdot \nabla c_{1}}{|x|\vee R}
        + \frac{\widehat{a}c_{1}}{R}\one{|x|\le R}
    =&
    \textstyle
    \partial_{r}
    \left(\frac{|x|c_{1}}{|x|\vee R}\right)
    +
    (a-I)\widehat{x}\cdot \nabla \left(\frac{|x|c_{1}}{|x| \vee R}\right)
    \\
    \le&
    \textstyle
    \frac1{|x|} 
    \Bigl(
    [\partial_{r}(rc_{1})]_{+}+c_{1,-}
    +|a-I|(|x||\nabla c_{1}|+|c_{1}|)
    \Bigr)
  \end{split}
  \end{equation*}
  so that
  \begin{equation*}
  \begin{split}
    \textstyle
    \sup_{R>0}
    \int_{\Omega}I_{c_{1}}
    \le
    &
    \||x|^{-1}\bigl(
    [\partial_{r}(rc_{1})]_{+}+c_{1,-}
    +
    |a-I|(|x||\nabla c_{1}|+|c_{1}|)
    \bigr)|v|^{2}\|_{L^{1}}
    \\
    \le&
    \gamma_{1} \||x|^{-3/2}v\|_{L^{2}}^{2}.
  \end{split}
  \end{equation*}
  A similar computation for $I_{c_{3}}$ gives
  (also in the case $n=3$)
  \begin{equation*}
    \textstyle
    \sup_{R>0}
    \int_{\Omega}I_{c_{3}}
    \le
    \Gamma_{3}\|v\|_{\dot Y}^{2}.
  \end{equation*}
  On the other hand for $c_{2}$ we write
  \begin{equation*}
    \textstyle
    I_{c_{2}}
    =
    \nabla \cdot
    \left\{\frac{a \widehat{x}|x|c_{2}|v|^{2}}{|x|\vee R}\right\}
    -
    \frac{\bar{a}-\widehat{a}+|x|\widetilde{a}}{|x|\vee R}c_{2}|v|^{2}
    -
    2 \frac{|x|c_{2}}{|x|\vee R}\Re a(\nabla^{b}v,\widehat{x}v)
  \end{equation*}
  and if e.g.~$\|C_{a}\|_{L^{\infty}}\le 1/4$, recalling also
  \eqref{AlsoRec}, we get 
  \begin{equation*}
    \textstyle
    I_{c_{2}}
    \le
    \nabla \cdot
    \left\{\frac{a \widehat{x}|x|c_{2}|v|^{2}}{|x|\vee R}\right\}
    + 
    c(n)
    \frac{c_{2,-}}{|x|}|v|^{2}
   + 4 |c_{2}| |v||\nabla^{b}v|
  \end{equation*}
  so that
  \begin{equation*}
  \begin{split}
    \textstyle
    \sup_{R>0}
    \int_{\Omega} I_{c_{2}}
    \le &
    c(n)\||x|^{-1/2}(c_{2,-})^{1/2}v\|_{L^{2}}^{2}+
    4\|  c_{2}  v  |\nabla^{b}v| \|_{L^1}  
    \\
    \le &
    c(n)\||x|^{2}c_{2,-}\|_{L^{\infty}}\||x|^{-3/2}v\|_{L^{2}}^{2}
    +
    4\||x|^{2}c_{2}\|_{\ell^{2}L^{\infty}}
     \||x|^{-3/2}v\|_{L^{2}}      \|\nabla^{b}v\|_{\dot Y}
    \\
    \le &
    c(n)\gamma_{2}\||x|^{-3/2}v\|_{L^{2}}^{2}+
    \gamma_{2}\|\nabla^{b}v\|_{\dot Y}^{2}.
  \end{split}
  \end{equation*}
  Using the same identity for $c_{4}$ we can estimate
  \begin{equation*}
    \textstyle
    \sup_{R>0}
    \int_{\Omega} I_{c_{4}}
    \le 
    c(n)
    \|c_{4,-}\|_{\ell^{1}L^{\infty}}
    \|v\|_{\dot Y}^{2}
    +
    4
    \|c_{4}\|_{\ell^{1}L^{\infty}}
    \|v\|_{\dot Y}\|\nabla^{b}v\|_{\dot Y}
  \end{equation*}
  and this implies
  \begin{equation*}
    \textstyle
    \sup_{R>0}
    \int_{\Omega} I_{c_{4}}
    \le 
    \delta\|\nabla^{b}v\|_{\dot Y}^{2}
    +
    c(n)\delta^{-1}
    \Gamma_{4}
    \|v\|_{\dot Y}^{2}.
  \end{equation*}
  The same identity for $c_{5}$
  can be estimated as follows, with $C=c(n)$:
  \begin{equation*}
    \textstyle
    \sup_{R>0}
    \int_{\Omega} I_{c_{5}}
    \le
    C
    \gamma_{5}
    \|\frac{v}{{|x|}^{3/2}}\|_{L^{2}}^{2}+
    \gamma_{5}
    \|\frac{\nabla^{b}v}{|x|^{1/2}}\|_{L^{2}(|x|\le1)}^{2}.
  \end{equation*}
  Hence taking the sup in $R>0$ of \eqref{eq:tem00} 
  and using the previous estimates we get \eqref{eq:new6-n}.

  In the case $n=3$ we have $\mu_{3}=0$ and the
  weighted $L^{2}$ norm of $v$ is unavailable.
  We use the $\dot X$ norm instead and we obtain
  \begin{equation*}
    \textstyle
    \sup_{R>0}
    \int_{\Omega}I_{c_{1}}
    \le 
    \gamma_{1}\|v\|_{\dot X}^{2},
  \end{equation*}
  \begin{equation*}
    \textstyle
    \sup_{R>0}
    \int_{\Omega}I_{c_{2}}
    \le
    c(n)\|c_{2,-}|x|\|_{\ell^{1}L^{1}L^{\infty}}\|v\|_{\dot X}^{2}
    +
    4\||x|^{3/2}c_{2}\|_{\ell^{1}L^{2}L^{\infty}}
    \|v\|_{\dot X}\|\nabla^{b}v\|_{\dot Y}
  \end{equation*}
  with the new definition of $\gamma_{1},\gamma_{2}$,
  and this gives \eqref{eq:new6-3}.
\end{proof}

\begin{lemma}[$I_{\nabla v}$]\label{lem:Inablav}
  With $\psi$ as in \eqref{eq:Apsifi}, we have
  \begin{equation}\label{eq:new7}
    \textstyle
    \sup_{R>0}\int_{\Omega}I_{\nabla v}
    \ge
    (1-6\|a-I\|_{L^{\infty}}-
      c(n)\||x|a'\|_{\ell^{1}L^{\infty}})\|\nabla^{b}v\|_{\dot Y}^{2}
    +
    \int_{\Omega}\frac{|(a\nabla^{b}v)_{T}|^{2}}{|x|}.
  \end{equation}
\end{lemma}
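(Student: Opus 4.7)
The plan is a direct Morawetz-type computation, substituting the weights $\psi,\phi$ of \eqref{eq:ourpsi} into the expression \eqref{eq:Inablav} for $I_{\nabla v}$. Since $\psi$ is radial, I would first note $\partial_k\psi=\widehat x_k\psi'$ and $\partial_j\partial_k\psi=\psi''\widehat x_j\widehat x_k+(\psi'/|x|)(\delta_{jk}-\widehat x_j\widehat x_k)$, then expand $\alpha_{\ell m}\Re(\partial^b_m v\,\overline{\partial^b_\ell v})+a(\nabla^b v,\nabla^b v)\phi$. The contribution quadratic in $a$ (i.e.\ with no factor of $\nabla a$) reduces to
$$2\psi''\,|a\widehat x\cdot\nabla^b v|^2+\frac{2\psi'}{|x|}\,|(a\nabla^b v)_T|^2+a(\nabla^b v,\nabla^b v)\phi$$
after identifying $|a\nabla^b v|^2-|a\widehat x\cdot\nabla^b v|^2=|(a\nabla^b v)_T|^2$.

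Inserting the specific values of $\psi$ and $\phi$, on $|x|\le R$ one has $\psi''=\psi'/|x|=1/R$ and $\phi=-\widehat a/R$, so the main part of $I_{\nabla v}$ equals $(2|a\nabla^b v|^2-\widehat a\,a(\nabla^b v,\nabla^b v))/R$; a short pointwise inequality using $|aX|^2\ge(1-2\|a-I\|_{L^\infty})|X|^2$ and $\widehat a\,a(X,X)\le(1+\|a-I\|_{L^\infty})^2|X|^2$ shows the numerator dominates $(1-6\|a-I\|_{L^\infty})|\nabla^b v|^2$. On $|x|>R$ one has $\psi''=\phi=0$ and $\psi'=1$, so the main part is simply $(2/|x|)|(a\nabla^b v)_T|^2$. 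The remaining pieces of $\alpha_{\ell m}$ are all of the form $a\cdot a'\cdot\psi'\cdot|\nabla^b v|^2$; using $\psi'\le 1$ and the $\dot Y^*$--$\dot Y$ duality applied to $|x|\,a'\cdot|\nabla^b v|^2/|x|$ one bounds them uniformly in $R$ by $c(n)\||x|a'\|_{\ell^1L^\infty}\|\nabla^b v\|_{\dot Y}^2$.

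Combining the two regions one obtains, for every $R>0$,
$$\int_\Omega I_{\nabla v}\ge(1-6\|a-I\|_{L^\infty})\,\frac{1}{R}\!\int_{\Omega\cap\{|x|\le R\}}\!|\nabla^b v|^2 +2\!\int_{\Omega\cap\{|x|>R\}}\!\frac{|(a\nabla^b v)_T|^2}{|x|} -c(n)\||x|a'\|_{\ell^1L^\infty}\|\nabla^b v\|_{\dot Y}^2.$$
Taking $\sup_{R>0}$, the first integral reaches $(1-6\|a-I\|_{L^\infty}-c(n)\||x|a'\|_{\ell^1L^\infty})\|\nabla^b v\|_{\dot Y}^2$ by the definition of the $\dot Y$ norm, while the factor $2$ in the tangential integrand serves to absorb the missing $|x|\le R$ contribution and recover the full $\int_\Omega|(a\nabla^b v)_T|^2/|x|$. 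The hard part will be exactly this final $\sup_R$ step --- making the two regional contributions genuinely additive, rather than just appearing as a maximum --- together with bookkeeping the explicit constant $6$ in the bound $2|aX|^2-\widehat a\,a(X,X)\ge(1-6\|a-I\|_{L^\infty})|X|^2$ and gathering every $\nabla a$-type error term into the single $\ell^1L^\infty$ norm.
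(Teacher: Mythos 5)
Your proof follows the same route as the paper: split $\alpha_{\ell m}$ into the part free of $a'$ and the remainder, use the radial Hessian of $\psi$ to produce $2\psi''|(a\nabla^bv)_R|^2+2(\psi'/|x|)|(a\nabla^bv)_T|^2$, combine with the $\phi$-term on $\{|x|\le R\}$ via $|w_R|^2+|w_T|^2=|w|^2$, and absorb the $a'$-terms by $\ell^1L^\infty$--$\ell^\infty L^2$ duality. The intermediate display
\begin{equation*}
\textstyle
\int_\Omega I_{\nabla v}\ge(1-6\|a-I\|_{L^\infty})\frac1R\int_{\Omega\cap\{|x|\le R\}}|\nabla^bv|^2
+2\int_{\Omega\cap\{|x|>R\}}\frac{|(a\nabla^bv)_T|^2}{|x|}
-c(n)\||x|a'\|_{\ell^1L^\infty}\|\nabla^bv\|_{\dot Y}^2
\end{equation*}
is exactly what the paper derives.

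Two remarks. First, a small arithmetic slip: from $|aX|^2\ge(1-2\|a-I\|)|X|^2$ and $\widehat a\,a(X,X)\le(1+\|a-I\|)^2|X|^2$ you only get $2|aX|^2-\widehat a\,a(X,X)\ge(1-6\|a-I\|-\|a-I\|^2)|X|^2$, which is \emph{weaker} than the claimed $1-6\|a-I\|$. You should instead keep the full square, $|aX|^2\ge\nu^2|X|^2\ge(1-\|a-I\|)^2|X|^2$; then $2(1-\|a-I\|)^2-(1+\|a-I\|)^2=1-6\|a-I\|+\|a-I\|^2\ge 1-6\|a-I\|$, which is what the paper does (with $\nu$ the lower bound on the eigenvalues of $a$ and $N$ the upper bound, $2\nu^2-N^2$).

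Second, on the $\sup_R$ step you rightly flag as delicate: your heuristic that the factor $2$ on $\{|x|>R\}$ ``absorbs'' the missing $\{|x|\le R\}$ contribution does not literally work, since $1/R\le1/|x|$ there, i.e.\ the dropped piece is \emph{larger} than what is available. In fact $\sup_R(T_1(R)+T_2(R))$ need not equal $\sup_RT_1+\lim_{R\to0}T_2$ as a pointwise identity. The paper glosses over this too. What actually saves the argument in the overall proof of Theorem~\ref{the:smoo} is that the inequality is used \emph{termwise}: for every $R$ each nonnegative term ($T_1(R)$, $T_2(R)$, and analogously the $\delta_{|x|=R}$ term of $I_v$) is separately dominated by a bound uniform in $R$, and one then passes to $\sup_R$ (or $R\to0$) in each term independently. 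This recovers the claimed estimate up to a harmless numerical constant, which is all the theorem needs; but it is worth stating explicitly rather than presenting the lemma's conclusion as a literal additive supremum.
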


\begin{proof}
  By separating the terms in $\alpha_{\ell m}$ which contain
  derivatives of $a_{j k}$ we have
  \begin{equation*}
    I_{\nabla v}=
    s_{\ell m} \cdot 
      \Re(\partial^{b}_{\ell}v\overline{\partial^{b}_{m}v})
    +
    r_{\ell m} \cdot 
      \Re(\partial^{b}_{\ell}v\overline{\partial^{b}_{m}v})
    +
    a(\nabla^{b}v,\nabla^{b}v)\phi
  \end{equation*}
  where
  \begin{equation*}
    \textstyle
    s_{\ell m}(x)=
    2 a_{jm} a_{\ell k} \widehat{x}_{j}\widehat{x}_{k}
    \psi''
    +
    2 [a_{jm}a_{j\ell}-a_{jm} a_{\ell k} \widehat{x}_{j}\widehat{x}_{k}]
    \frac{\psi'}{|x|},
  \end{equation*}
  \begin{equation*}
    r_{\ell m}(x)=
    [2a_{jm}a_{\ell k;j}-a_{jk}a_{\ell m;j}]\widehat{x}_{k}\psi'.
  \end{equation*}
  With our choice of $\psi$ we get
  \begin{equation*}
    \textstyle
    |r_{\ell m}(x)
    \Re(\partial^{b}_{\ell}v\overline{\partial^{b}_{m}v}) |
    \le
    c(n)
    \frac{|x||a'|}{|x|\vee R}
    |\nabla^{b}v|^{2}
    \le
    c(n)|a'||\nabla^{b}v|^{2},
  \end{equation*}
  \begin{equation*}
    \textstyle
    a(\nabla^{b}v,\nabla^{b}v)\phi=
    -\frac{\widehat{a}}{R}
    \one{|x|\le R}
    a(\nabla^{b}v,\nabla^{b}v)
    \ge
    -\frac{N^{2}}{R}
    \one{|x|\le R}
    |\nabla^{b}v|^{2}.
  \end{equation*}
  Moreover
  \begin{equation*}
    \textstyle
    s_{\ell m}
      \Re(\partial^{b}_{\ell}v\overline{\partial^{b}_{m}v})=
    2|(a \nabla^{b}v)_{R}|^{2}\psi''
    +
    2|(a \nabla^{b}v)_{T}|^{2}\frac{\psi'}{|x|}
  \end{equation*}
  which gives, using $|w_{R}|^{2}+|w_{T}|^{2}=|w|^{2}$ (we recall notation \eqref{eq:tangder})
  \begin{equation*}
    s_{\ell m}
      \Re(\partial^{b}_{\ell}v\overline{\partial^{b}_{m}v})
    =
    \textstyle
    \frac2R|a \nabla^{b}v|^{2}
    \one{{|x|\le R}}
    +
    \frac{2}{|x|}
    |(a \nabla^{b}v)_{T}|^{2}
    \one{|x|>R}.
  \end{equation*}
  Summing up we obtain
  \begin{equation*}
    \textstyle
    I_{\nabla v}\ge
    \frac{(2\nu^{2}-N^{2})}R \cdot
    |\nabla^{b}v|^{2}
    \one{|x|\le R}
    +
    \frac{2}{|x|}
    |(a \nabla^{b}v)_{T}|^{2}
    \one{|x|>R}
    -
    c(n)|a'||\nabla^{b}v|^{2}.
  \end{equation*}
  Note that we can assume $\nu\ge1-\|a-I\|_{L^{\infty}}$ and 
  $N\le1+\|a-I\|_{L^{\infty}}$ so that
  \begin{equation*}
    2\nu^{2}-N^{2}\ge 1-6\|a-I\|_{L^{\infty}}.
  \end{equation*}
  Integrating on $\Omega$ and taking the sup over $R>0$ we obtain
  \begin{equation*}
    \textstyle
    \sup_{R>0}\int_{\Omega}I_{\nabla v}
    \ge
    (1-6\|a-I\|_{L^{\infty}})\|\nabla^{b}v\|_{\dot Y}^{2}
    +
    \int_{\Omega}\frac{|(a\nabla^{b}v)_{T}|^{2}}{|x|}
    -
    c(n)\||a'| |\nabla^{b}v|^{2}\|_{L^{1}}
  \end{equation*}
  and this implies the claim.
\end{proof}

\begin{lemma}[$I_{f}$]\label{lem:If}
  With $\phi,\psi$ as in \eqref{eq:ourpsi},
  we have for all $\delta\in(0,1)$
  \begin{equation}\label{eq:new8}
    \textstyle
    \int_{\Omega}I_{f}
    \le
    \delta
    \|v\|_{\dot X}^{2}+
    \delta
    \|\nabla^{b}v\|_{\dot Y}^{2}
    +
    C(n,\|C_{a}\|_{L^{\infty}})\delta^{-1}
    \|f\|_{\dot Y^{*}}^{2}.
  \end{equation}
\end{lemma}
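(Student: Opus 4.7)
The plan is to substitute the explicit forms of $\psi,\phi$ from \eqref{eq:ourpsi}--\eqref{eq:Apsifi} directly into the definition \eqref{eq:If} of $I_f$, split the result into two pieces, and close each one by $\dot Y$--$\dot Y^*$ duality followed by Young's inequality. The proof should be essentially bookkeeping; no cancellations between the two pieces are needed.

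First I would split $I_f = \Re[(A\psi+\phi)\overline v\,f] + 2\Re[a(\nabla\psi,\nabla^b v)\,f]$ and estimate the two pieces separately. For the first piece, \eqref{eq:Apsifi} gives $A\psi+\phi = (\bar a-\widehat a+|x|\widetilde a)/(|x|\vee R)$, and the pointwise bounds $|\widehat a-1|\le C_a$, $|x||\widetilde a|\le C_a$, $|\bar a|\le n(1+C_a)$ produce the $R$-uniform estimate
\[
  |A\psi+\phi| \le \frac{C(n,\|C_a\|_{L^\infty})}{|x|\vee R} \le \frac{C(n,\|C_a\|_{L^\infty})}{|x|}.
\]
Majorising by $1/|x|$ is the only $R$-losing step, and it is precisely what makes the final estimate uniform in $R$, as the lemma requires. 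For the second piece, $\nabla\psi = \widehat x\,\psi'$ with $\psi' = |x|/(|x|\vee R)\in[0,1]$, so $|a(\nabla\psi,\nabla^b v)|\le \|a\|_{L^\infty}|\nabla^b v|$ pointwise.

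Next, I would invoke the standard duality pairing $\int|gh|\,dx\le \|g\|_{\dot Y}\|h\|_{\dot Y^*}$, which follows at once from the dyadic descriptions $\|g\|_{\dot Y}\simeq \||x|^{-1/2}g\|_{\ell^\infty L^2}$ and $\|h\|_{\dot Y^*}\simeq \||x|^{1/2}h\|_{\ell^1 L^2}$ recorded in \eqref{eq:equivnorms}. Combined with the elementary embedding $\||x|^{-1}v\|_{\dot Y}\lesssim \|v\|_{\dot X}$, immediate from the definitions after switching to polar coordinates (using $\int_{|x|=r}|v|^2\,dS\le r^2\|v\|_{\dot X}^2$), the first piece is bounded by $C(n,\|C_a\|_{L^\infty})\|v\|_{\dot X}\|f\|_{\dot Y^*}$ and the second by $\|a\|_{L^\infty}\|\nabla^b v\|_{\dot Y}\|f\|_{\dot Y^*}$. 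A final application of Young's inequality $ab\le \delta a^2 + (4\delta)^{-1}b^2$ to each of the two products then produces \eqref{eq:new8} with the stated $\delta^{-1}$ loss.

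I do not foresee any genuine obstacle: the argument is entirely elementary, and the constant $C(n,\|C_a\|_{L^\infty})\delta^{-1}$ appears naturally from the pointwise bound on $A\psi+\phi$ together with the two uses of Young. The only items worth a brief check are that the pointwise bound on $A\psi+\phi$ is $R$-uniform (via $|x|\vee R\ge|x|$) and that the embedding $\||x|^{-1}v\|_{\dot Y}\lesssim\|v\|_{\dot X}$ holds, both direct consequences of the definitions.
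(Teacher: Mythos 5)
Your proposal is correct and follows the same route as the paper: substitute the explicit form of $A\psi+\phi=(\bar a-\widehat a+|x|\widetilde a)/(|x|\vee R)$ from \eqref{eq:Apsifi}, majorize it pointwise by $C(n,\|C_a\|_{L^\infty})/|x|$, bound $|a(\nabla\psi,\nabla^b v)|\lesssim|\nabla^b v|$ since $\psi'\in[0,1]$, then apply the $\dot Y$--$\dot Y^*$ duality pairing together with $\||x|^{-1}v\|_{\dot Y}\le\|v\|_{\dot X}$ and finish with Young's inequality. This matches the paper's proof line for line (the paper merely leaves the final Young step implicit in the phrase ``the claim follows'').
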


\begin{proof}
  By \eqref{eq:Apsifi}
  \begin{equation*}
    \textstyle
    I_{f}=
    \frac{\bar{a}-\widehat{a}+|x|\widetilde{a}}{|x| \vee R}
    \Re(\overline{v}f)
    +
    \frac{2|x|a \widehat{x}}{|x| \vee R}
    \Re(\overline{\nabla^{b}v}f)
    \le
    C(n,\|C_{a}\|_{L^{\infty}})
    (\frac{|v|}{|x|}+|\nabla^{b}v|)|f|
  \end{equation*}
  and hence
  \begin{equation*}
    \textstyle
    \int_{\Omega}I_{f}
    \le
    C(n,\|C_{a}\|_{L^{\infty}})
    (\||x|^{-1}v\|_{\dot Y}+\|\nabla^{b}v\|_{\dot Y})
    \|f\|_{\dot Y^{*}}.
  \end{equation*}
  The claim follows recalling that 
  $\||x|^{-1}v\|_{\dot Y}\le\|v\|_{\dot X}$.
\end{proof}

\subsection{Conclusion of the proof}\label{sub:cconclusion}

We are ready to complete the proof of Theorem \ref{the:smoo}.
We integrate \eqref{eq:morid} on $\Omega$ with the choice of
weights \eqref{eq:ourpsi} and we take the supremum over $R>0$.
We then apply the previous Lemmas to estimate the individual terms.

We consider first the case (A$_{0}$).
One checks easily that the assumptions on $b,c$
imply the following:
for $b=b_{I}+b_{II}$ we have
\begin{equation*}
  \||x|^{3/2}\widehat{db}_{I}
    \|_{\ell^{1}L^{2}L^{\infty}}+
  \||x|^{3/2}|a-I|\widehat{db}_{II}\|_{\ell^{1}L^{2}L^{\infty}}+
  \||x|^{2}\widehat{db}_{II}\|_{L^{\infty}}
  <\mu,
\end{equation*}
with $b_{II}=0$ in $n=3$,
while the electric potential can be written
$c=c_{1}+c_{2}+c_{f}$ with
\begin{equation*}
  \||x|^{3/2}c_{f}\|_{\ell^{1}L^{2}L^{\infty}}<\mu,\quad c_{1,-}\in L^\infty
\end{equation*}
and in dimension $n\ge4$
\begin{equation*}
  |a-I|\cdot
  (|x|^{2} |c_{1}|+|x|^{3}|\nabla c_{1}|)
  +
  |x|^{2}\cdot(c_{1,-}+[\partial_{r}(rc_{1})]_{+}+c_{2,-})
  +
   \||x|c_{2}\|_{\ell^{1}L^{\infty}}
  <\mu
\end{equation*}
while in dimension $n=3$
\begin{equation*}
  \|
  |a-I|\cdot
  (|x|^{2}|c_{1}|+|x|^{3}|\nabla c_{1}|)
  +
  |x|^{2}\cdot(c_{1,-}+[\partial_{r}(rc_{1})]_{+}+
    c_{2,-})\|_{\ell^{1} L^{\infty}}+
  \||x|c_{2}\|_{\ell^{1} L^{\infty}}
  <\mu.
\end{equation*}
Indeed, it is sufficient to take $c_{1}=c_{I}$
and, for a fixed cutoff $0\le\chi(x)\le1$ supported
near 0, $c_{2}=(1-\chi)\cdot c_{II}$ and $c_{f}=\chi \cdot c_{II}$.

Consider the case $n\ge4$. 
Write $\widetilde{c}=c_{1}+c_{2}$ and
\begin{equation*}
  \widetilde{f}=(A^{b}-\widetilde{c}+\lambda+i \epsilon)v,
  \qquad
  \widetilde{f}=f+c_{f}v.
\end{equation*}
Then all the assumptions of Lemmas 
\ref{lem:1},
\ref{lem:epsilon}, 
\ref{lem:lambdameno},
\ref{lem:magnetic},
\ref{lem:Iv},
\ref{lem:Inablav} and
\ref{lem:If}
are satisfied by $a$, $b$ and $\widetilde{c}$. As a consequence we have
\begin{eqnarray*}
  \textstyle
  \sup\limits_{R>0}
  \int_{\Omega}|I_{\epsilon}|+|I_{b}|+|I_{\widetilde{f}}|
  &\le&
  C\cdot(\delta+\mu)
  (\|v\|_{\dot X}^{2} + \|\nabla^{b}v\|^{2}_{\dot Y}  
  +\||x|^{-3/2}v\|_{L^{2}}^{2}
  )
  \\
  &+&
  [\mu+\delta(|\lambda|+|\epsilon|+\|c_{I,-}\|_{L^\infty})]  \|v\|_{\dot Y}^{2}
  +
  C \delta^{-4}\|\widetilde{f}\|_{\dot Y^{*}}^{2},
\end{eqnarray*}
\begin{equation*}
  \textstyle
  \sup\limits_{R>0}\int_{\Omega}(I_{v}+I_{\nabla v})
  \ge
  (\frac12-C \mu)\||x|^{-3/2}v\|_{L^{2}}^{2}
  +
  (1-C(\mu+\delta))\|\nabla^{b}v\|_{\dot Y}^{2}
  +
  \|v\|_{\dot X}^{2}
  +
  \lambda\|v\|_{\dot Y}^{2},
\end{equation*}
\begin{equation*}
  \lambda_{-}\|v\|_{\dot Y}^{2}
  \le
  C \mu\||x|^{-3/2}v\|_{L^{2}}^{2}
  +
  \delta\|v\|_{\dot X}^{2}
  +
  C \delta^{-1}\|\widetilde{f}\|_{\dot Y^{*}}^{2}.
\end{equation*}
Thus integrating \eqref{eq:morid} on $\Omega$
and dropping the boundary terms, which give a negative contribution
as proved in Remark \ref{rem:bdry}, from the previous
inequalities taking $\delta$ and $\mu$ sufficiently small
we obtain \eqref{eq:newsmoo}, with
$\widetilde{f}=f+c_{f}v$ in place of $f$. 
More precisely, we use Lemma \ref{lem:epsilon} to get rid of the $\epsilon$ term at the right hand side, so that we obtain \eqref{eq:newsmoo} with $\epsilon=0$. To reinclude the $\epsilon$ term, we can use again \eqref{eq:new3} combined with the local smoothing just obtained, which gives
$$
   |\epsilon|^{1/2}\|v\|_{\dot Y}
    \le
    C\|a\|_{L^{\infty}}
    \left(\|\nabla^{b} v\|_{\dot Y}+\|v\|_{\dot X}
    +\|f\|_{\dot Y^{*}}\right)\leq C(a)(1+c(n))\|f\|_{\dot{Y}^*}.
$$
Now it remains  to estimate
\begin{equation*}
  \|f+c_{f}v\|_{\dot Y^{*}}
  \le
  \|f\|_{\dot Y^{*}}+
  \||x|^{3/2}c_{f}\|_{\ell^{1}L^{2}L^{\infty}}
  \|v\|_{\dot X}
  <
  \|f\|_{\dot Y^{*}}+
  \mu\|v\|_{\dot X}
\end{equation*}
and absorb the last term at the left hand side, provided
$\mu$ is small enough.
The proof for $n=3$ is completely analogous.

In the case of the weaker condition (A)
the argument is almost the same. 
We split $c=c_{1}+c_{2}+c_{3}+c_{4}+c_{f}$
with $c_{1}=c_{I}$, $c_{2}=(1-\chi) c_{II}$, $c_{3}=c_{III}$,
$c_{4}=(1-\chi)c_{IV}$ and $c_{f}=\chi\cdot(c_{II}+c_{IV})$,
and we write $\widetilde{c}=c_{1}+c_{2}+c_{3}+c_{4}$ and
\begin{equation*}
  \widetilde{f}=(A^{b}-\widetilde{c}+\lambda+i \epsilon)v,
  \qquad
  \widetilde{f}=f+c_{f}v
\end{equation*}
as before. Note that in the
estimate of $I_{\epsilon}$ we get an additional term
$\|c_{3,-}+c_{4,-}\|_{L^{\infty}}\|v\|_{\dot Y}^{2}$, while 
in estimate \eqref{eq:new6-n} we must take
$\frac12\lambda>c(n)(Z+Z^{2})\ge\Gamma_{3}+c(n)\Gamma_{4}$ in order 
to obtain positive terms. Then we can apply the lemmas as above, and
in the final step we estimate $\widetilde{f}$ as follows:
\begin{equation*}
  \|\widetilde{f}\|_{\dot Y^{*}}
  \le
  \|f\|_{\dot Y^{*}}+
  \||x|^{3/2}\chi c_{II}\|_{\ell^{1}L^{2}L^{\infty}}
  \|v\|_{\dot X}+
  \||x|\chi c_{IV}\|_{\ell^{1}L^{\infty}}\|v\|_{\dot Y}
  \le
  \|f\|_{\dot Y^{*}}+
  \mu
  \|v\|_{\dot X}+
  Z
  \|v\|_{\dot Y}.
\end{equation*}
In conclusion, we arrive at an estimate of the form
\begin{equation*}
  \|v\|_{\dot X}
  +
  (\lambda+|\epsilon|)^{1/2}\|v\|_{\dot Y}
  +
  \|\nabla^{b}v\|_{\dot Y}+
  \|(a \nabla^{b}v)_{T}\|_{L^{2}}
  \le
  c(n)
  \|f\|_{\dot Y^{*}}
  +
  c(n)Z\|v\|_{\dot Y}
\end{equation*}
and the additional term $\|v\|_{\dot Y}$ can be absorbed
at the left hand side, provided $\lambda$ is large enough.
We omit the details.

\begin{remark}[Inverse square potentials]\label{rem:invsq}
  Note that in dimension $n\ge4$ and for $\lambda>0$
  we can add to the electric potential $c$
  a further term $c_{V}$ satisfying
  \begin{equation*}
    \gamma_{5}:=\||x|^{2}c_{V}\|_{L^{\infty}}\ll1
    \qquad
    c_{V} \ \text{supported in}\ \{|x|\le1\}.
  \end{equation*}
  Indeed, taking $c_{5}=c_{V}$ in Lemma \ref{lem:Iv},
  we obtain an additional term
  at the right hand side of the estimate:
  \begin{equation*}
  \begin{split}
    \|v\|_{\dot X}
    +
    \||x|^{-3/2}v\|_{L^{2}}
    +
    (\lambda_{+}+|\epsilon|)^{1/2}\|v\|_{\dot Y}
    +
    \|\nabla^{b}v\|_{\dot Y}+
    \|(a \nabla^{b}v)_{T}\|_{L^{2}}
    \le
    \\
    \le
    c(n)(
    \|f\|_{\dot Y^{*}}
    +
    \gamma_{5}^{1/2}\||x|^{-1/2}\nabla^{b}v\|_{L^{2}(|x|\le1)}).
  \end{split}
  \end{equation*}
  We can estimate the additional term using
  Lemma \ref{lem:invsq}:
  \begin{equation*}
    \||x|^{-1/2}\nabla^{b}v\|_{L^{2}(|x|\le1)}
    \le 
    2 \lambda_{+}\||x|^{-3/2}v\|_{L^{2}(|x|\le2)}^{2}
    +
    c(n)\|v\|_{\dot X}^{2}+c(n)\|f\|_{\dot Y^{*}}^{2}
  \end{equation*}
  and if $\mu$ is small enough we can absorb the $\|v\|_{\dot X}$
  term at the right hand side:
  \begin{equation*}
  \begin{split}
    \|v\|_{\dot X}
    +
    \||x|^{-3/2}v\|_{L^{2}}
    +
    (|\lambda|+|\epsilon|)^{1/2}\|v\|_{\dot Y}
    +
    \|\nabla^{b}v\|_{\dot Y}+
    \|(a \nabla^{b}v)_{T}\|_{L^{2}}
    \le
    \\
    \le
    c(n)
    \|f\|_{\dot Y^{*}}
    +
    c(n)(\gamma_{5}\lambda_{+})^{1/2}\||x|^{-3/2}v\|_{L^{2}}.
  \end{split}
  \end{equation*}
  In conclusion, if we assume
  \begin{equation}\label{eq:smallinvsq}
    \||x|^{2}c_{V}\|_{L^{\infty}} \cdot \lambda_{+}<\epsilon(n)
  \end{equation}
  for a suitable constant $\epsilon(n)$ depending only on $n$,
  we can absorb also the remaining term and we obtain 
  that the estimate \eqref{eq:newsmoo} continues to hold.
  However in this case the condition on $c_{V}$ is not independent
  of $\lambda$ and actually becomes worse as $\lambda_{+}$ grows.
\end{remark}

\section{The radiation estimate} \label{sec:radest}

The goal of this Section is to prove an estimate for the
difference
\begin{equation*}
  \nabla^{b}_{S}v:=\nabla^{b}v-i \widehat{x} \sqrt{\lambda}v
\end{equation*}
($S$ stands for Sommerfeld)
in a norm slightly stronger than $\|\cdot\|_{\dot Y}$; 
to this purpose we use the weighted $L^{2}$ norms,
for some $\delta>0$,
\begin{equation}\label{eq:weightn}
  \int_{\Omega}|x|^{\delta-1}|\nabla^{b}_{S}v|^{2}dx.
\end{equation}
This is enough to rule out functions in the kernel of
$L+\lambda$ and hence to get uniqueness for the
Helmholtz equation. Indeed, if the previous norm is finite
then condition \eqref{eq:somm1} is satisfied.
The value of $\delta$ is connected to the
asymptotic behaviour of the metric $a(x)$ (see the
statement of Theorem \ref{the:sommlarge}), a fact already
observed in \cite{rodtao}.

Note that we can only estimate \eqref{eq:weightn} in
terms of the $\dot Y$ norms of $v$ and its derivative;
in order to get an actual
estimate, this result must be combined
with the smoothing estimate of Section \ref{sec:smooest}.

Since we are interested in the behaviour
of solutions in the limit $\lambda+i \epsilon\to \lambda>0$, it
is actually sufficient to prove an estimate in the
quarter plane $|\epsilon|<\lambda$. However, the estimate in
the case $\lambda\le|\epsilon|$ is elementary (and actually
stronger), and we give it here for the sake of
completeness.

\begin{proposition}[Radiation estimate, Case $\lambda\le|\epsilon|$]
  \label{pro:lambdasma}
  Let $\epsilon\in \mathbb{R}$, $0<\lambda\le|\epsilon|$.
  Assume $C_{a}<1/2$ and $c=c_{I}+c_{II}$ with
  \begin{equation*}
    \||x|^{2}c_{I,-}\|_{L^{\infty}}\le \kappa,
    \qquad
    \|c_{II,-}\|_{L^{\infty}}\le K.
  \end{equation*}
  If $\kappa$
  is sufficiently small with respect to $n$, we have
  \begin{equation}\label{eq:estlambdaep}
    \|\nabla^{b}v\|_{L^{2}}^{2}
    +
    \lambda\|v\|_{L^{2}}^{2}
    \lesssim
    (1+K \lambda^{-1})
    \left[
    \|v\|_{\dot Y}^{2}+
    \|f\|_{\dot Y^{*}}^{2}
    \right].
  \end{equation}
\end{proposition}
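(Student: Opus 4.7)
The plan is to derive \eqref{eq:estlambdaep} directly from the two integrated scalar identities \eqref{eq:new1} of Lemma \ref{lem:1}, using the hypothesis $\lambda\le|\epsilon|$ to convert the $\epsilon$-identity into a weighted $L^2$ control of $v$.

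First I would use the imaginary-part identity $\epsilon\int_\Omega|v|^2=\Im\int_\Omega f\bar v$ together with $\lambda\le|\epsilon|$ to get
\[
\lambda\|v\|_{L^2}^2\le|\epsilon|\|v\|_{L^2}^2=\Bigl|\Im\int_\Omega f\bar v\Bigr|\lesssim\|f\|_{\dot Y^*}\|v\|_{\dot Y},
\]
where the last step is the standard dyadic Cauchy--Schwartz pairing between $\dot Y^*$ and $\dot Y$. Young's inequality then yields the intermediate bound
\[
\lambda\|v\|_{L^2}^2\lesssim\|v\|_{\dot Y}^2+\|f\|_{\dot Y^*}^2,
\]
which already covers the $\lambda\|v\|_{L^2}^2$ term on the left of \eqref{eq:estlambdaep}.

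Next I would use the real-part identity
$\int a(\nabla^b v,\nabla^b v)=\lambda\|v\|_{L^2}^2-\Re\int f\bar v-\int c|v|^2$.
Since $C_a<1/2$ gives coercivity $a\ge\tfrac12 I$, this reduces the bound on $\|\nabla^b v\|_{L^2}^2$ to controlling the three terms on the right. The first is handled by the previous step, the second is again $\lesssim\|f\|_{\dot Y^*}\|v\|_{\dot Y}$, and for the last one I would split $c_-\le c_{I,-}+c_{II,-}$ and treat them separately. The critical singular part satisfies $c_{I,-}\le\kappa/|x|^2$, so the magnetic Hardy inequality \eqref{eq:maghardy} produces
\[
\int_\Omega c_{I,-}|v|^2\le\frac{4\kappa}{(n-2)^2}\|\nabla^b v\|_{L^2}^2,
\]
which is absorbed into the left hand side provided $\kappa$ is small with respect to $n$. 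The long-range part contributes
\[
\int_\Omega c_{II,-}|v|^2\le K\|v\|_{L^2}^2=\frac{K}{\lambda}\cdot\lambda\|v\|_{L^2}^2\lesssim\frac{K}{\lambda}\bigl(\|v\|_{\dot Y}^2+\|f\|_{\dot Y^*}^2\bigr),
\]
again by the intermediate bound. This is exactly where the factor $1+K\lambda^{-1}$ in \eqref{eq:estlambdaep} arises.

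There is no substantive obstacle here: the hypothesis $\lambda\le|\epsilon|$ trivializes the resolvent-equation mechanics that dominate the rest of the section, and the argument boils down to the two elementary identities of Lemma \ref{lem:1} combined with one application of magnetic Hardy. The only point that needs a little care is quantifying the smallness of $\kappa$ against the Hardy constant $4/(n-2)^2$ so the $c_{I,-}$ contribution can be absorbed; the lack of spatial decay of $c_{II,-}$ is intrinsic and is what forces the $K\lambda^{-1}$ factor, making the bound useful primarily in the regime $\lambda\gtrsim K$ exploited later.
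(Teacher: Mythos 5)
Your argument is correct and matches the paper's proof essentially step for step: the imaginary-part identity with $\lambda\le|\epsilon|$ gives the $\lambda\|v\|_{L^2}^2$ bound, the real-part identity plus coercivity of $a$ reduces the gradient bound to the potential terms, the critical $c_{I,-}$ part is absorbed via magnetic Hardy for small $\kappa$, and $c_{II,-}$ is paid for via the $K\lambda^{-1}$ factor. No meaningful deviation from the paper's route.
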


\begin{proof}
  We can assume $\epsilon>0$.
  By $\lambda\le \epsilon$ and \eqref{eq:new1} we have
  \begin{equation*}
    \textstyle
    \lambda\int|v|^{2}\le
    \epsilon\int|v|^{2}\le
    \int|f \overline{v}|
    \le\|v\|_{\dot Y}^{2}+
    \|f\|_{\dot Y^{*}}^{2}.
  \end{equation*}
  Also by \eqref{eq:new1}, we can write 
  for all $\delta>0$
  \begin{equation*}
    \textstyle
    \int a(\nabla^{b}v,\nabla^{b}v)
    \le
    \lambda\int|v|^{2}
    +
    \int c_{-}|v|^{2}
    +
    \int|f \overline{v}|
    \le
    (\lambda+K)
    \int|v|^{2}
    +
    \kappa\int \frac{|v|^{2}}{|x|^{2}}
    +
    \int|f \overline{v}|.
  \end{equation*}
  By the magnetic Hardy inequality \eqref{eq:maghardy}
  and the previous inequality we have then
  \begin{equation*}
    \textstyle
    \le
    (2+K \lambda^{-1})
    \int|f \overline{v}|
    +
    \kappa c(n)\|\nabla^{b}v\|_{L^{2}}^{2}
  \end{equation*}
  and if $\kappa$ is sufficiently small we deduce
  \begin{equation*}
    \textstyle
    \|\nabla^{b}v\|_{L^{2}}^{2}
    \lesssim
    (1+K \lambda^{-1})
    \int|f \overline{v}|.
  \end{equation*}
  Appying the Cauchy-Schwartz inequality we obtain
  \eqref{eq:estlambdaep}.
\end{proof}

\begin{theorem}[Radiation estimate, Case $\lambda>|\epsilon|$]
  \label{the:sommlarge}
  Let $\delta\in(0,1]$, $b=b_{I}+b_{II}$, $c=c_{I}+c_{II}$ 
  and assume that $|x|^{3}\nabla c_{I}\in L^{\infty}$ and
  for some constants $\kappa,K$
  \begin{equation*}
    \|C_{a}\|_{L^{\infty}}
    +
    \||x|^{2}\widehat{db}_{I}
      \|_{L^{\infty}}
    +
    \||x|^{2}(\partial_{r}(|x|c_{I}))_{+}\|
      _{L^{\infty}}
    +
    \||x|^{2}c_{I,-}\|_{L^{\infty}}
    \le \kappa,
  \end{equation*}
  \begin{equation*}
    \| |x|^{\delta}(|a-I|+|x| |a'|)\|_{\ell^{1}L^{\infty}}
    +
    \| |x|^{\delta+1}\widehat{db}_{II}\|_{\ell^{1}L^{\infty}}
    +
    \||x|^{\delta}c_{II}\| _{\ell^{1}L^{\infty}}
    \le K.
  \end{equation*}
  If $\kappa$ is sufficiently small with respect to $n,\delta$,
  then we have for $\delta<1$
  \begin{equation}\label{eq:sommerest1b}
  \begin{split}
    \textstyle
    (1-\delta)
    &
    \||x|^{\delta-1} 
    (a \nabla^{b}v)_{T}
    \|_{L^{2}}^{2}
    \textstyle
    +\int (|x|^{\delta-1}+\frac{\epsilon}{\sqrt{\lambda}}|x|^{\delta})
    |\nabla_{S}^{b}v|^{2}
    \lesssim
    \\
    \lesssim
    &
    \textstyle
    (1+K)
    \left[
    (1+\lambda)\|v\|_{\dot Y}^{2}+\|\nabla^{b} v\|_{\dot Y}^{2}
      + \int|x|^{\delta}\bra{x}|f|^{2}
      \right] +  K \lambda^{-1} \int|x|^{\delta}\bra{x}|f|^{2},
  \end{split}
  \end{equation}
  while for $\delta=1$ we have
  \begin{equation}\label{eq:sommerest2b}
    \textstyle
    \int (1+\frac{\epsilon}{\sqrt{\lambda}}|x|)
    |\nabla^{b}_{S}v|^{2}
    \lesssim
    (1+K)
    \left[
    (1+\lambda)\|v\|_{\dot Y}^{2}+\|\nabla^{b} v\|_{\dot Y}^{2}
      + \lambda^{-1}  \|f\|_{\dot Y^{*}}^{2}+
      \int|x|^{2}|f|^{2}
    \right].
  \end{equation}
 \end{theorem}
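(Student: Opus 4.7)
The plan is to apply the Morawetz-type identity of Theorem \ref{the:morid} with radial weights $\psi, \phi$ chosen so that the combination $\nabla_S^b v = \nabla^b v - i \widehat{x}\sqrt{\lambda} v$ emerges on the left after assembling the different terms. The guiding algebraic observation is
\begin{equation*}
  |\nabla_S^b v|^{2}
  =
  |\nabla^b v|^{2}+\lambda |v|^{2}
  +2\sqrt{\lambda}\,\Im\bigl(\widehat{x}\cdot v\overline{\nabla^b v}\bigr),
\end{equation*}
so that the whole Sommerfeld quantity will appear once we can generate the radial/tangential split of $|\nabla^b v|^{2}$, the $\lambda|v|^2$ term and the mixed imaginary term with matching positive weights.

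First I would pick a radial $\psi$ with $\psi'(r)\sim r^{\delta}$ (e.g.\ $\psi(r)=r^{\delta+1}/(\delta+1)$, truncated smoothly at large $r$ to keep all integrals well defined, much as in \eqref{eq:ourpsi}). Then $I_{\nabla v}$ and the main piece of $I_v$ from Lemmas \ref{lem:Inablav} and \ref{lem:Iv} produce, after integration and use of \eqref{eq:a2psi}--\eqref{eq:a2psiest}, a positive contribution of the form $(1-\delta)|x|^{\delta-1}|(a\nabla^b v)_T|^2$ together with $\lambda|x|^{\delta-1}|v|^2$ terms. The factor $(1-\delta)$ comes from comparing $\psi''=\delta r^{\delta-1}$ with $\psi'/r=r^{\delta-1}$, which explains its degeneration at $\delta=1$. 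The mixed term $2\sqrt{\lambda}\,\Im(\widehat{x}\cdot v\overline{\nabla^b v})\psi'$ is produced by combining the above with the identity \eqref{eq:morid2} applied to the multiplier $\phi=\sqrt{\lambda}\,\psi'$ (whose imaginary part, as in the derivation of \eqref{eq:Imphi}, gives precisely this cross term plus the $\epsilon$-term). Summing both identities, the $|\nabla^b v|^2$, $\lambda|v|^2$ and cross contributions assemble into $|\nabla_S^b v|^2$ with the advertised weight $|x|^{\delta-1}$, while the $\epsilon$-piece yields the weight $\epsilon|x|^\delta/\sqrt{\lambda}$ on the left of \eqref{eq:sommerest1b}.

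Next I would estimate the error terms. The short-range parts $C_a$, $|x|^2\widehat{db}_I$, $|x|^2(\partial_r(|x|c_I))_+$, $|x|^2 c_{I,-}$ are all pointwise small (bounded by $\kappa$), so the corresponding error contributions are absorbed into the positive left-hand side once $\kappa$ is chosen sufficiently small depending on $n,\delta$; this is entirely parallel to the absorption step carried out in Section \ref{sub:cconclusion}. The long-range parts $(a-I), |x||a'|, \widehat{db}_{II}, c_{II}$ are controlled via the weighted $\ell^1 L^\infty$-norms by $K$: paired with $\|v\|_{\dot Y}^2$, $\|\nabla^b v\|_{\dot Y}^2$ through Cauchy--Schwartz (using the obvious analogue of Lemma \ref{lem:magnetic} for $I_b$ and the $c_2,c_4$-type estimates in Lemma \ref{lem:Iv}), they contribute the overall factor $(1+K)$ on the right. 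The source pairing $I_f$ is dual to the multiplier: $\psi'\sim|x|^\delta$ and $A\psi+\phi \sim |x|^{\delta-1}$, whose natural dual weights $|x|^\delta$ and $|x|^{\delta+1}$ for $f$ combine, after splitting the integral between $|x|\le1$ and $|x|\ge1$, into exactly $\int|x|^\delta\langle x\rangle |f|^2$.

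The endpoint $\delta=1$ is treated separately because the coefficient $(1-\delta)$ of the tangential piece vanishes; in that case I would use the full $|\nabla_S^b v|^2$ with constant weight (still produced by the same Morawetz mechanism), pay for this with the stronger weight $|x|$ in the source pairing, and obtain the factor $\int|x|^2|f|^2$ of \eqref{eq:sommerest2b}. The main technical obstacle I expect is the bookkeeping of the $\sqrt{\lambda}$-dependent terms in $I_b$ and $I_\epsilon$: the multiplier $\phi=\sqrt{\lambda}\psi'$ is chosen precisely so that the $\sqrt{\lambda}$ factors pair with the potentially dangerous $\sqrt{\lambda}|v|\cdot|\nabla^b v|$-type quantities, but the magnetic cross term $I_b$, which carries no a priori $\sqrt{\lambda}$, must be handled so that the $\kappa$-smallness (not $K$) absorbs it into the Sommerfeld left-hand side, since $K$ is allowed to be arbitrary. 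This is why the short-range bound $\||x|^2\widehat{db}_I\|_{L^\infty}\le\kappa$ is decoupled from the long-range $|x|^{\delta+1}\widehat{db}_{II}$ bound in the hypotheses.
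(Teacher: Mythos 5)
Your proposal follows essentially the same route as the paper: apply the Morawetz identity \eqref{eq:morid} with a radial $\psi$ satisfying $\psi'=|x|^{\delta}$, add the imaginary part of \eqref{eq:morid2} with a $\sqrt{\lambda}$-scaled multiplier so that the algebraic identity for $|\nabla^b_S v|^2$ assembles the Sommerfeld quantity with weight $\chi'+\tfrac{\epsilon}{\sqrt{\lambda}}\chi$, absorb the $\kappa$-controlled short-range errors into that left-hand side by Hardy's inequality, track the long-range errors via $K$ into the right-hand side, and pair $I_f$ dually against $|x|^{\delta-1}$ and $|x|^{\delta}$ to produce $\int|x|^{\delta}\bra{x}|f|^2$. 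The factor $(1-\delta)$ in front of the tangential term and the separate treatment of $\delta=1$ are also correctly explained.

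Two points, though, are glossed over and are exactly where the real work lies. First, the paper actually uses $\phi=-\chi'+\tfrac{\epsilon}{\sqrt{\lambda}}\chi$ in \eqref{eq:morid} \emph{and} $\phi=-2\sqrt{\lambda}\chi$ in \eqref{eq:morid2}; your $\phi=\sqrt{\lambda}\psi'$ has both the wrong sign and the wrong factor, and if you don't get the combination right the terms do not reassemble into $a(\nabla^b_S v)$ (you will be left with a mismatched $\lambda|v|^2$ or cross term). Second, the estimate of the $\epsilon$-contribution in $I_v$, i.e.\ $\tfrac{\epsilon}{\sqrt{\lambda}}\delta\int\tfrac{|v|^2}{|x|^{2-\delta}}$, cannot be absorbed into $\sigma\int I_S$ by Hardy alone uniformly in $\delta\in(0,1]$: the direct Hardy estimate restricts $\delta$ to a range that fails in low dimensions. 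The paper gets around this by a secondary split of the integral at a radius $\alpha$, chosen $\alpha\sim\sigma/(C_1\delta\sqrt{\lambda})$, combined with the identity $\epsilon\int|v|^2\le\int|f\overline{v}|$; this is the origin of the $\lambda$-dependent factor $(1+\lambda)\|v\|_{\dot Y}^2$ on the right-hand side. Your sketch treats the $\sqrt{\lambda}$ bookkeeping as a "main technical obstacle" but points only to $I_b$ and $I_\epsilon$; the obstruction is really in this $I_v$ term, and the $\alpha$-split (together with the restriction $\epsilon\le\lambda$ to control $\epsilon/\lambda\le1$) is the missing idea needed to close the argument for all $\delta\in(0,1]$ and all $n\ge3$.
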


\begin{proof}
  In the proof we shall use the shorthand notation
  \begin{equation*}
    a(w):=a(w,w)=a(x)w \cdot \overline{w},
    \qquad
    w\in \mathbb{C}^{n}
  \end{equation*}
  for the quadratic form associated to the matrix $a$.
  We can assume $\epsilon\ge0$, the other case being similar.

  For later use we write the computations in terms of
  a generic weight function $\chi$ as far as possible.
  We consider again identity \eqref{eq:morid} with the choices
  \begin{equation*}
    \textstyle
    \psi'=\chi
    \quad\text{i.e.}\quad
    \psi(|x|)=\int_{0}^{|x|}\chi(s)ds, 
    \qquad
    \phi=-\chi'+\frac{\epsilon}{\sqrt{\lambda}}\chi
  \end{equation*}
  where $\chi$ is a smooth radial function with
  $\chi,\chi'\ge0$,
  and we add to it the imaginary part of identity \eqref{eq:morid2}
  with the choice $\phi=-2 \sqrt{\lambda}\chi$.
  We also rearrange the terms using the identities
  \begin{equation*}
    I_{\epsilon}=2 \epsilon\Im[a(\nabla \psi,\nabla^{b}v)v]
    =
    \left[
    a(\nabla^{b}v-i \widehat{x} \sqrt{\lambda}v)-
    a(\nabla^{b}v)-\widehat{a}\lambda|v|^{2}
    \right]
    \frac{\epsilon}{\sqrt{\lambda}}\chi
  \end{equation*}
  and
  \begin{equation}
    \Im a(\nabla^{b} v,v \nabla (-2 \sqrt{\lambda}\chi))
    =
    \left[
    a(\nabla^{b}v-i \widehat{x} \sqrt{\lambda}v)-
    a(\nabla^{b}v)-\widehat{a}\lambda|v|^{2}
    \right]
    \chi'.
  \end{equation}
  We obtain the following identity:
  \begin{equation}\label{eq:morid3}
    I_{S}+I_{\nabla v}+I_{v}+I_{c}+I_{b}+I_{f}=
    \partial_{j}\{\Re Q_{j}+\Re P_{j}+\Im \widetilde{P}_{j}\}
  \end{equation}
  where
  \begin{equation*}
    \textstyle
    I_{S}=
    \left[
      \chi'+\frac{\epsilon}{\sqrt{\lambda}}\chi
    \right]
    a(\nabla^{b}v-i \sqrt{\lambda} \widehat{x}v)
  \end{equation*}
  \begin{equation*}
    \textstyle
    I_{\nabla v}=
    2
    |(a \nabla^{b}v)_{R}|^{2}\chi'
    +
    2
    |(a \nabla^{b}v)_{T}|^{2}\frac{\chi}{|x|}
    -
    2
    a(\nabla^{b}v)\chi'
    +
    r_{\ell m} 
      \Re(\partial^{b}_{\ell}v\overline{\partial^{b}_{m}v})
  \end{equation*}
  with
  $r_{\ell m}(x)= 
  [2a_{jm}a_{\ell k;j}-a_{jk}a_{\ell m;j}]\widehat{x}_{k}\chi$
  and using notation \eqref{eq:tangder},
  \begin{equation*}
    \textstyle
    I_{v}=
    \left[
      -\frac12 A(A \psi+\phi)
      +(1-\widehat{a})(\epsilon \sqrt{\lambda}\chi+\lambda \chi')
    \right]
    |v|^{2}
  \end{equation*}
  \begin{equation*}
    \textstyle
    I_{c}=
    \left[
      \frac{\epsilon}{\sqrt{\lambda}}\chi c-
      \chi'c
      -a(\widehat{x},\nabla c)\chi
    \right]
    |v|^{2}
  \end{equation*}
  \begin{equation*}
    I_{b}=
    2\Im [(a\nabla^{b}v)\cdot (db)_{T})\overline{v}]\chi
  \end{equation*}
  \begin{equation*}
    I_{f}=
    \Re [(A\psi+\phi)\overline{v}f
    +2a(\widehat{x},\nabla^bv)\chi f]
    -2 \sqrt{\lambda}\Im(\overline{v}f \chi)
  \end{equation*}
  where
  \begin{equation*}
    Q_{j}=
    a_{jk}\partial^{b}_{k}v \cdot 
      [A^{b},\psi]\overline{v}
      -\textstyle\frac12 a_{jk}(\partial_{k}A \psi)|v|^{2}
      -a_{jk}\widehat{x}_{k}\chi 
      \left[(c-\lambda)|v|^{2}+a(\nabla^{b}v)\right]
  \end{equation*}
  with $\psi'=\chi$, and
  \begin{equation*}
    \textstyle
    P_{j}=
    a_{jk}\partial^{b}_{k}v \overline{v}
    [\frac{\epsilon}{\sqrt{\lambda}}\chi-\chi']
   -\frac12 a_{jk}\widehat{x}_{k}|v|^{2}
    [\frac{\epsilon}{\sqrt{\lambda}}\chi'-\chi''],
    \qquad
    \widetilde{P}_{j}=
    a_{jk}\overline{v}\partial^{b}_{k}v\chi.
  \end{equation*}
  Note that at $\partial \Omega$ the 
  boundary terms $P_{j},\widetilde{P}_{j}$ vanish,
  while $Q_{j}$ give a negative contribution
  as proved in Remark \ref{rem:bdry}; on the other
  hand, the integrals of $P_{j},\widetilde{P}_{j},Q_{j}$
  on the sphere $\{|x|=M\}$ tend to zero as $M\to \infty$
  by the conditions imposed on the growth of $\chi$.
  Hence by integrating \eqref{eq:morid3}
  on $\Omega\cap\{|x|\le M\}$ and letting
  $M\to \infty$ we can neglect the boundary terms
  and we obtain
  \begin{equation*}
    \textstyle
    \int_{\Omega}(I_{S}+I_{\nabla v}+I_{v}+I_{c}+I_{b}+I_{f})\le0.
  \end{equation*}
  We shall also use the 
  magnetic Hardy inequality \eqref{eq:maghardy} for
  different choices of $s$.
  Note that with the substitution
  $w=e^{-i \sqrt{\lambda}|x|}v$ we have also
  \begin{equation}\label{eq:magharS}
    \textstyle
    \||x|^{-s}v\|_{L^{2}}
    \le
    \frac{2}{n-2s}
    \||x|^{1-s}\nabla^{b}_{S}v\|_{L^{2}}
  \end{equation}
  where we used the notation 
  $\nabla^{b}_{S}=\nabla^{b}-i \widehat{x} \sqrt{\lambda}$.

  We estimate each term separately. We can write
  \begin{equation*}
    \textstyle
    I_{\nabla v}=
    2 \chi' a(\nabla^{b}v,(a-I)\nabla^{b}v)
    +
    2(\frac{\chi}{|x|}-\chi')
    |(a \nabla^{b}v)_{T}|^{2}
    +  r_{\ell m} 
      \Re(\partial^{b}_{\ell}v\overline{\partial^{b}_{m}v})
  \end{equation*}
  and noticing that $\chi\ge|x|\chi'$ 
  for $\chi=|x|^{\delta}$, $\delta\le1$,
  we obtain
  \begin{equation}\label{eq:prima}
    \textstyle
    \int_{\Omega}
    I_{\nabla v}
    \ge
    -
    c(n)
    \||x|^{\delta}(|a-I|+|x||a'|)\|_{\ell^{1}L^{\infty}}
    \cdot
    \|\nabla^{b}v\|_{\dot Y}^{2}
    +
    (1-\delta)\||x|^{\frac{\delta-1}2}
    (a \nabla^{b}v)_{T}\|_{L^{2}}^{2}.
  \end{equation}

 In order to estimate $I_{v}$ we first compute
  \begin{equation*}
    \textstyle
    A \psi+\phi=
    (\widehat{a}-1)\chi'
    +
    \frac{\bar{a}-\widehat{a}+|x|\widetilde{a}}{|x|}
    \chi
    +
    \frac{\epsilon}{\sqrt{\lambda}}\chi.
  \end{equation*}
  Recalling \eqref{eq:estader} we have easily
  \begin{equation}\label{eq:estapsifi}
    \textstyle
    |A \psi+\phi|
    \le
    c(n)(\frac{\chi}{|x|}+\chi')+
    \frac{\epsilon}{\sqrt{\lambda}}\chi
    \le
    c(n)|x|^{\delta-1}
    +
    \frac{\epsilon}{\sqrt{\lambda}}|x|^{\delta},
  \end{equation}
  while a straightforward computation gives,
  with $\mu_{n}=(n-1)(n-3)$,
  \begin{equation*}
    \textstyle
    A(A \psi+\phi)\le
    -\frac{\mu_{n}}{|x|^{2}}(\frac{\chi}{|x|}-\chi')
    + \frac{n-1}{|x|}\chi''
    +c(n)\frac{C_{a}C_{\chi}}{|x|^{2}}
    + \frac{\epsilon}{\sqrt{\lambda}}
    \left(
    \widehat{a}\chi''+\frac{n-1}{|x|}\chi'
    + c(n)\frac{C_{a}\chi'}{|x|}
    \right)
  \end{equation*}
  where 
  \begin{equation*}
    C_{\chi}(x):=
      |x|^{-1}\chi+
      \chi'+
      |x||\chi''|+
      |x|^{2}|\chi'''|.
  \end{equation*}
  With the choice $\chi=|x|^{\delta}$, and dropping
  a negative term, this reduces to
  \begin{equation*}
    \textstyle
    A(A \psi+\phi)\le
    -
    \frac{(1-\delta)(n-3+\delta)}{|x|^{3-\delta}}
    +\frac{c(n)C_{a}}{|x|^{3-\delta}}
    + \frac{\epsilon \delta}{\sqrt{\lambda}}
    \frac{n-1+C_{a}c(n)}{|x|^{2-\delta}}.
  \end{equation*}
  We shall drop also the first term at the right, although it gives
  a positive contribution, since it can be recovered from the
  final estimate.
  Thus we have
  \begin{equation*}
    \textstyle
    I_{v}
    \ge
    -\frac{c(n)C_{a}}{|x|^{3-\delta}}|v|^{2}
    -\frac{\epsilon \delta}{\sqrt{\lambda}}
    \frac{n-1+C_{a}c(n)}{|x|^{2-\delta}}|v|^{2}
    -
    |a-I|
    (\epsilon \sqrt{\lambda}\chi + \lambda \chi')
    |v|^{2}.
  \end{equation*}
  We now integrate $I_{v}$ on $\Omega$.
  Thanks to the magnetic Hardy inequality \eqref{eq:magharS}
  with $s=(3-\delta)/2$
  and using the previous estimate for $A(A \psi+\phi)$, we have
  \begin{equation*}
    \textstyle
    c(n)
    \int C_{a}|x|^{\delta-3}|v|^{2}
    \le 
    \frac{4c(n)\|C_{a}\|_{L^{\infty}}}{(n-3+\delta)^{2}}
    \int
    |x|^{\delta-1}|\nabla^{b}_{S}v|^{2}
    \le
    \sigma
    \int I_{S}
  \end{equation*}
  (note that in 3D the constant $\to \infty$ as $\delta\to 0$)
  provided
  \begin{equation*}
    \textstyle
    \frac{4c(n)\|C_{a}\|_{L^{\infty}}}{\nu(n-3+\delta)^{2}}\le 
    \sigma \cdot \delta.
  \end{equation*}
  Here $\sigma$ is a universal constant (it will be chosen 
  equal to $1/10$) which we keep around to track the 
  smallness assumptions on the coefficients.
  In a similar way, with $s=(2-\delta)/2$,
  \begin{equation}\label{eq:firstposs}
    \textstyle
    \frac{\epsilon}{\sqrt{\lambda}}\delta
    \int\frac{n-1+C_{a}c(n)}{|x|^{2-\delta}}|v|^{2}
    \le
    \frac{4\delta(n-1+c(n)\|C_{a}\|_{L^{\infty}})}
    {(n-2-\delta)^{2}}
    \int \frac{\epsilon}{\sqrt{\lambda}} 
    |x|^{\delta}|\nabla^{b}_{S}v|^{2}
    \le
    \sigma
    \int I_{S}
  \end{equation}
  provided
  \begin{equation*}
    \textstyle
    \frac{4 (n-1+c(n)\|C_{a}\|_{L^{\infty}})}{\nu(n-2-\delta)^{2}}
    \cdot \delta\le \sigma.
  \end{equation*}
  Note that the last condition restricts $\delta$ to an interval
  $(0,\delta_{n}]$ which covers $(0,1]$ only for $n$ sufficiently
  large. To get around this difficulty we give an alternative
  estimate of the $\epsilon$ term.
  Fix $\alpha>0$ and split the integral in the regions
  $|x|\le \alpha$ and $|x|\ge \alpha$: 
  \begin{equation*}
    \textstyle
    \epsilon
    \int \frac{|v|^{2}}{|x|^{2-\delta}}
    \le
    \epsilon
    \alpha\int \frac{|v|^{2}}{|x|^{3-\delta}}
    +
    \epsilon\alpha^{\delta-2} \int |v|^{2}
    \le
    \frac{4 \epsilon \alpha}{\nu(n-3+\delta)^{2}}
    \int
    I_{S}
    +
    \alpha^{\delta-2} \int |f\overline{v}|
  \end{equation*}
  where we used again \eqref{eq:magharS} and 
  the inequality
  $\epsilon\int_{\Omega}|v|^{2} \le \int_{\Omega}|f\overline{v}|$
  (recall the first identity \eqref{eq:new1}).
  Hence we obtain
  \begin{equation*}
    \textstyle
    \frac{\epsilon}{\sqrt{\lambda}}\delta
    \int\frac{n-1+C_{a}c(n)}{|x|^{2-\delta}}|v|^{2}
    \le
    C_{1}
    \frac{\epsilon \delta \alpha}{\sqrt{\lambda}}
    \int I_{S}
    +
    C_{2}\frac{\delta\alpha^{\delta-2}}{\sqrt{\lambda}}
    \int|f\overline{v}|
  \end{equation*}
  where
  \begin{equation*}
    \textstyle
    C_{1}=\frac{4(n-1+c(n)\|C_{a}\|_{L^{\infty}})}
        {\nu(n-3+\delta)^{2}},
    \qquad
    C_{2}=n-1+c(n)\|C_{a}\|_{L^{\infty}}.
  \end{equation*}
  We choose now
  \begin{equation*}
    \textstyle
    \alpha=\frac{\sigma}{C_{1}\delta \sqrt{\lambda}}
  \end{equation*}
  and we arrive at the following inequality, which is 
  valid for all $\delta\in(0,1]$:
  \begin{equation*}
    \textstyle
    \frac{\epsilon}{\sqrt{\lambda}}\delta
    \int\frac{n-1+C_{a}c(n)}{|x|^{2-\delta}}|v|^{2}
    \le
    \frac{\epsilon}{\lambda}\sigma\int I_{S}+
    C_{3}
    \sqrt{\lambda}^{1-\delta}
    \int|f\overline{v}|
  \end{equation*}
  where 
  \begin{equation*}
    \textstyle
    C_{3}=
    \frac{4^{2-\delta}[\delta(n-1+c(n)\|C_{a}\|_{L^{\infty}})]^{3-\delta}}
    {(\sigma \nu(n-3+\delta)^{2})^{2-\delta}}
  \end{equation*}
  and we can estimate the coefficient $\epsilon/\lambda$ with
  1 since $\lambda\ge \epsilon$.
  Thus we get
  \begin{equation*}
    \textstyle
    \frac{\epsilon}{\sqrt{\lambda}}\delta
    \int\frac{n-1+C_{a}c(n)}{|x|^{2-\delta}}|v|^{2}
    \le
    \sigma\int I_{S}
    +
    c(n,\delta)(1+\lambda)^{1-\delta}\|v\|_{\dot Y}^{2}+
    \|f\|_{\dot Y^{*}}^{2}.
  \end{equation*}
  Moreover we have
  \begin{equation*}
    \textstyle
    \int|a-I|\lambda \chi'|v|^{2}
    \le
    \||a-I| |x|^{\delta}\|_{\ell^{1}L^{\infty}}
    \lambda\|v\|_{\dot Y}^{2}
  \end{equation*}
  \begin{equation*}
    \textstyle
    \int|a-I|\epsilon \sqrt{\lambda}\chi|v|^{2}
    \le
    \||a-I||x|^{\delta}\|_{L^{\infty}}\sqrt{\lambda}\int|f\overline{v}|
  \end{equation*}
  where we used the estimate
  $\epsilon\int_{\Omega}|v|^{2} \le \int_{\Omega}|f\overline{v}|$
  which follows from \eqref{eq:new1}.
  Summing up, we obtain, as $\delta\in(0,1]$,
  \begin{equation}\label{eq:seconda}
    \textstyle
    \int I_{v}
    \ge
    -2 \sigma\int I_{S}
    -
    c(n,\delta)(1+K)
    ((1+\lambda)\|v\|_{\dot Y}^{2}
    +\|f\|_{\dot Y^{*}}^{2}).
  \end{equation}

  The term $I_{b}$ can be estimated as follows.
  We note that
  \begin{equation*}
    a\nabla^{b}v \cdot (db)_{T}=
    a\nabla^{b}_{S}v\cdot (db)_{T}
  \end{equation*}
  so that, with the choice $\chi=|x|^{\delta}$,
  \begin{equation*}
    \textstyle
    \int I_{b_{I}}\ge
    -c(n)\||x|^{\frac{\delta-1}2}\nabla^{b_{I}}_{S}v\|_{L^{2}}
    \||x|^{2}\widehat{db}_{I}
      \|_{L^{\infty}}
    \||x|^{\frac{\delta-3}2}v\|_{L^{2}}
  \end{equation*}
  and using the magnetic Hardy inequality
  \begin{equation*}
    \textstyle
    \int I_{b_{I}}\ge
    -\frac{2c(n)}{(n-3+\delta)}
    \||x|^{2}\widehat{db}_{I}
      \|_{L^{\infty}}
    \||x|^{\delta-1}\nabla^{b}_{S}v\|_{L^{2}}^{2}
    \ge - \sigma\int I_{S}
  \end{equation*}
  provided
  \begin{equation*}
    \textstyle
    \frac{2c(n)}{\nu(n-3+\delta)}
        \||x|^{2}\widehat{db}_{I}
          \|_{L^{\infty}}
    \le
    \frac{2c(n)}{\nu(n-3+\delta)}
    \cdot \kappa\le \sigma \cdot \delta.
  \end{equation*}
  For the second piece $I_{b_{II}}$ we have simply
  \begin{equation*}
    \textstyle
    \int I_{b_{II}}\ge
    -c(n)\|\nabla^{b}v\|_{\dot Y}\|v\|_{\dot Y}
    \||x|^{1+\delta}\widehat{db}_{II}\|_{\ell^{1}L^{\infty}}
    \ge
    -c(n)K\|\nabla^{b}v\|_{\dot Y}\|v\|_{\dot Y}
  \end{equation*}
  and in conclusion
  \begin{equation}\label{eq:terza}
    \textstyle
    \int I_{b}\ge
    - \sigma\int I_{S}
    -c(n)K\|\nabla^{b}v\|_{\dot Y}\|v\|_{\dot Y}.
  \end{equation}



  To estimate $I_{c}$ we begin by writing, with $\chi=|x|^{\delta}$,
  \begin{equation*}
    \textstyle
    \int
    I_{c_{I}}\ge
    -\frac{\epsilon}{\sqrt{\lambda}}
    \||x|^{2}c_{I,-}\|_{L^{\infty}}\int||x|^{\delta-2}|v|^{2}
    -\int[\delta|x|^{\delta-1} c_{I}
    +|x|^{\delta}a(\widehat{x}, \nabla c_{I})]|v|^{2}
  \end{equation*}
  and the first term can be handled again using Hardy's inequality:
  \begin{equation*}
    \textstyle
    \ge
    -\sigma\int I_{S}
    -\int[\delta|x|^{\delta-1} c_{I}
    +|x|^{\delta}a(\widehat{x}, \nabla c_{I})]|v|^{2}
  \end{equation*}
  provided
  \begin{equation*}
    \textstyle
    \frac{4}{\nu(n-2+\delta)^{2}}\||x|^{2}c_{I,-}\|_{L^{\infty}}
    \le
    \frac{4}{\nu(n-2+\delta)^{2}}\cdot\kappa
    \le \sigma \cdot \delta.
  \end{equation*}
  To bound the second integral we write,
  with $\partial_{r}$ denoting the radial derivative,
  \begin{equation*}
    \delta|x|^{\delta-1} c_{I} +
    |x|^{\delta}a(\widehat{x}, \nabla c_{I} )]|v|^{2}
    =
    \partial_{r}(|x|^{\delta}c_{I} )
    +
    (a-I)\widehat{x}\cdot\nabla c_{I} |x|^{\delta}
  \end{equation*}
  \begin{equation*}
    =
    \left((\delta-1)|x|^{2}c_{I}+|x|^{2}\partial_{r}(|x|c_{I})\right)
    |x|^{\delta-3}
    +
    (a-I)\widehat{x}\cdot\nabla c_{I} |x|^{\delta}
  \end{equation*}
  \begin{equation*}
    \le
    \kappa \cdot
    (1+\|x|^{3}\nabla c_{I} \|_{L^{\infty}})
    \cdot
    |x|^{\delta-3}
  \end{equation*}
  and hence, using Hardy's inequality,
  \begin{equation*}
    \textstyle
    \int[\delta|x|^{\delta-1} c_{I} 
    +a(\widehat{x}, \nabla c_{I} )\chi]|v|^{2}
    \le
    \sigma\int I_{S}
  \end{equation*}
  provided
  \begin{equation*}
    \textstyle
    \frac{4}{\nu(n-3+\delta)^{2}}
    (1+\|x|^{3}\nabla c_{I} \|_{L^{\infty}})
    \cdot
    \kappa
    \le \sigma \cdot \delta.
  \end{equation*}
  Thus we have proved, for $\kappa$ small enough,
  \begin{equation*}
    \textstyle
    \int I_{c_{I}}\ge
    -2 \sigma\int I_{S}.
  \end{equation*}
  For the second piece $I_{c_{II}}$ we use again \eqref{eq:new1}:
  with $\chi=|x|^{\delta}$, we have
  \begin{equation*}
    \textstyle
    \int
    I_{c_{II}}\ge
    -\lambda^{-1/2}\||x|^{\delta} c_{II,-}\|_{L^{\infty}}
    \int|f\overline{v}|
    -\int[\chi' c_{II}+a(\widehat{x}, \nabla c_{II})\chi]|v|^{2}.
  \end{equation*}
  Using the identity ($c=c_{II}$)
  \begin{equation*}
    \textstyle
    a(\widehat{x},\nabla c)\chi|v|^{2}=
    \partial_{j}\{a_{j k}\widehat{x}_{k}c \chi|v|^{2}\}
    -
    \frac{\bar{a}-\widehat{a}+|x|\widetilde{a}}{|x|}c \chi|v|^{2}
    -
    \widehat{a}c \chi'|v|^{2}
    -
    2\Re a(\nabla^{b}v,\widehat{x}v) c \chi
  \end{equation*}
  we obtain
  \begin{equation*}
    \textstyle
    \int[\chi' c_{II}+a(\widehat{x}, \nabla c_{II})\chi]|v|^{2}
    \le
    c(n)
    \||x|^{\delta}c_{II}\|_{\ell^{1}L^{\infty}}
    (\|v\|_{\dot Y}^{2}+\|\nabla^{b}v\|_{\dot Y}^{2}).
  \end{equation*}
  Summing up, we have proved
  \begin{equation}\label{eq:quarta}
    \textstyle
    \int I_{c}\ge
    -2 \sigma\int I_{S}
    -\lambda^{-1}K\|f\|_{\dot Y^{*}}^{2}
        -
        c(n)K(\|\nabla^{b}v\|_{\dot Y}^{2}+\|v\|_{\dot Y}^{2}).
  \end{equation}

  Finally for $I_{f}$ we can write
  \begin{equation*}
    2\Re a(\widehat{x},\nabla^bv)\chi f
      -2 \sqrt{\lambda}\Im(\overline{v}f \chi)
    =
    2\Re(a-I)\widehat{x}\cdot \overline{\nabla^{b}v}\chi f
    +
    2\Re\widehat{x}\cdot \overline{\nabla^{b}_{S}v}\chi f
  \end{equation*}
  and recalling \eqref{eq:estapsifi}
  \begin{equation*}
    \textstyle
    I_{f}\ge
    -
    (c(n)|x|^{\delta-1}
    +
    \frac{\epsilon}{\sqrt{\lambda}}|x|^{\delta})
    |f\overline{v}|
    -|a-I| |x|^{\delta} |\nabla^{b}v||f|
    -
    2|x|^{\delta}|\nabla^{b}_{S}v| |f|.
  \end{equation*}
  The integral of the first term is estimated by Cauchy-Schwartz
  \begin{equation*}
    \textstyle
    \int|x|^{\delta-1}|f\overline{v}|
    \le
    \alpha \delta\int|x|^{\delta-3}|v|^{2}+
    \frac{1}{\alpha \delta}\int|x|^{\delta+1}|f|^{2}
  \end{equation*}
  and  then by Hardy's inequality
  \begin{equation*}
    \textstyle
    \alpha \delta\int|x|^{\delta-3}|v|^{2}
    \le
    \frac{4 \alpha \delta}{(n-3+\delta)^{2}}
    \int|x|^{\delta-1}|\nabla_{S}^{b}v|^{2}
    \le \sigma\int I_{S},
    \qquad
    4\alpha=\sigma(n-3+\delta)^{2}\nu
  \end{equation*}
  and we conclude
  \begin{equation*}
    \textstyle
    \int|x|^{\delta-1}|f\overline{v}|
    \le
    \sigma\int I_{S}
    +
    c(n,\delta)\int|x|^{\delta+1}|f|^{2}.
  \end{equation*}
  For the second term we use the condition
  $\epsilon\le \lambda$ and we obtain
  \begin{equation*}
    \textstyle
    \frac{\epsilon}{\sqrt{\lambda}}
    \int
    |x|^{\delta}|f\overline{v}|
    \le
    \epsilon\int|v|^{2}+\int|x|^{2 \delta}|f|^{2}
    \le
    \int|f\overline{v}|+\int|x|^{2 \delta}|f|^{2}
  \end{equation*}
  Next we have
  \begin{equation*}
    \textstyle
    \int
    |a-I| |x|^{\delta} |\nabla^{b}v||f|
    \le
    \||x|^{\delta}(a-I)\|_{L^{\infty}}
    \|\nabla^{b}v\|_{\dot Y}\|f\|_{\dot Y^{*}}.
  \end{equation*}
  The integral of remaining term can be estimated as follows:
  \begin{equation*}
    \textstyle
    \int |x|^{\delta}|\nabla^{b}_{S}v| |f|\le
    \sigma \delta\nu
    \int|x|^{\delta-1}|\nabla_{S}^{b}v|^{2}
    + \frac{1}{\sigma \delta\nu}
      \int|x|^{\delta+1}|f|^{2}
    \le
    \sigma\int I_{S}
    + \frac{1}{\sigma \delta\nu}
      \int|x|^{\delta+1}|f|^{2}.
  \end{equation*}
 Summing up, we have proved
  \begin{equation}\label{eq:quinta}
    \textstyle
    \int_{\Omega}I_{f}
    \ge
    -2\sigma\int I_{S}
    -c\int(|x|^{\delta+1}+|x|^{2 \delta})|f|^{2}
    -\int|f\overline{v}|
    -     
    K
    \|\nabla^{b}v\|_{\dot Y}\|f\|_{\dot Y^{*}}
  \end{equation}
  for some $c=c(n,\sigma,\delta)$.

  We collect \eqref{eq:prima}, \eqref{eq:seconda}, \eqref{eq:terza},
  \eqref{eq:quarta} and \eqref{eq:quinta} to obtain
  \begin{equation*}
  \begin{split}
    \textstyle
    (1-7 \sigma)\int 
    &
    I_{S}+
    \textstyle
    (1-\delta)\||x|^{\frac{\delta-1}{2}}
    (a \nabla^{b}v)_{T}\|_{L^{2}}^{2}
    \le
    \textstyle
    c(n,\delta)
    \int(|x|^{\delta+1}+|x|^{2 \delta})|f|^{2}+
    \\
    &
    \textstyle
    +c(n,\delta)(1+K)
    ((1+\lambda)\|v\|_{\dot Y}^{2}+\|\nabla^{b} v\|_{\dot Y}^{2}
      + \lambda^{-1}  \|f\|_{\dot Y^{*}}^{2}).
  \end{split}
  \end{equation*}
  We now choose $\sigma=1/10$ so that $1-7 \sigma>0$. Moreover,
  in the case $\delta<1$ we have easily
  \begin{equation*}
    \textstyle
    \int(|x|^{\delta+1}+|x|^{2 \delta})|f|^{2}+
    \|f\|_{\dot Y^{*}}^{2}
    \lesssim
    \int|x|^{\delta}\bra{x}|f|^{2}
  \end{equation*}
  and this gives \eqref{eq:sommerest1b},
  while for $\delta=1$
  we leave the two norms of $f$ separate, and we obtain 
  \eqref{eq:sommerest2b}.
\end{proof}

\section{Proof of Theorem \ref{the:existence}} 
\label{sec:conclusionprf}

We first prove that the only solution satisfying the 
Sommerfeld condition is 0.

\begin{corollary}[Uniqueness]\label{cor:uniq}
  Assume (A) holds,
  $\mu<\mu_{0}(n)$ and $\lambda\ge c_{0}(n)(Z+Z^{2})$.
  Let $v\in H^{1}_{loc}(\Omega)$ with $v\vert_{\partial \Omega}=0$
  be a solution of
  \begin{equation*}
    (L+\lambda )v=0
  \end{equation*}
  satisfying the Sommerfeld radiation condition
  \begin{equation}\label{eq:somm}
    \liminf_{R\to \infty}
    \int_{|x|=R}
    \bigl|\nabla^{b}v-i \sqrt{\lambda}\widehat{x}v\bigr|^{2}dS
    =0.
  \end{equation}
  Then $v \equiv0$.
  If in particular
  \begin{equation}\label{eq:deltawei}
    \int_{|x|\gg1}|x|^{\delta-1}\bigl|\nabla^{b}v-i 
    \sqrt{\lambda}\widehat{x}v\bigr|^{2}dx<\infty
  \end{equation}
  for some $\delta>0$, then \eqref{eq:somm} is satisfied and the
  same conclusion holds.
\end{corollary}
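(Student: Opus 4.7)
The first assertion of the corollary (the ``in particular'' part) is elementary: setting $g(\rho)=\int_{|x|=\rho}|\nabla^{b}v-i\sqrt{\lambda}\widehat{x}v|^{2}dS$, condition \eqref{eq:deltawei} gives $\int_{1}^{\infty}g(\rho)\rho^{\delta-1}d\rho<\infty$, and since $\rho^{\delta-1}$ is not integrable at infinity when $\delta>0$, we must have $\liminf_{\rho\to\infty}g(\rho)=0$, which is exactly \eqref{eq:somm}.

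For the main claim the plan is to verify hypothesis \eqref{eq:condinf} of Theorem \ref{the:smoo} for $v$ and then apply that theorem with $f=0$ to conclude $\|v\|_{\dot X}=0$. By interior elliptic regularity (the coefficients of $L$ are smooth enough far from $\partial\Omega$ under Assumption (A)), $v\in H^{2}_{loc}(\Omega)$, so $v$ and $\nabla^{b}v$ have well-defined $L^{2}$ traces on spheres $|x|=R$ for almost every $R$.

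The key step is a boundary energy-flux identity. Multiplying $(L+\lambda)v=0$ by $\overline{v}$, integrating over $\Omega_{\le R}$, and applying Green's formula (the $\partial\Omega$-contribution vanishes by Dirichlet), one obtains
\begin{equation*}
  -\int_{\Omega_{\le R}}a(\nabla^{b}v,\nabla^{b}v)+\int_{\Omega_{\le R}}(c+\lambda)|v|^{2}+\int_{|x|=R}\widehat{x}\cdot a\nabla^{b}v\,\overline{v}\,dS=0.
\end{equation*}
The bulk integrals are real, so the imaginary part reduces to $\Im\int_{|x|=R}\widehat{x}\cdot a\nabla^{b}v\,\overline{v}\,dS=0$. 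Substituting $a\nabla^{b}v=\nabla^{b}v+(a-I)\nabla^{b}v$ and $\widehat{x}\cdot\nabla^{b}v=\widehat{x}\cdot(\nabla^{b}v-i\sqrt{\lambda}\widehat{x}v)+i\sqrt{\lambda}v$ extracts the cross term $\sqrt{\lambda}|v|^{2}$, yielding
\begin{equation*}
  \sqrt{\lambda}\int_{|x|=R}|v|^{2}dS=-\Im\int_{|x|=R}\widehat{x}\cdot(\nabla^{b}v-i\sqrt{\lambda}\widehat{x}v)\,\overline{v}\,dS-\Im\int_{|x|=R}\widehat{x}\cdot(a-I)\nabla^{b}v\,\overline{v}\,dS.
\end{equation*}
Bounding the right-hand side by Cauchy--Schwarz, using the pointwise inequality $|\nabla^{b}v|^{2}\le 2|\nabla^{b}v-i\sqrt{\lambda}\widehat{x}v|^{2}+2\lambda|v|^{2}$, and absorbing a small multiple of $\sqrt{\lambda}\|v\|_{L^{2}(|x|=R)}^{2}$ on the left---which is possible because Assumption (A) with $\mu<\mu_{0}(n)$ forces $\|a-I\|_{L^{\infty}}\ll 1$---we arrive at the trace inequality
\begin{equation*}
  \sqrt{\lambda}\,\|v\|_{L^{2}(|x|=R)}\lesssim\|\nabla^{b}v-i\sqrt{\lambda}\widehat{x}v\|_{L^{2}(|x|=R)}\qquad\text{for a.e.\ }R.
\end{equation*}

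Finally, taking $R=R_{k}$ along a Sommerfeld sequence furnished by \eqref{eq:somm}, the right-hand side tends to zero, hence $\|v\|_{L^{2}(|x|=R_{k})}\to0$, and the same pointwise inequality $|\nabla^{b}v|^{2}\le 2|\nabla^{b}v-i\sqrt{\lambda}\widehat{x}v|^{2}+2\lambda|v|^{2}$ then yields $\|\nabla^{b}v\|_{L^{2}(|x|=R_{k})}\to0$. This verifies \eqref{eq:condinf}, so Theorem \ref{the:smoo} applied with $f=0$ gives $\|v\|_{\dot X}\le c(n)\|f\|_{\dot Y^{*}}=0$ and $v\equiv0$. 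The only nontrivial point is the bridge between the Sommerfeld condition (which mixes $\nabla^{b}v$ and $v$) and the separate sphere averages required by \eqref{eq:condinf}; this is precisely the role of the imaginary part of the boundary flux, whose cross term is $\sqrt{\lambda}|v|^{2}$.
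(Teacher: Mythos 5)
Your proof is correct and follows the same strategy as the paper's: use the vanishing of the imaginary part of the boundary energy flux on spheres $\{|x|=R\}$ to convert the mixed Sommerfeld condition \eqref{eq:somm} into the hypothesis \eqref{eq:condinf} of Theorem~\ref{the:smoo}, then apply that theorem with $f=0$. The treatment of the ``in particular'' part also matches (integrability of $|x|^{\delta-1}g(|x|)$ forces $\liminf g = 0$).

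There is one respect in which your write-up is actually more careful than the paper's. The flux identity one obtains from integrating $\Im\,\partial_j\{a_{jk}\partial^b_k v\,\overline{v}\}=0$ is
\begin{equation*}
  \int_{|x|=R}\Im\bigl(\overline{v}\,\widehat{x}\cdot a\nabla^{b}v\bigr)\,dS=0,
\end{equation*}
with the metric $a$ present, whereas the paper writes the flux without $a$ and then claims the exact algebraic identity
\begin{equation*}
  \int_{|x|=R}\bigl(|\nabla^{b}v|^{2}+\lambda|v|^{2}\bigr)dS
  =\int_{|x|=R}\bigl|\nabla^{b}v-i\sqrt{\lambda}\widehat{x}v\bigr|^{2}dS,
\end{equation*}
which only holds when $a=I$. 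You spotted this and explicitly decomposed $a\nabla^{b}v=\nabla^{b}v+(a-I)\nabla^{b}v$, then absorbed the $(a-I)$ correction via Cauchy--Schwarz and the smallness $\|a-I\|_{L^{\infty}}\ll1$ guaranteed by $\mu<\mu_{0}(n)$. That absorption argument is sound, and the resulting trace inequality $\sqrt{\lambda}\,\|v\|_{L^{2}(|x|=R)}\lesssim\|\nabla^{b}_{S}v\|_{L^{2}(|x|=R)}$, combined with the pointwise bound $|\nabla^{b}v|^{2}\le2|\nabla^{b}_{S}v|^{2}+2\lambda|v|^{2}$, correctly delivers \eqref{eq:condinf} along the Sommerfeld sequence. (Both your argument and the paper's implicitly use $\lambda>0$ to pass from $\lambda|v|^2$ to $|v|^2$ in \eqref{eq:condinf}; this is a shared implicit assumption, not a gap specific to your version.)
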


\begin{proof}[Proof of the Corollary]
  By the assumptions on $L$ we have $v\in H^{2}_{loc}$. 
  Moreover, multiplying the equation by $\overline{v}$ and taking the
  imaginary part we obtain the identity
  \begin{equation*}
    \Im
    \partial_{j}\{a_{jk}\partial^{b}_{k}v \overline{v}\} = 0
  \end{equation*}
  and integrating on $\Omega \cap\{|x|<R\}$, thanks to the Dirichlet
  boundary conditions we get, for $R$ large enough,
  \begin{equation*}
    \textstyle
    \int_{|x|=R}
    \Im(\overline{v}\widehat{x}\cdot \nabla^{b}v )dS=0.
  \end{equation*}
  This implies
  \begin{equation*}
    \textstyle
    \int_{|x|=R}(|\nabla^{b}v|^{2}+\lambda|v|^{2})dS=
    \int_{|x|=R}
    \bigl|\nabla^{b}v-i \sqrt{\lambda}\widehat{x}v\bigr|^{2}dS
  \end{equation*}
  and hence condition \eqref{eq:condinf} is satisfied. Then applying
  the previous estimate with $f=0$, $\epsilon=0$, we obtain 
  that $v \equiv0$. The last claim is proved by contradiction:
  if 
  $\int_{|x|=R}
    |\nabla^{b}v-i \sqrt{\lambda}\widehat{x}v|^{2}dS\ge \sigma$
  for some constant $\sigma>0$, then multiplying by $|x|^{\delta-1}$
  and integrating in the radial variable we obtain that the
  quantity \eqref{eq:deltawei} can not be finite.
\end{proof}

\begin{lemma}[]\label{lem:localbound}
  Assume (A), with $\mu,\lambda$ arbitrary, and let
  \begin{equation*}
    \Gamma=
    \|a-I\|_{L^{\infty}}
    +
    \||x|^{2}c_{-}\|_{L^{\infty}(|x|\le2)}.
  \end{equation*}
  Let $v\in H^{2}_{loc}(\Omega)$ with $v\vert_{\partial \Omega}=0$,
  $\lambda,\epsilon\in \mathbb{R}$ 
  and let $f=(L+\lambda+i \epsilon)v$.
  Then, if $\Gamma$ is sufficienty small with respect to $n$,
  for all $R>0$ we have
  \begin{equation}\label{eq:localbound}
    \int_{\Omega\cap\{|x|\le R\}}
    |\nabla^{b}v|^{2}
    \le
    C
    \int_{\Omega\cap\{|x|\le R+1\}}|v|^{2}
    +
    \int_{\Omega\cap\{|x|\le R+1\}}|f|^{2}
  \end{equation}
  where $C=c(n)(1+\lambda_{+}+\|c_{-}\|_{L^{\infty}(|x|\ge1)})$.
\end{lemma}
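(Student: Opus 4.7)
The plan is a classical Caccioppoli-type local energy estimate, obtained by testing the equation against $\bar v\phi^2$ where $\phi\in C^\infty_c(\mathbb{R}^n)$ is a smooth cutoff with $\phi\equiv1$ on $\{|x|\le R\}$, $\spt\phi\subseteq\{|x|\le R+1\}$, and $\|\nabla\phi\|_{L^\infty}\le 2$ uniformly in $R$. Multiplying $(L+\lambda+i\epsilon)v=f$ by $\bar v\phi^2$, integrating over $\Omega$ (with $v\vert_{\partial\Omega}=0$ and $\phi$ compactly supported, so the integration by parts $\int\partial_j^b(a_{jk}\partial_k^bv)\,\bar v\phi^2=-\int a_{jk}\partial_k^bv\,\overline{\partial_j^b(v\phi^2)}$ produces no boundary contributions), and taking the real part gives
\begin{equation*}
  \int_\Omega a(\nabla^bv,\nabla^bv)\phi^2 =-2\Re\int_\Omega\phi\bar v(a\nabla^bv)\cdot\nabla\phi+\int_\Omega c|v|^2\phi^2+\lambda\int_\Omega|v|^2\phi^2-\Re\int_\Omega f\bar v\phi^2.
\end{equation*}

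The mixed gradient term is bounded via Cauchy--Schwartz with a small parameter by $\tfrac14\int a(\nabla^bv,\nabla^bv)\phi^2+C\|a\|_{L^\infty}^2\int_{|x|\le R+1}|v|^2$, whose first summand is absorbed on the left. Similarly $|\Re\int f\bar v\phi^2|\le\tfrac12\int_{|x|\le R+1}(|v|^2+|f|^2)$ and $\lambda\int|v|^2\phi^2\le\lambda_+\int_{|x|\le R+1}|v|^2$. Since $\|a-I\|_{L^\infty}\le\Gamma$ is small, ellipticity gives $a(\nabla^bv,\nabla^bv)\ge\tfrac12|\nabla^bv|^2$, so after these absorptions the left-hand side controls $\tfrac14\int|\nabla^bv|^2\phi^2$, and the only term still to handle is $\int_\Omega c|v|^2\phi^2$.

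For the potential term I split the support of $\phi$ via a subordinate partition of unity into $\{|x|\le 2\}$ and $\{1\le|x|\le R+1\}$. On the exterior annulus, the $L^\infty$ bound $\|c_-\|_{L^\infty(|x|\ge 1)}$ directly controls the negative contribution, while the positive part is handled by exploiting the structural decomposition $c=c_I+c_{II}+c_{III}+c_{IV}$ from (A): the pieces $c_{II},c_{IV}$ are bounded away from the origin by their dyadic $\ell^1L^\infty$ norms, and the one-sided radial control $[\partial_r(rc_I)]_+,[\partial_r(rc_{III})]_+\lesssim|x|^{-2}$ from (A) integrates along rays to yield a uniform bound on the positive parts of $c_I,c_{III}$ on $\{|x|\ge 1\}$. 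Near the origin, $c_-\le\Gamma/|x|^2$ by the definition of $\Gamma$, and the magnetic Hardy inequality \eqref{eq:maghardy} applied to $v\phi\in H^1_0(\Omega)$ yields $\int_{|x|\le 2}c_-|v|^2\phi^2\le\tfrac{4\Gamma}{(n-2)^2}\int|\nabla^b(v\phi)|^2$, which for $\Gamma$ small enough relative to $n$ is absorbed into the gradient term on the left (after distributing the derivative via $\nabla^b(v\phi)=\phi\nabla^bv+v\nabla\phi$ and paying a harmless $C\int|v|^2|\nabla\phi|^2$).

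The hard part is the careful bookkeeping required to ensure the final constant depends only on $n$, $\lambda_+$, and $\|c_-\|_{L^\infty(|x|\ge 1)}$: in particular one must use the one-sided radial hypotheses on $\partial_r(rc_I),\partial_r(rc_{III})$ and the smallness of $\Gamma$ to dominate the critical Hardy weight at the origin, and it is precisely this balance that forces the dimensional constraint $n\ge 3$ and the smallness of $\Gamma$ with respect to $n$ in the statement.
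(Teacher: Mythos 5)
Your overall strategy (test the equation against a cutoff $\phi^2\bar v$, integrate by parts, absorb the mixed gradient term, and control the singular potential near the origin via the magnetic Hardy inequality applied to $v\phi$) coincides with the paper's, which uses the equivalent multiplier $\psi\,\overline{\psi v}$. However, the sign you carry through on the potential term is the opposite of the one the proof actually needs, and this opens a genuine gap.

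Taking the operator literally as $Lv=\nabla^{b}\cdot(a\nabla^{b}v)+cv$ from \eqref{eq:opL}, your identity is correct and produces $+\int_{\Omega}c|v|^{2}\phi^{2}$ on the right-hand side, so closing the estimate would require an \emph{upper} bound on $c_{+}$ on the support of $\phi$. But Assumption (A) supplies nothing of the kind: it only controls the negative parts $c_{I,-}$, $c_{II,-}$, $c_{III,-}$, $c_{IV,-}$ and the one-sided quantity $[\partial_{r}(rc_{I})]_{+}$. Your argument that this ``integrates along rays to give a uniform bound on $c_{I,+}$'' does not work: from $\partial_{r}(rc_{I})\le\mu r^{-2}$ one only gets $rc_{I}(r)\le r_{0}c_{I}(r_{0})+O(\mu)$, and nothing in (A) controls $c_{I}(r_{0})$. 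Concretely, $c_{I}=M/|x|$ with $a\equiv I$ and $b\equiv0$ satisfies all of (A$_{0}$) for every $M>0$ (the $|a-I|$ factors are zero, $c_{I,-}=0$, $\partial_{r}(rc_{I})=0$), yet $\|c_{I,+}\|_{L^{\infty}(|x|\ge1)}=M$ is unbounded; the constant $c(n)(1+\lambda_{+}+\|c_{-}\|_{L^{\infty}(|x|\ge1)})$ in the lemma does not see $M$ at all, so the estimate cannot be proved this way.

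The resolution is that throughout the analytical part of the paper the sign of $c$ is flipped relative to \eqref{eq:opL}: the multiplier framework is built on $f=A^{b}v-cv+(\lambda+i\epsilon)v$ (eq.~\eqref{eq:deff}), and the identity derived in the proof of this lemma indeed reads $(\lambda+i\epsilon-c)|\psi v|^{2}$. With that sign the right-hand side carries $-\int_{\Omega}c|\psi v|^{2}\le\int_{\Omega}c_{-}|\psi v|^{2}$, and one only needs $c_{-}$: away from the origin $\|c_{-}\|_{L^{\infty}(|x|\ge1)}$ suffices, near the origin $\||x|^{2}c_{-}\|_{L^{\infty}(|x|\le2)}$ combined with the Hardy inequality and absorbed for $\Gamma$ small. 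That is precisely the bookkeeping reflected in $\Gamma$ and in the constant $C$ of the statement. So: flip the sign of $c$, drop the entire paragraph about bounding the positive part via the structural decomposition (it is both unsound and unnecessary), and the rest of your argument matches the paper's proof.
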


\begin{proof}
  For any real valued test function $\psi$ we can write
  \begin{equation*}
    (L+\lambda+ i\varepsilon)(\psi v)
    =
    \psi f+(A\psi) v+2a(\nabla^bv,\nabla\psi)
  \end{equation*}
  and multiplying by 
  $\psi \overline{v}$ and rearranging the terms we get
  \begin{equation*}
  \begin{split}
    \partial_{j}\{\psi \overline{v} a_{j k}\partial_{k}^{b}(\psi v)\}
    -
    a(\nabla^{b}(\psi v),\nabla^{b}(\psi v))
    +
    &
    (\lambda+i \epsilon-c)|\psi v|^{2}
    =
    \\
    =
    &
    f\psi^{2}\overline{v}
    +
    (A \psi)\psi|v|^{2}
    +2a(\nabla^{b}v,\nabla \psi)\psi \overline{v}.
  \end{split}
  \end{equation*}
  Now we take the real part and use the fact that
  \begin{equation*}
  \begin{split}
    2\Re a(\nabla^{b}v,\nabla \psi)\psi \overline{v}
    =
    2\Re a(\nabla v,\nabla \psi)\psi \overline{v}
    =&
    \textstyle
    -\frac12 a(\nabla|\psi|^{2},\nabla|v|^{2})
    \\
    =&
    \textstyle
    -\frac12(A|\psi|^{2})|v|^{2}
    -
    \partial_{j}\{\frac12a_{j k}|v|^{2} \partial_{k}|\psi|^{2}\}
  \end{split}
  \end{equation*}
  and we obtain
  \begin{equation*}
  \begin{split}
    \textstyle
    \partial_{j}\{\Re\psi \overline{v} a_{j k}\partial_{k}^{b}(\psi v)
        +\frac12a_{j k}|v|^{2} \partial_{k}|\psi|^{2}\}
    =
    a(\nabla^{b} & (\psi v),\nabla^{b}(\psi v))
    +
    (c-\lambda)|\psi v|^{2}
    +
    \\
    +
    &
    \textstyle
    \Re f\psi^{2}\overline{v}
    +
    (A \psi)\psi|v|^{2}
    -\frac12(A|\psi|^{2})|v|^{2}.
  \end{split}
  \end{equation*}
  Integrating on $\Omega$ and using
  $A|\psi|^{2}=2 \psi A \psi+2a(\nabla \psi,\nabla \psi)$
  and the Dirichlet boundary conditions, we arrive at
  \begin{equation}\label{eq:lastid}
    \textstyle
    \int_{\Omega}
    a(\nabla^{b} (\psi v),\nabla^{b}(\psi v))
    =
    \int_{\Omega}
    (\lambda-c)|\psi v|^{2}
    -
    \int_{\Omega}
    \Re f \psi^{2}\overline{v}
    +
    \int_{\Omega}
    a(\nabla \psi, \nabla \psi)|v|^{2}.
  \end{equation}
  It is clear that this identity holds for any compactly supported,
  piecewise $C^{1}$ weight function $\psi$.

  We introduce now a cutoff function $\chi$ equal to 1 in $|x|\le1$,
  equal to 0 for $|x|\ge2$, and such that $0\le \chi\le 1$.
  Then we can write
  \begin{equation*}
    \textstyle
    -\int c|\psi v|^{2}
    \le
    \int (1-\chi) c_{-}|\psi v|^{2}
    +
    \int \chi c_{-}|\psi v|^{2}.
  \end{equation*}
  We estimate the first term simply as follows:
  \begin{equation*}
    \textstyle
    \int_{\Omega} (1-\chi) c_{-}|\psi v|^{2}
    \le
    \|c_{-}\|_{L^{\infty}(|x|\ge1)}
    \int_{\Omega}|\psi v|^{2}.
  \end{equation*}
  On the other hand, for the second term we use the magnetic
  Hardy inequality:
  \begin{equation*}
    \textstyle
    \int_{\Omega} \chi c_{-}|\psi v|^{2}
    \le
    \||x|^{2}c_{-}\|_{L^{\infty}(|x|\le2)}
    \int_{\Omega}|x|^{-2}|\psi v|^{2}
    \le
    c(n)\Gamma \int_{\Omega}|\nabla^{b}(\psi v)|^{2}.
  \end{equation*}
  Since $a\ge(1-\Gamma)I$, if $\Gamma$ is sufficiently small
  with respect to $n$ we can absorb the last term at the left hand
  side of \eqref{eq:lastid} and we obtain the estimate
  \begin{equation*}
    \textstyle
    \int_{\Omega}|\nabla^{b}(\psi v)|^{2}
    \le
    c(n)(1+\lambda_{+}+\|c_{-}\|_{L^{\infty}(|x|\ge1)})
    \int_{\Omega}|\psi v|^{2}+
    \int_{\Omega}a(\nabla \psi,\nabla \psi)|v|^{2}+
    \int_{\Omega}|\psi f|^{2}.
  \end{equation*}
  Finally, we choose $\psi$ as follows: for a given $R>0$,
  \begin{equation*}
    \psi=1
    \quad\text{if}\quad |x|\le R,
    \qquad
    \psi=0
    \quad\text{if}\quad |x|\ge R+1,
    \qquad
    \psi=R+1-|x|
    \quad\text{elsewhere.}
  \end{equation*}
  Plugging $\psi$ in the previous estimate we obtain the claim.
\end{proof}

We are ready to conclude the proof of Theorem \ref{the:existence}.
Given $f$ with $\int|x|^{\delta}\bra{x}|f|^{2}<\infty$,
we consider a sequence
$\epsilon_{k}>0$ with $\epsilon_{k}\to0$ and define 
$v_{k}$ as the
unique solution $v_{k}\in H^{1}_{0}(\Omega)\cap H^{2}(\Omega)$ of
\begin{equation*}
  (L+\lambda+i \epsilon_{k})v_{k}=f.
\end{equation*}
We now remark that under the assumptions of
Theorem \ref{the:existence}, if $\kappa$ is sufficiently small,
all the conditions in
both Theorems \ref{the:smoo}
and \ref{the:sommlarge}
are satisfied.
Then, introducing the norm
\begin{equation*}
  \textstyle
  \|w\|_{\dot Z}
  :=
  \|w\|_{\dot X}+|\lambda|\|w\|_{\dot Y}+
  \|\nabla^{b}w\|_{\dot Y}
  +(n-3)\left\|\frac{|w|^{2}}{|x|^{3/2}}\right\|
  +
  (\int|x|^{\delta-1}|\nabla^{b}_{S}v|^{2}dx)^{1/2},
\end{equation*}
we get the bound
(uniform in $|\epsilon|<\lambda$ for fixed $\lambda$)
\begin{equation}\label{eq:unifbd}
  \textstyle
  \|v_{k}\|_{\dot Z}^{2}
  \lesssim
  \int|x|^{\delta}\bra{x}|f|^{2}
\end{equation}
since the last norm controls $\|f\|_{\dot Y^{*}}$.
Note on the other hand that the smoothing estimate
\begin{equation}\label{eq:unifsmoo}
  \textstyle
  \|v_{k}\|_{\dot X}
  +
  |\lambda|^{1/2}\|v_{k}\|_{\dot Y}
  +
  \|\nabla^{b}v_{k}\|_{\dot Y}+
  \|(a \nabla^{b}v_{k})_{T}\|_{L^{2}}+
  (n-3)
  \left\|\frac{v_{k}}{|x|^{3/2}}\right\|_{L^{2}}
  \le
  c(n)
  \|f\|_{\dot Y^{*}}
\end{equation}
is uniform for all $\lambda> \overline{\sigma}\cdot(K+K^{2})$ 
and all $\epsilon$.

From \eqref{eq:unifbd} we deduce that $v_{k}$
is a bounded sequence in $H^{1}(\Omega\cap\{|x|<R\})$ for all $R>0$;
by a diagonal procedure and the compact embedding of 
$H^{1}$ into $L^{2}$
we can extract a subsequence, which we denote
again by $v_{k}$, strongly convergent in 
$L^{2}(\Omega\cap\{|x|<R\})$ for all $R>0$.
Moreover, the difference $v_{k}-v_{h}$ of two solutions satisfies
the equation
\begin{equation*}
  (L+\lambda+i \epsilon_{k})(v_{k}-v_{h})=
  (\epsilon_{k}-\epsilon_{h})v_{h},
\end{equation*}
hence by Lemma \ref{lem:localbound} we see that $v_{k}$ is
a Cauchy sequence in $H^{1}(\Omega\cap\{|x|<R\})$,
and in conclusion $v_{k}$ converges strongly in
$H^{1}(\Omega\cap\{|x|<R\})$ for all $R>0$ to a limit $v$.
Clearly $v\in H^{1}_{loc}(\Omega)$, $v\vert_{\partial\Omega}=0$, and $v$
is a solution of
\begin{equation*}
  (L+\lambda)v=f.
\end{equation*}
We note that by \eqref{eq:unifbd} the sequence $v_{k}$ is bounded
in $\dot Z$ which is the dual of a separable space, hence
it admits a weakly-* convergent subsequence whose limit
satisfies the same bound. This means that $v\in \dot Z$ with
\begin{equation*}
  \textstyle
  \|v\|_{\dot Z}^{2}
  \lesssim
  \int|x|^{\delta}\bra{x}|f|^{2},
\end{equation*}
and that $v$ satisfies also the smoothing estimate
\eqref{eq:unifsmoo}.

Finally, if we apply the same procedure to any
subsequence of the original sequence,
we can extract from it a subsequence which converges in
$H^{1}_{loc}$ strongly and in $\dot Z$ weakly-* to a solution
$\widetilde{v}$ of the Helmholtz equation satisfying the same bounds,
and by Corollary \ref{cor:uniq} we must have $\widetilde{v}=v$.
This implies that the entire original sequence converges to
$v$ both in 
$H^{1}_{loc}$ strongly and in $\dot Z$ weakly-*,
and the proof is concluded.

\section{Acknowledgments}
The third author is supported by the ERC Starting Grant 676675 FLIRT.


\end{document}